\newlength\mytemplen
\newsavebox\mytempbox
\definecolor{myblue}{rgb}{.97,.97,1}
\newcommand\mybluebox{%
    \@ifnextchar[
       {\@mybluebox}%
       {\@mybluebox[0pt]}}
\def\@mybluebox[#1]{%
    \@ifnextchar[
       {\@@mybluebox[#1]}%
       {\@@mybluebox[#1][0pt]}}
\def\@@mybluebox[#1][#2]#3{
    \sbox\mytempbox{#3}%
    \mytemplen\ht\mytempbox
    \advance\mytemplen #1\relax
    \ht\mytempbox\mytemplen
    \mytemplen\dp\mytempbox
    \advance\mytemplen #2\relax
    \dp\mytempbox\mytemplen
    \colorbox{myblue}{\hspace{1em}\usebox{\mytempbox}\hspace{1em}}}
\def\CP{\mathcal{P}}
\def\frakg{\ensuremath{\mathfrak{g}}}
\def\frakh{\ensuremath{\mathfrak{h}}}
\def\frakp{\ensuremath{\mathfrak{p}}}
\def\frakU{\ensuremath{\mathfrak{U}}}
\newcommand{\bC}{\ensuremath{\mathbb{C}}}
\newcommand{\bZ}{\ensuremath{\mathbb{Z}}}
\newcommand{\bN}{\ensuremath{\mathbb{N}}}
\newcommand{\beq}{\begin{equation*}\begin{aligned}}
\newcommand{\eeq}{\end{aligned}\end{equation*}}
\newcommand{\beqn}{\begin{equation}\begin{aligned}}
\newcommand{\eeqn}{\end{aligned}\end{equation}}
\newcommand{\bDelta}{\overline{\Delta}}
\newcommand{\tlambda}{\tilde{\lambda}}
\newcommand{\bfp}{\CP}
\newcommand{\bfP}{\mathsf{P}}
\newcommand{\fl}{\mathfrak{l}}
\newcommand{\fn}{\mathfrak{n}}
\newcommand{\te}{\tilde{e}}
\newcommand{\sub}[1]{\bar{#1}}
\newtheorem{lem}{Lemma}
\newtheorem{cor}{Corollary}
\newtheorem{rem}{Remark}
\newtheorem{thm}{Theorem}
\newtheorem{prop}{Proposition}
\begin{document}
\begin{CJK*}{UTF8}{min}

\title[Determinant for Parabolic Verma Modules of Lie Superalgebras]{Determinant Formula for Parabolic Verma Modules of Lie Superalgebras}

\author[Yoshiki Oshima]{Yoshiki Oshima (大島芳樹)}
\address{(Y.\ O.) Kavli IPMU (WPI), University of Tokyo, Kashiwa, Chiba 277-8583, Japan}
\email{yoshiki.oshima(at)ipmu.jp}

\author[Masahito Yamazaki]{Masahito Yamazaki (山崎雅人)}
\address{(M.\ Y.) Kavli IPMU (WPI), University of Tokyo, Kashiwa, Chiba 277-8583, Japan;
School of Natural Sciences, Institute for Advanced Study, Princeton NJ 08540, USA}
\email{masahito.yamazaki(at)ipmu.jp}
\urladdr{http://member.ipmu.jp/masahito.yamazaki/}

\begin{abstract}
We prove a determinant formula
for a parabolic Verma module of a Lie superalgebra, previously conjectured by the second author.
Our determinant formula generalizes the previous results of Jantzen for a parabolic Verma module of a (non-super) Lie algebra,
and of Kac concerning a (non-parabolic) Verma module for a Lie superalgebra.
The resulting formula is expected 
to have a variety of applications in the study of higher-dimensional supersymmetric conformal field theories.
We also discuss irreducibility criteria for the Verma module.
\end{abstract}

\maketitle 
\end{CJK*}

\tableofcontents


\section{Introduction}\label{sec.intro}

The study of Verma modules \cite{Verma} is a rich subject in the representation theory of Lie algebras and their universal enveloping algebras (see e.g.\ \cite{Diximier,Jantzen_LNM,Humphreys} and references therein).
Given a Verma module, one natural question is to ask precisely when the Verma module is irreducible/reducible, and if reducible 
to determine the composition factors of the module.

One useful approach to this problem is to consider the determinant of the contravariant form 
on the Verma module. 
For a Verma module associated with a finite-dimensional semisimple Lie algebra, Jantzen \cite{Jantzen_LNM} and Shapovalov \cite{Shapovalov} 
proved a general formula for this determinant.
Since then this result has been generalized in several different directions.
In one direction, Jantzen \cite{Jantzen} derived a similar determinant formula 
for a parabolic generalization of a Verma module\footnote{A parabolic Verma module is also called a generalized Verma module in the literature.}. 
This formula reduces to the Shapovalov formula \cite{Shapovalov} (see also \cite{Jantzen73})
when the parabolic subalgebra is chosen to be a Borel subalgebra. 
In another direction, Kac \cite{Kac_1986}
derived a determinant formula for a Verma module of a Lie superalgebra.
The natural generalization, then, is to consider the determinant formula
for a parabolic Verma module of
a Lie superalgebra. It seems that 
such a generalization has never been discussed in the literature.
The goal of this paper is to fill in this gap.

While this might sound like a small technical problem,
there have recently been strong motivations to study this generalization, 
coming from the physics of higher-dimensional (super) conformal field theories (CFTs) (see \cite{Rychkov:2016iqz,Simmons-Duffin:2016gjk} for recent reviews).
It has recently been pointed out \cite{Penedones:2015aga,Yamazaki:2016vqi}
 that the study of parabolic Verma modules, and in particular the determinant formula,
is an important ingredient in the study of higher-dimensional CFTs, for example
in the derivation of the unitarity bound \cite{Evans,Mack:1975je,Minwalla:1997ka,Ferrara:2000nu}  and in the the derivation of the recursion relations for conformal blocks \cite{Penedones:2015aga} (see also \cite{Kos:2013tga,Kos:2014bka,Iliesiu:2015akf,Costa:2016xah}). This is perhaps not too surprising when contrasted with the study of the 2d CFTs, where the Kac determinant formula \cite{Kac_LNP,Feigin_Fuks_1982} plays a prominent role in the representation theory of the Virasoro algebra.
In applications to higher-dimensional CFTs 
it is crucial to consider parabolic Verma modules, as opposed to Verma modules for Borel subalgebras.

In this paper we present a precise mathematical formulation and proof of the previously-conjectured \cite{Yamazaki:2016vqi} determinant formula. 
We will also work out some mathematical consequences of our main theorem, and in particular
derive irreducibility criteria for the parabolic Verma module,
which will be of great use in practical applications.
We include some preliminary comments on CFT applications, while more details will be left out for future project.

This paper is organized as follows. In section \ref{sec.formulation} we present our main theorem (Theorem \ref{thm.main}),
explaining the necessary ingredients along the way. The proof of Theorem \ref{thm.main} is given in section \ref{sec.proof}. In section \ref{sec.simplicity} we derive irreducibility criteria of the parabolic Verma modules,
see Theorem \ref{prop_1}, Theorem \ref{thm.simple}, as well as Proposition \ref{prop.M}, Proposition \ref{prop.Mp}
and Corollary \ref{cor.Msharp}.


\subsection*{Acknowledgments}
This research is supported by WPI program (MEXT, Japan). MY is also supported by JSPS Program for Advancing Strategic International Networks to Accelerate the Circulation of Talented Researchers, by JSPS KAKENHI Grant No.\ 15K17634, and by Institute for Advanced Study. 

\section{Statement of the Theorem}\label{sec.formulation}

In this section we state our main theorem (Theorem \ref{thm.main}) with a minimal set of definitions and notations.
The proof of the main theorem will be given in the next section.

In this paper $\mathbb{Z}, \mathbb{N}$ and $\mathbb{Q}$ denote the set of integers, non-negative integers and rational numbers, respectively.

\subsection{Lie Superalgebra}

Let us recall the basics of Lie superalgebras and fix some notation.
See e.g.\ \cite{KacAdvM,Frappat:1996pb,Musson} for details.

Let  $\mathfrak{g}$ be a finite-dimensional Lie superalgebra, whose even (odd) degree part we denote by $\frakg_{\sub{0}}$ ($\frakg_{\sub{1}}$). 
Throughout the paper we assume that our superalgebras are defined over $\mathbb{C}$;
it is possible to repeat a similar argument for a more general 
algebraically closed field of characteristic zero.

In this paper we assume that $\mathfrak{g}$ is a contragredient finite-dimensional Lie superalgebra with an indecomposable Cartan matrix (see \cite{KacAdvM,Musson} for definition).
Then $\frakg$ has a natural invariant bilinear form $(-,-)$.\footnote{Here a bilinear form $(-, -)$ on $\mathfrak{g}$ is invariant if
$([a,b],c)=(a, [b,c])$ for all $a, b, c \in \mathfrak{g}$.} 
Such superalgebras have been classified,
and are either a simple Lie algebra, or one of the following superalgebras;
\beq
&A(m,n) = \mathfrak{sl}(m+1| n+1) \;, \quad m>n\ge 0 \;,\\
&A(n,n) = \mathfrak{gl}(n+1| n+1) \;, \\
&B(m,n) = \mathfrak{osp}(2m+1| 2n) \;, \quad m\ge 0\;,\ n>0 \;, \\
&C(n) = \mathfrak{osp}(2| 2n-2) \;, \quad n\ge 2 \;, \\
&D(m,n) = \mathfrak{osp}(2m| 2n) \;, \quad m\ge2\;,\ n\ge 1 \;, \\
&D(2,1;\alpha) \quad \alpha \ne 0, 1\;, \quad
G(2)  \;, \quad F(4) \;. \\
\eeq
These superalgebras are simple except for $\mathfrak{gl}(n+1|n+1)$.  In the case of type $A(n,n)$ the Cartan matrix is degenerate and the corresponding simple Lie superalgebra is $\mathfrak{psl}(n+1|n+1)$.

As we will comment later in Remark \ref{rem.CFT},
all the superalgebras
corresponding to superconformal algebras in 
dimensions greater than two either appear in the above list or 
is $\mathfrak{psl}(n+1|n+1)$.

Let $\frakh$ be a Cartan subalgebra of $\frakg_{\sub{0}}$.
In our setting it is known that $\frakh$ 
is self-normalizing
 (see \cite[Section 8.3.1]{Musson}).
We have the root space decomposition 
\beq
\frakg = \frakh \oplus \bigoplus_{\alpha\in \frakh^*\setminus\{0\}}
 \frakg^{\alpha} \;,
\eeq
where $ \frakg^{\alpha}$ is the root space corresponding to $\alpha$, i.e.\
\beq
\frakg^{\alpha} := \{x\in \frakg \, |\,  [h,x] =  \alpha(h) (x) \textrm{ for all } h\in \frakh \} \;.
\eeq

\subsection{Root System and Parabolic Subalgebra}

Let us define the set of roots to be 
\begin{align*}
\Delta := \{ \alpha\in\frakh^* | \, \alpha\ne 0 \textrm{ and } \frakg^{\alpha}\ne 0 \}\;.
\end{align*}
Let us denote the set of even and odd roots by $\Delta_{\sub{0}}$
 and $\Delta_{\sub{1}}$, respectively:
\beq
\Delta= \Delta_{\sub{0}} \cup \Delta_{\sub{1}}\;.
\eeq
It is known that our assumption implies $\textrm{dim}\, \frakg^{\alpha}= 1$ for all roots $\alpha\in\Delta$.

Let us next choose a set of positive roots. An element $ h \in \frakh$ is called 
{\it regular} if the real part $\operatorname{Re}(\alpha(h))$ of $\alpha(h)$ is 
non-zero for all roots $\alpha$.
Any regular element $h\in \frakh$ determines a decomposition
\beq
\Delta = \Delta^{+} \cup \Delta^{-}\;,
\label{delta_pm}
\eeq
where 
\beq
\Delta^{\pm} := \{
 \alpha \in \Delta | \pm \operatorname{Re}(\alpha(h)) >0 
\} \;.
\eeq
Note that the decomposition \eqref{delta_pm}
depends on the choice of $h$. Similarly, the Borel subalgebra $\mathfrak{b}$
(as defined by \cite{PenkovSerganova94})
\beq
\mathfrak{b}:= \frakh\oplus \bigoplus_{\alpha\in \Delta^{+}} \frakg^{\alpha} 
\eeq
depends in general on the choice of $h$ (even up to automorphisms of $\frakg$).

Let $\Pi$ be the set of simple roots in $\Delta^{+}$, namely, $\Pi$ consists of the positive roots $\alpha\in \Delta^{+}$ which cannot be written as a sum of two positive roots.
The set of positive roots is decomposed into that of positive even roots 
$\Delta_{\sub{0}}^+$ and that of positive odd roots 
$\Delta_{\sub{1}}^+$: $\Delta^+=\Delta_{\sub{0}}^+ \cup \Delta_{\sub{1}}^+$. 
Let us define the root lattice $Q(\Delta)$ and its positive part $Q^+(\Delta)$ by
\beq
Q(\Delta):=\mathbb{Z} \Delta \subset \mathfrak{h}^* \;, \quad
Q^+(\Delta):=\mathbb{N} \Delta^+ \;.
\eeq
In the set of weights $\frakh^*$ we define an ordering by 
\beq
\lambda\geq \mu\quad \Longleftrightarrow\quad \lambda-\mu\in Q^+(\Delta) \;.
\eeq
Let us define the set $\overline{\Delta}_{\sub{0}}$ and 
the set of {\it isotropic} roots $\overline{\Delta}_{\sub{1}}$ by
\beq
\overline{\Delta}_{\sub{0}} :=\{
\alpha \in \Delta_{\sub{0}} |\, \alpha/2 \not \in \Delta_{\sub{1}}
\}\subset \Delta_{\sub{0}} \;,\quad
\overline{\Delta}_{\sub{1}} :=\{
\alpha \in \Delta_{\sub{1}} |\, 2\alpha \not \in \Delta_{\sub{0}}
\} \subset \Delta_{\sub{1}}\;.
\eeq
Also, write
\beq
\overline{\Delta}^+_{\sub{0}}=\overline{\Delta}_{\sub{0}}\cap \Delta^+\;,\quad
\overline{\Delta}^+_{\sub{1}}=\overline{\Delta}_{\sub{1}}\cap \Delta^+\;.
\eeq
It is known that an odd root $\alpha$ is isotopic
 (i.e.\ $\alpha\in \overline{\Delta}_{\sub{1}}$)
 if and only if $(\alpha, \alpha)=0$.
The set of non-isotropic roots is then defined to be
 the complement of $\bDelta_{\sub{1}}$:
\beq
\Delta_{\rm non\mathchar`-iso}:=
 \Delta_{\sub{0}} \, \cup \, 
 (\Delta_{\sub{1}} \setminus \overline{\Delta}_{\sub{1}}) \;.
\eeq

For a non-isotropic root $\alpha\in \Delta$ let us denote 
its Weyl reflection by $s_{\alpha}$ and define
 its coroot $\alpha^\vee\in\frakh$ by
\beq
\langle \lambda,\alpha^\vee\rangle=
 \frac{2(\lambda,\alpha)}{(\alpha, \alpha)} 
\eeq
for all $\lambda\in \frakh^*$. Here $\langle -, -\rangle$ denotes the natural pairing between $\frakh$ and $\frakh^*$, and $(-,-)$ is the bilinear form on
 $\frakh^*$ induced from that on $\frakh$ mentioned previously.
We also define $h_{\alpha}\in \frakh$ for $\alpha\in\frakh^*$ by 
\beqn\label{halpha_def}
\langle \lambda,h_{\alpha} \rangle=
(\lambda,\alpha)
\eeqn
for $\lambda\in \frakh^*$. 
Note that $[\frakg^{\alpha},\frakg^{-\alpha}]=\bC h_{\alpha}$.

We denote the Weyl vector by $\rho$:
\beq
\rho:=\frac{1}{2} \left(
\sum_{\alpha \in \Delta^+_{\sub{0}}} \alpha  
-
\sum_{\alpha \in \Delta^+_{\sub{1}} }\alpha  
\right) \ .
\eeq

\subsection{Parabolic Verma Modules}

Let us choose a subset $\Pi_{\fl}\subset \Pi\cap\Delta_{\sub{0}}$.
We define 
\begin{align*}
&\Delta_{\fl}:= \bZ \Pi_{\fl} \cap \Delta \;, \quad  
Q(\Delta_{\fl}):= \bZ \Delta_{\fl} \;, \quad
\Delta_{\fl}^{+}:=\Delta^{+} \cap \Delta_{\fl} \;,  \quad 
Q^+(\Delta_{\fl}):= \bN \Delta_{\fl}^+ \;,  \\
&\Delta_{\fn}:=\Delta^+\setminus \Delta_{\fl} \;, \quad
\Delta_{\fn,\sub{0}}:=\Delta_{\fn}\cap\Delta_{\sub{0}}\;, \quad
\overline{\Delta}_{\fn,{\sub{0}}}:=\Delta_{\fn}\cap\overline{\Delta}_{\sub{0}}.
\end{align*}

Let us define the subalgebras $\frakp, \fl, \fn$ by
\begin{align*}
\begin{split}
&\fl : = \frakh\oplus \bigoplus_{\alpha\in \Delta_{\fl}} \frakg^{\alpha} \;,
 \qquad
\fn :=  \bigoplus_{\alpha\in \Delta_{\fn}} \frakg^{\alpha}\;, \\
&\mathfrak{p}
:= \frakh\oplus \bigoplus_{\alpha\in \Delta_{\fl} \cup \Delta^+} \frakg^{\alpha}
= \fl \oplus \fn \;.
\end{split}
\end{align*}
It then follows that $\fl\subset\frakg_{\sub{0}}$, and the subalgebra 
$\fl$ is a reductive Lie algebra with root system $\Delta_{\fl}$.
Let us denote the Weyl group of $\fl$ 
by $W_{\fl}$.
We note that $\Delta_{\fn}=\Delta_{\fn,\sub{0}}\sqcup\Delta^+_{\sub{1}}$.
The subalgebra $\mathfrak{p} (\supset \mathfrak{b}) $ is a parabolic subalgebra of $\mathfrak{g}$,
and we consider parabolic Verma modules with respect to this subalgebra.
Note that when $\Pi_{\fl}=\varnothing$ then $\frakp$ coincides with the Borel subalgebra $\mathfrak{b}$. 

Define the sets of weights
\begin{align*}
&P(\Delta_{\fl}) := \{
 \lambda \in \frakh^* | \, \langle \lambda, \alpha^\vee\rangle \in \mathbb{Z} \, \,  (\forall \alpha \in \Pi_{\fl})
\} \;, \\ 
&P^+(\Delta_{\fl}) := \{
 \lambda \in \frakh^* | \, \langle \lambda, \alpha^\vee\rangle \in \mathbb{N} \, \,  (\forall \alpha \in \Pi_{\fl})
\} \;.
\end{align*}
For a given weight $\lambda \in P^+(\Delta_{\fl})$ consider 
a finite-dimensional irreducible representation $V(\lambda)$ of $\fl$ with
highest weight $\lambda$.
We can regard this as a representation of the parabolic subalgebra $\mathfrak{p}$, 
by letting $\mathfrak{n}$ act trivially on $V(\lambda)$.
The representation $V(\lambda)$ here is then naturally regarded as a representation of the universal enveloping algebra 
$\frakU(\mathfrak{p})$.
We are interested in the parabolic Verma module
\beq
M_{\mathfrak{p}}(\lambda):= \textrm{Ind}_{\frakp}^{\frakg} (V(\lambda))=\frakU(\mathfrak{g}) \otimes_{\frakU(\mathfrak{p}) } V({\lambda}) \;.
\eeq
This module has a weight space decomposition:
\beq
M_{\mathfrak{p}}(\lambda)=
\displaystyle\bigoplus_{\mu \leq \lambda} M_{\mathfrak{p}}(\lambda)^{\mu} \;.
\eeq 
In the case $\Pi_{\fl}=\varnothing$ (with $\lambda\in \frakh^*$ arbitrary) 
we denote the parabolic Verma module $M_{\mathfrak{p}}(\lambda)$ by $M(\lambda)$. This
$M(\lambda)$ is called a Verma module, or more explicitly a non-parabolic Verma module.
We embed $V(\lambda)$ into $M_{\mathfrak{p}}(\lambda)$
as $1\otimes V(\lambda)$.
Then 
\beq
V(\lambda)= \bigoplus_{\nu\in \lambda-Q^+(\Delta_{\fl})} M_{\mathfrak{p}}(\lambda)^{\nu}\;.
\eeq

\subsection{Characters}

Let $\bZ^{\frakh^*}$ be the additive group of all maps $\frakh^*\to \bZ$.
An element $\chi\in \bZ^{\frakh^*}$ is written as $\chi=\sum_{\lambda\in\frakh^*}\chi_\lambda e^{\lambda}$. Then the support of the map $\chi$ is defined as $\operatorname{Supp}(\chi)=\{\lambda\in\frakh^* : \chi_\lambda\neq 0\}$.
Let $\bZ\langle \frakh^* \rangle$ be the subset of $\bZ^{\frakh^*}$ consisting of $\chi$ whose support is contained in a set $\bigcup_{i=1}^n \{\mu|\, \mu\leq \lambda_i\}$ for some finitely many weights $\lambda_1,\dots,\lambda_n\in\frakh^*$.
We can define the multiplication in $\bZ\langle \frakh^* \rangle$ by extending
 the rule $e^{\lambda}\cdot e^{\mu} = e^{\lambda+\mu}$, namely,
 we define the product $\chi\cdot\chi'$ for $\chi,\chi'\in\bZ\langle \frakh^* \rangle$ by 
$(\chi\cdot\chi')_\nu=\sum_{\mu\in\frakh^*}\chi_{\nu-\mu}\chi'_{\mu}$.
Then $\bZ\langle \frakh^* \rangle$ becomes an algebra with the unit $e^0=1$.
We can also define an infinite sum $\sum_i \chi^i$
 of $\chi^i\in \bZ\langle \frakh^* \rangle$
 if for any $\mu\in \frakh^*$
 we have $(\chi^i)_{\mu}=0$ for all but finitely many $i$.

For a weight module $M=\bigoplus_{\lambda\in\frakh^*}M^{\lambda}$ we define its character as ${\rm ch}\, M:=\sum_{\lambda\in\frakh^*}(\dim M^{\lambda})e^{\lambda}$.  In what follows we will consider modules belonging to the category $\mathcal{O}$ \cite{Bernshtein}, and in such cases we have ${\rm ch}\, M\in\bZ\langle\frakh^*\rangle$.

For $\lambda\in\frakh^*$
 a partition of $\lambda$ with respect to $\Delta^+$
 is a map $\pi:\Delta^+\to \bN$ such that $\pi(\alpha)=0 \textrm{ or } 1$
 for $\alpha\in \Delta_{\sub{1}}^+$
 and $\sum_{\alpha\in\Delta^+}\pi(\alpha)\alpha=\lambda$.
We define $\CP(\lambda)\in \bN$ to be the number of partitions of $\lambda$.
The function $\CP$ is called the partition function with respect to $\Delta^+$.
Let us further define 
\begin{align*}
\mathfrak{P} := \sum_{\lambda\in\frakh^*} \CP(\lambda) \, e^{-\lambda}\in \bZ\langle\frakh^*\rangle \;.
\end{align*}
Then it is easy to see that 
\begin{align*}
\mathfrak{P}=\frac{\prod_{\alpha\in \Delta_{\sub{1}}^+}
 (1+e^{-\alpha})}{\prod_{\alpha\in \Delta_{\sub{0}}^+}(1-e^{-\alpha})} \;.
\end{align*}
Here, the right hand side is well-defined as an element of $\bZ\langle\frakh^*\rangle$, since
$1-e^{-\alpha}$ has an inverse element $\sum_{n=0}^{\infty} e^{-n\alpha} \in \bZ\langle\frakh^*\rangle$.
Similarly, we define $\CP_{\fl}$ (resp.\ $\CP_{\fn}$) to be the partition function with respect to $\Delta_{\fl}^+$ (resp.\ $\Delta_{\fn}$),
and correspondingly define $\mathfrak{P}_{\fl}$
 (resp.\ $\mathfrak{P}_{\fn}$) $\in \bZ\langle\frakh^*\rangle$.

For $\lambda\in \frakh^{*}$ the Poincar\`{e}-Birkhoff-Witt theorem gives
\begin{align}\label{M_gen}
{\rm ch}\, M(\lambda)=\mathfrak{P}\, e^\lambda\;.
\end{align}
The characters ${\rm ch}\, M(\lambda)$ for different $\lambda$'s are linearly independent
 in the following sense:
\begin{lem}\label{lem.cancel}
Let $\lambda\in\frakh^*$.
If 
\beq
\sum_{\mu\leq \lambda} c_{\mu}\, {\rm ch}\, M(\mu)=0  
\eeq
for some $c_{\mu}\in \bZ$,
then $c_{\mu}=0$ for all $\mu$.
\end{lem}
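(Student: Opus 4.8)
The plan is to exploit the ``upper-triangularity'' of the family $\{{\rm ch}\, M(\mu)\}$ with respect to the partial order $\leq$. By \eqref{M_gen} we have ${\rm ch}\, M(\mu) = \mathfrak{P}\, e^\mu$; since the zero map is the unique partition of $0$ we have $\CP(0) = 1$, and $\CP(\gamma) = 0$ unless $\gamma \in Q^+(\Delta)$. Hence
\[
{\rm ch}\, M(\mu) \;=\; e^{\mu} \;+\; \sum_{\nu < \mu} \bigl(\dim M(\mu)^{\nu}\bigr)\, e^{\nu},
\]
so that the coefficient of $e^{\mu}$ in ${\rm ch}\, M(\mu)$ is $1$ and every other weight appearing is strictly below $\mu$.

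Next I would isolate a finiteness fact: for any fixed $\mu_0 \in \frakh^*$ the order interval $\{\mu : \mu_0 \leq \mu \leq \lambda\}$ is finite. Writing $\lambda - \mu = \sum_{\alpha \in \Delta^+} n_\alpha\, \alpha$ with $n_\alpha \in \bN$ and pairing with the regular element $h \in \frakh$ used to define $\Delta^+$, we get $\operatorname{Re}\bigl((\lambda-\mu)(h)\bigr) = \sum_\alpha n_\alpha \operatorname{Re}(\alpha(h)) \geq c \sum_\alpha n_\alpha$, where $c := \min_{\alpha \in \Delta^+} \operatorname{Re}(\alpha(h)) > 0$; since $\operatorname{Re}\bigl((\lambda-\mu)(h)\bigr) \leq \operatorname{Re}\bigl((\lambda-\mu_0)(h)\bigr)$ is bounded above, $\sum_\alpha n_\alpha$ is bounded, leaving finitely many $\mu$. (This also confirms that the sum in the statement is a well-defined element of $\bZ\langle \frakh^*\rangle$.) It then follows that every nonempty subset $S \subseteq \{\mu : \mu \leq \lambda\}$ has a $\leq$-maximal element: picking $\mu_1 \in S$, the set $\{\mu \in S : \mu \geq \mu_1\}$ is finite and nonempty, hence has a maximal element $\mu_0$, and any $\mu' \in S$ with $\mu' > \mu_0$ would satisfy $\mu' \geq \mu_1$, contradicting the maximality of $\mu_0$ in that finite set.

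To finish, I would argue by contradiction. Assume $\sum_{\mu \leq \lambda} c_\mu\, {\rm ch}\, M(\mu) = 0$ while $S := \{\mu : c_\mu \neq 0\}$ is nonempty, and let $\mu_0$ be a maximal element of $S$. In $c_\mu\, {\rm ch}\, M(\mu)$ the coefficient of $e^{\mu_0}$ equals $c_\mu \dim M(\mu)^{\mu_0}$, which is nonzero only if $c_\mu \neq 0$ and $\mu_0 \leq \mu$; among elements of $S$ the only such $\mu$ is $\mu_0$ itself. Hence the coefficient of $e^{\mu_0}$ in the whole sum is $c_{\mu_0}\dim M(\mu_0)^{\mu_0} = c_{\mu_0} \neq 0$, contradicting the vanishing of the sum. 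Therefore $c_\mu = 0$ for all $\mu$.

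The argument is elementary and I do not anticipate a genuine obstacle; the only point meriting care is the existence of a maximal element in $S$, which is why the finiteness of order intervals --- and hence the role of the regular element $h$ from the definition of $\Delta^+$ --- deserves to be recorded as a separate step rather than taken for granted.
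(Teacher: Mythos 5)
Your proof is correct and follows essentially the same route as the paper's: pick a maximal weight $\mu_0$ among those with $c_{\mu}\neq 0$ and observe that the coefficient of $e^{\mu_0}$ in the sum is $c_{\mu_0}\neq 0$. The extra steps you record --- the upper-triangularity of ${\rm ch}\,M(\mu)$ and the finiteness of order intervals via the regular element $h$, which guarantees both the existence of the maximal element and the well-definedness of the infinite sum --- are details the paper leaves implicit.
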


\begin{proof}
If $c_{\mu}\neq 0$ for some $\mu$,
 we can find a maximal weight $\mu_0$ among such $\mu$'s. 
Then the coefficient of $e^{\mu_0}$ in 
 $\sum_{\mu} c_{\mu}\, {\rm ch}\, M(\mu)$
 is $c_{\mu_0}\neq 0$, which is a contradiction.
\end{proof}


The character of a parabolic Verma module is given by the following lemma: 
\begin{lem}\cite[Lemma 1]{Jantzen}\label{lem.alternate}
For all $\lambda\in P^+(\Delta_{\fl})$ we have
\beq
{\rm ch}\, M_{\mathfrak{p}}(\lambda) = \sum_{w\in W_{\fl}} {\rm det}(w) \, {\rm ch} \, M(w.\lambda) \;.
\eeq
\end{lem}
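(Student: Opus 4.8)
The plan is to reduce the statement to a purely character-theoretic identity in the ring $\bZ\langle\frakh^*\rangle$ and then prove that identity by comparing weight multiplicities, using the PBW decomposition $\frakg = \fn^- \oplus \fl \oplus \fn$ together with the Weyl character formula for the finite-dimensional $\fl$-module $V(\lambda)$. First I would record, from the PBW theorem applied to the induced module $M_{\frakp}(\lambda) = \frakU(\frakg)\otimes_{\frakU(\frakp)} V(\lambda)$, that as an $\frakh$-module $M_{\frakp}(\lambda) \cong \frakU(\fn^-)\otimes_{\bC} V(\lambda)$, where $\fn^- = \bigoplus_{\alpha\in\Delta_{\fn}}\frakg^{-\alpha}$. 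Since the character of $\frakU(\fn^-)$ is exactly $\mathfrak{P}_{\fn}$ (the partition function with respect to $\Delta_{\fn}$, with odd roots contributing factors $1+e^{-\alpha}$ and even roots factors $(1-e^{-\alpha})^{-1}$), this gives
\beq
{\rm ch}\, M_{\frakp}(\lambda) = \mathfrak{P}_{\fn}\cdot {\rm ch}\, V(\lambda)\;.
\eeq

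Next I would invoke the Weyl character formula for the reductive Lie algebra $\fl$: since $\lambda\in P^+(\Delta_{\fl})$, the module $V(\lambda)$ is finite-dimensional and
\beq
{\rm ch}\, V(\lambda) = \frac{\sum_{w\in W_{\fl}}{\rm det}(w)\, e^{w(\lambda+\rho_{\fl})-\rho_{\fl}}}{\prod_{\alpha\in\Delta_{\fl}^+}(1-e^{-\alpha})}\;,
\eeq
where $\rho_{\fl}$ is the Weyl vector of $\fl$; the denominator here equals $\mathfrak{P}_{\fl}^{-1}$. Using $w.\lambda = w(\lambda+\rho_{\fl})-\rho_{\fl}$ for $w\in W_{\fl}$ (the dot-action of $W_{\fl}$ agrees with that of $W$ restricted to $\fl$ because $\rho - \rho_{\fl}$ is $W_{\fl}$-invariant), the formula becomes ${\rm ch}\, V(\lambda) = \mathfrak{P}_{\fl}\sum_{w\in W_{\fl}}{\rm det}(w)\, e^{w.\lambda}$. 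Combining with the previous display and the factorization $\mathfrak{P} = \mathfrak{P}_{\fl}\cdot\mathfrak{P}_{\fn}$ (which holds because $\Delta^+ = \Delta_{\fl}^+ \sqcup \Delta_{\fn}$ as a disjoint union respecting the $\bZ/2$-grading) yields
\beq
{\rm ch}\, M_{\frakp}(\lambda) = \mathfrak{P}_{\fn}\cdot\mathfrak{P}_{\fl}\sum_{w\in W_{\fl}}{\rm det}(w)\, e^{w.\lambda} = \sum_{w\in W_{\fl}}{\rm det}(w)\,\mathfrak{P}\, e^{w.\lambda} = \sum_{w\in W_{\fl}}{\rm det}(w)\,{\rm ch}\, M(w.\lambda)\;,
\eeq
where the last equality is \eqref{M_gen}. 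This is the desired identity.

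The main obstacle is making the application of the Weyl character formula fully legitimate in the super setting: one must be sure that $V(\lambda)$ really is finite-dimensional and genuinely an $\fl$-module of the classical type, so that the ordinary Weyl formula applies verbatim. This is exactly why the hypothesis $\Pi_{\fl}\subset\Pi\cap\Delta_{\sub{0}}$ was imposed — it forces $\fl\subset\frakg_{\sub{0}}$ to be an honest reductive Lie algebra with root system $\Delta_{\fl}$ and Weyl group $W_{\fl}$, as recorded in the excerpt — so no super-analogue of the character formula is needed. A secondary point to check carefully is the well-definedness of all manipulations inside $\bZ\langle\frakh^*\rangle$: the infinite products defining $\mathfrak{P}$, $\mathfrak{P}_{\fl}$, $\mathfrak{P}_{\fn}$ all lie in this ring, the ring is commutative and has no zero divisors on the relevant supports, and multiplication by the finite sum $\sum_{w\in W_{\fl}}{\rm det}(w)e^{w.\lambda}$ is harmless; hence the factorization $\mathfrak{P} = \mathfrak{P}_{\fl}\mathfrak{P}_{\fn}$ and the cancellation of $\mathfrak{P}_{\fl}$ against the Weyl denominator are valid. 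Alternatively — and perhaps more cleanly — one can avoid the explicit character formula altogether by resolving $V(\lambda)$ by a (finite) BGG-type complex of $\fl$-Verma modules $\bigoplus_{w\in W_{\fl}}M_{\fl}(w.\lambda)$ in alternating degrees, inducing up to $\frakg$, and using exactness of $\frakU(\frakg)\otimes_{\frakU(\frakp)}(-)$ over $\frakU(\frakp)$ together with ${\rm Ind}_{\frakp}^{\frakg}\,{\rm Ind}_{\frakb}^{\frakp}(\bC_{w.\lambda}) = M(w.\lambda)$; taking Euler characteristics of characters then gives the claim. I would present the character-formula argument as the main proof since it is shortest, noting that it is exactly Jantzen's argument and works without change once $\fl$ is known to be an ordinary reductive Lie algebra.
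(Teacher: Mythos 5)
Your proof is correct and follows exactly the same route as the paper's: the PBW identity ${\rm ch}\,M_{\frakp}(\lambda)=\mathfrak{P}_{\fn}\,{\rm ch}\,V(\lambda)$, the Weyl character formula for the reductive algebra $\fl$, the factorization $\mathfrak{P}=\mathfrak{P}_{\fl}\mathfrak{P}_{\fn}$, and \eqref{M_gen}. Your extra observation that the $\rho$- and $\rho_{\fl}$-shifted dot actions agree because $\rho-\rho_{\fl}$ is $W_{\fl}$-invariant is a worthwhile detail that the paper leaves implicit.
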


\begin{proof}
This lemma follows from a combination of 
\beq
\textrm{ch}\, M_{\mathfrak{p}}(\lambda) =  \mathfrak{P}_{\fn} \, \textrm{ch}\, V(\lambda) \;,
\eeq
the Weyl character formula
\begin{align}\label{eq.Weyl_char}
\textrm{ch}\, V(\lambda) = \mathfrak{P_{\fl}} \sum_{w\in W_{\fl}} (-1)^w e^{w.\lambda} \;,
\end{align}
the factorization
\begin{align}\label{tripleP}
\mathfrak{P}=\mathfrak{P}_{\fl} \, \mathfrak{P}_{\fn} \;,
\end{align}
and \eqref{M_gen}.
\end{proof}

We define for $\lambda\in P(\Delta_{\fl})$
\beq
\chi^{\mathfrak{p}}(\lambda):= \sum_{w\in W_{\fl}} \textrm{det}(w)\, \textrm{ch}\, M(w. \lambda)\;, 
\eeq
where the dot action $w.\lambda$ is defined to be
the Weyl group action, with shift by the Weyl vector $\rho$:
\beq
w.\lambda:=w(\lambda+\rho)-\rho \;.
\eeq
We can easily see that 
\beq
w.w'.\lambda=(w w').\lambda \;.
\eeq
for $w, w'\in W_{\fl}$.
For $\lambda\in P^{+}(\Delta_{\fl})$
 we have $\chi^{\mathfrak{p}}(\lambda)={\rm ch}\,M_{\frakp}(\lambda)$ 
 by Lemma~\ref{lem.alternate}.
It is easy to show from definition that $\chi^{\mathfrak{p}}(w.\lambda)=\det(w)\chi^{\mathfrak{p}}(\lambda)$. In particular, $\chi^{\mathfrak{p}}(\lambda)=0$ if $w.\lambda=\lambda$ for some $w\in W_\fl$.

For an odd isotropic root $\alpha$, let us define a 
similar expression, but with contributions from the root $\alpha$ removed:
\beq
\textrm{ch}\, M_{\alpha}(\lambda) :=\textrm{ch}\, M(\lambda)\,/ (1+e^{-\alpha}) \;.
\eeq
We then define another formal character $\chi_{\alpha}^{\frakp}$ by
\beqn
\label{chi_def}
\chi^{\mathfrak{p}}_{\alpha}(\lambda):= \sum_{w\in W_{\fl}} \textrm{det}(w)\, \textrm{ch}\, M_{w\alpha}(w.\lambda) 
= \sum_{n=0}^{\infty} (-1)^n \chi^{\frakp}(\lambda-n\alpha)
\;.
\eeqn

\subsection{Contravariant Form}

By construction of a contragredient Lie superalgebra we can define
 an anti-automorphism $\sigma$ of $\frakg$ which
maps the root space $\frakg^{\alpha}$ to $\frakg^{-\alpha}$ and is the identity on the Cartan subalgebra $\frakh$. 
For each positive root $\alpha\in \Delta^+$ take a root vector $x_{\alpha}(\neq 0)\in\frakg^{\alpha}$ and denote its image under the anti-automorphism by $x_{-\alpha}:=\sigma(x_\alpha)\in\frakg^{-\alpha}$. We choose the normalization of $x_{\alpha}$ and $x_{-\alpha}$ so that we have $h_{\alpha}=[x_{\alpha},x_{-\alpha}]$,
where $h_{\alpha}$ was defined previously in \eqref{halpha_def}.
The anti-automorphism $\sigma$ is naturally extended to the whole of 
$\frakU(\frakg)$, which we denote by the same symbol $\sigma$.

For all $\lambda\in \mathfrak{h}^*$ there exists in the parabolic Verma module 
$M_{\mathfrak{p}}(\lambda)$
a non-zero symmetric bilinear form $(-, -)$, satisfying
\beq
(xu, v)=(u, \sigma(x)v)\textrm{ for all }x\in \frakU(\frakg) \textrm{ and } u, v \in M_{\mathfrak{p}}(\lambda) \;.
\eeq
Such a bilinear form is called 
the {\it contravariant form} in the literature.

For a Verma module $M(\lambda)$ (i.e.\ when $\frakp=\mathfrak{b}$), such a bilinear form can be 
explicitly constructed by the Harish-Chandra projection \cite[Section 8.2]{Musson}.
The surjection $M(\lambda) \twoheadrightarrow M_{\frakp}(\lambda)$ then induces a 
contravariant form on a general parabolic Verma module $M_{\frakp}(\lambda)$.
The weight spaces for different weights are orthogonal 
with respect to this bilinear form.

\begin{rem}

We use the same symbol $(-, -)$ to denote both the bilinear form on the Lie superalgebra $\frakg$
and the contravariant form on the parabolic Verma module $M_{\frakp}(\lambda)$. We hope context makes it clear 
which is meant by this notation.

\end{rem}

\subsection{Determinant Formula}

Let us choose $v\in M_{\mathfrak{p}}(\lambda)^{\lambda}$ with $v\ne 0$,
 which we normalize to be $(v,v)=1$.
For a weight $\mu\leq \lambda$ we define $D(\lambda; \mu)$
 as the determinant of $(-,-)$ in $M_{\mathfrak{p}}(\lambda)^{\mu}$
 with respect to the basis given below.


For all $\nu\in \lambda-Q^+(\Delta_{\fl})$, take an orthonormal basis
 $(e_{\nu, i})_{1\le i\le n(\nu)}$ of $V(\lambda)^{\nu}
 =M_{\mathfrak{p}}(\lambda)^{\nu}$, with $n(\nu)$ denoting the 
 dimension of $V(\lambda)^{\nu}$. 
Then $(e_{\nu, i})_{\nu,\, 1\le i\le n(\nu)}$ form a basis of $V(\lambda)$.

Let $\bfP(\eta)$ be the set of partitions of $\eta$
 with respect to $\Delta_{\fn}$, namely, $\bfP(\eta)$ is the set of maps
$\pi: \Delta_{\fn}\to \mathbb{N}$ such that 
$\pi(\alpha)=0 \textrm{ or } 1$
 for $\alpha\in \Delta_{\fn}\cap \Delta_{\sub{1}}$
and 
$\eta= \sum_{\alpha\in \Delta_{\fn}} \pi(\alpha)\, \alpha$.
For $\pi: \Delta_{\fn}\to \mathbb{N}$, we set
$S(\pi):= \sum_{\alpha\in\Delta_{\fn}} \pi(\alpha)\alpha$
 and $|\pi|:=\sum_{\alpha\in\Delta_{\fn}} \pi(\alpha)$.

It follows from the Poincar\`{e}-Birkhoff-Witt theorem that the parabolic Verma module $M_{\mathfrak{p}}(\lambda)$
is spanned by $x_{-\pi} e_{\nu, i}$,
where we defined
\beq
x_{-\pi}:=\prod_{\alpha\in\Delta_{\fn}} x_{-\alpha}^{\pi(\alpha)} \;.
\eeq
Here the product is taken in the order determined from a fixed ordering of $\Delta_{\fn}$. The determinant formula below turns out to be independent of this 
ordering choice.

Having fixed a basis, the determinant $D(\lambda; \mu)$
can now be defined as the determinant of the matrix
 $(x_{-\pi} e_{\nu, i}, x_{-\pi'} e_{\nu', j})_{(\pi,\nu,i),(\pi',\nu',j)}$,
where the indices run over 
\begin{align*}
\nu, \nu' \in \lambda-Q^+(\Delta_{\fl}), \quad
1\le i\le n(\nu),\ 1\le j\le n(\nu'), \quad
\pi\in \bfP(\nu-\mu),\ \pi'\in \bfP(\nu'-\mu).
\end{align*}
It is easy to see that the determinant $D(\lambda; \mu)$ does not depend on the choice of orthonormal basis $(e_{\nu, i})$.

Our main result is the formula for this determinant $D(\lambda;\mu) $.


\begin{thm}[Determinant Formula]\label{thm.main}

For $\lambda\in P^+(\Delta_{\fl})$ and $\mu\leq \lambda$
 the determinant  $D(\lambda;\mu)$ is given by
\begin{empheq}[box={\mybluebox[7pt]}]{align}
\begin{split}
&D(\lambda; \mu)  =  c\, D_1\, D_2\, D_3  \;, \\
&
D_1:= \prod_{\alpha\in {\bDelta_{\fn, \sub{0}}}} \prod_{r=1}^{\infty} 
\left(
( \lambda+\rho , \alpha )- \frac{r}{2} (\alpha, \alpha)
\right)^{\chi^{\mathfrak{p}}(\lambda-r \alpha)_{\mu}} \;,\\
&D_2:= \prod_{\alpha \in \Delta_{\sub{1}}^+
 \setminus \overline{\Delta}_{\sub{1}}^+} 
\prod_{r=1}^{\infty}
\left(
( \lambda+\rho , \alpha )
-\frac{2r-1}{2}(\alpha, \alpha) \right)^{\chi^{\mathfrak{p}}(\lambda-(2r-1)\alpha)_{\mu}} \;, \\
&D_3:= \prod_{\alpha \in \overline{\Delta}_{\sub{1}}^+} 
\left( \lambda+\rho , \alpha 
\right)^{\chi^{\mathfrak{p}}_{\alpha}(\lambda-\alpha)_{\mu}}  \;.
\label{fermionic_determinant}
\end{split}
\end{empheq}
Here, $c$ is a non-zero constant which depends only on $\lambda-\mu$.
\end{thm}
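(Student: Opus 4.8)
The plan is to follow the general strategy pioneered by Jantzen and Kac, adapted to the super and parabolic setting simultaneously. The key idea is to regard the determinant $D(\lambda;\mu)$ as a polynomial function of $\lambda \in \frakh^*$ (for fixed $\mu-\lambda$, or rather fixed $\lambda-\mu$), and to pin it down by (i) computing its degree, (ii) determining its zero locus together with multiplicities, and (iii) absorbing everything left over into the constant $c$. First I would set up the polynomiality: the matrix entries $(x_{-\pi}e_{\nu,i}, x_{-\pi'}e_{\nu',j})$ are, after using the contravariant property and the Harish-Chandra projection, polynomial in $\lambda$ of a controlled degree, so $D(\lambda;\mu)$ is a polynomial in $\lambda$ whose total degree is bounded by $\sum_{\nu}\sum_{\pi\in\bfP(\nu-\mu)} |\pi|$ — and one checks this matches the total degree of the conjectured right-hand side by a counting argument using the identity $\sum_{w\in W_\fl}\det(w)\,\CP(\dots)$ encoded in $\chi^{\frakp}$.

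The heart of the proof is locating the zeros. For each positive root $\alpha$ I would study the ``$\fraksl(2)$-like'' (or $\frakosp(1|2)$-like, for non-isotropic odd $\alpha$) subalgebra generated by $x_{\pm\alpha}$ and show that when the relevant linear form $(\lambda+\rho,\alpha) - \tfrac{r}{2}(\alpha,\alpha)$ (resp.\ the odd analogues) vanishes, a singular vector of weight $\lambda - r\alpha$ (resp.\ $\lambda-(2r-1)\alpha$, resp.\ $\lambda-\alpha$) appears in $M_{\frakp}(\lambda)$, forcing the contravariant form to degenerate on $M_\frakp(\lambda)^\mu$ for every $\mu \le \lambda-r\alpha$. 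This is where the three cases $D_1, D_2, D_3$ split: $D_1$ comes from even roots and is essentially Jantzen's parabolic computation with the twist that $\alpha$ may be twice an odd root; $D_2$ from non-isotropic odd roots uses Kac's $\frakosp(1|2)$ analysis; $D_3$ from isotropic odd roots is genuinely superalgebraic — here $x_{-\alpha}^2=0$, the reflection $s_\alpha$ is not in the Weyl group, and the vanishing order of the form involves the modified character $\chi^{\frakp}_\alpha$ rather than $\chi^{\frakp}$. To get the exact multiplicity with which each factor occurs, I would introduce a Jantzen-type filtration $M_\frakp(\lambda) = M^0 \supset M^1 \supset \cdots$ defined via a deformation $\lambda \mapsto \lambda + t\gamma$ for generic $\gamma$, and use the sum formula $\sum_{i\ge 1}\operatorname{ch} M^i = \sum (\text{contributions from each }\alpha)$ to read off $\operatorname{ord}_{t=0} D(\lambda+t\gamma;\mu)$, comparing with the order of vanishing of the proposed product. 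Lemma~\ref{lem.cancel} and the character formulas (Lemma~\ref{lem.alternate}, the factorization $\mathfrak{P}=\mathfrak{P}_\fl\mathfrak{P}_\fn$) are the bookkeeping tools that convert these character identities into the exponents $\chi^{\frakp}(\lambda-r\alpha)_\mu$ etc.

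Once the zero divisor of $D(\lambda;\mu)$ is shown to contain the divisor of $D_1D_2D_3$ with at least the stated multiplicities, a degree count shows the two divisors coincide and the quotient is a constant $c$; since the left side is visibly independent of $\lambda$ in its leading behavior and the construction used only $\lambda-\mu$, $c$ depends only on $\lambda-\mu$. One should also verify $c\ne 0$, e.g.\ by evaluating at a generic $\lambda$ where $M_\frakp(\lambda)$ is irreducible and the form is nondegenerate, or by an explicit leading-coefficient computation in the PBW basis. I expect the main obstacle to be the isotropic-root case $D_3$: establishing that the degeneracy locus is exactly $(\lambda+\rho,\alpha)=0$ with multiplicity $\chi^{\frakp}_\alpha(\lambda-\alpha)_\mu$ requires carefully analyzing how a single odd reflection (which squares to the identity but generates no $\frakosp(1|2)$) produces singular vectors, handling the subtlety that several isotropic roots may be proportional or interact, and correctly matching the alternating sum $\sum_n (-1)^n \chi^{\frakp}(\lambda-n\alpha)$ appearing in \eqref{chi_def} against the actual vanishing order coming from the Jantzen filtration — this is precisely the point where the super structure departs most sharply from both Jantzen's and Kac's original arguments.
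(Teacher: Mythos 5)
Your overall architecture (polynomiality in $\lambda$, zero locus, multiplicities, constant) has the right shape, and your steps (i) and the degree count do correspond to the paper's leading-term computation over the polynomial ring $A$ and to Proposition~\ref{prop_plane}. But the heart of your argument --- producing the lower bound on multiplicities by constructing, for each root $\alpha$ and each $r$ with $(\lambda+\rho,\alpha)=\tfrac{r}{2}(\alpha,\alpha)$, a singular vector of weight $\lambda-r\alpha$ in $M_{\frakp}(\lambda)$ --- fails in the parabolic setting. Such a singular vector exists in the non-parabolic Verma module $M(\lambda)$, but it need not survive the projection onto $M_{\frakp}(\lambda)$; this is exactly why the exponents in the theorem are the alternating sums $\chi^{\frakp}(\lambda-r\alpha)_\mu=\sum_{w\in W_{\fl}}\det(w)\,{\rm ch}\,M(w.(\lambda-r\alpha))_\mu$, which can be zero or negative, rather than the plain partition-function exponents of the Shapovalov/Kac formulas. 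Consequently your scheme ``show the divisor of $D$ contains that of $D_1D_2D_3$ with at least the stated multiplicities, then match degrees'' is not even well posed: a product with negative exponents has no divisor to contain, and on hyperplanes where the alternating sum vanishes the form does \emph{not} degenerate, even though the singular-vector heuristic predicts it should.

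The second gap is the separation of proportional factors. After the leading-term computation one knows only the \emph{sum} $\sum_{t\in\bC}v(i,t)$ of exponents over all hyperplanes $p_i+t=0$ in a given parallel class (many distinct quasi-roots produce proportional linear forms). Your proposal offers no mechanism for distributing this total among the individual $t$; invoking the Jantzen sum formula $\sum_{n\ge 1}{\rm ch}\,M(n)=\sum_\alpha(\cdots)$ at this point is circular, because that identity is itself deduced \emph{from} the determinant formula. The paper resolves this by using only qualitative consequences of the filtration: each $v(i,t)$ is a $\bZ$-linear combination of ${\rm ch}\,M(\tlambda-\delta)$ with $\Omega(\tlambda)=\Omega(\tlambda-\delta)$ (Lemmas~\ref{lem.ab}--\ref{lem.new}), and the degree-one identity $p_i+t\sim\Omega(\tlambda)-\Omega(\tlambda-\delta)$ (Lemma~\ref{lem.divide}) shows that each $\delta$ can contribute to at most one $t$, so the known sum determines each $v(i,t)$ uniquely (Lemma~\ref{lem.unique}). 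Some version of this Casimir-separation step is indispensable; without it neither the even, the non-isotropic odd, nor the isotropic case closes.
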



\begin{rem}

The first product is over
 $\bDelta_{\fn,\sub{0}}:=\Delta_{\fn}\cap\overline{\Delta}_{\sub{0}}$
and not over the whole $\Delta_{\fn,\sub{0}}$.
Also, the power in the expression of $D_3$ is $\chi^{\frakp}_{\alpha}$,
 not $\chi^{\mathfrak{p}}$.

\end{rem}

\begin{rem}
The integers in the exponents, 
$\chi^{\frakp}(\lambda-s\alpha)_\mu$
and $\chi^{\frakp}_\alpha(\lambda-\alpha)_\mu$,
can be negative in general.
However, it will turn out that $D_1D_2D_3$
 as a function on $\lambda\in P^+(\Delta_{\fl})$ with $\lambda-\mu$
 fixed is a polynomial in $\lambda$
 on each connected component of $P^+(\Delta_{\fl})$.
The product $D_1D_2D_3$ is thus well-defined for all
 $\lambda\in P^+(\Delta_{\fl})$.
\end{rem}

\begin{rem}

Theorem \ref{thm.main} was conjectured recently in \cite{Yamazaki:2016vqi}.
When $\Pi_{\fl}=\varnothing$, $\mathfrak{p}$ is then a Borel subalgebra, and
Theorem \ref{thm.main} is reduced to the result by Kac \cite{Kac_1986}.\footnote{The original proposal of \cite{Kac_LNM,Kac_LNP} contained an error, which was later corrected in \cite{Kac_1986}.}
For the non-super case this is reduced to the result of Jantzen \cite[Satz2]{Jantzen}.
For the non-super and Borel case, this is reduced to the Shapovalov determinant formula \cite{Shapovalov}.

\end{rem}


\begin{rem}
For our proof of Theorem \ref{thm.main} the assumption that
 $\fl\subset\frakg_{\sub{0}}$ is crucial.
It would be interesting to generalize our theorem to the case
 $\fl \not\subset \frakg_{\sub{0}}$.
\end{rem}


\begin{rem}\label{rem.CFT}
As commented in introduction, the relevance of the determinant formula
for higher-dimensional CFTs has recently been discussed in \cite{Penedones:2015aga,Yamazaki:2016vqi}.
In this application, we choose $\mathfrak{g}$ to be a superconformal algebra,
which was classified by Nahm \cite{Nahm:1977tg}.
In all the cases, the even part $\frakg_{\sub{0}}$ takes the form
\beq
\frakg_{\sub{0}} = \mathfrak{so}(D, 2) \oplus \mathfrak{g}_{\rm R} \;,
\eeq
where $D$ is the spacetime dimension and the subalgebra $\mathfrak{g}_{\rm R}$ represents the so-called R-symmetry
of the superconformal algebra.
The subalgebra $\frakg'$ is then taken to be
\beq
\fl=\mathfrak{so}(D)\oplus \mathfrak{so}(2) \oplus \mathfrak{g}_{\rm R} \;.
\eeq
where $\mathfrak{so}(D)$ is the subgroup of spatial rotations and $\mathfrak{so}(2)$ is generated by the dilatation operator.
In particular $\Pi_{\fl}$ contains no odd roots.
The subalgebra $\fn$ is Abelian, and is generated by the so-called special conformal boosts.
The element $\lambda\in P^+(\Delta_{\fl})$ is then given by a pair of the conformal dimension
and the spins under the rotation group, which together specify a conformal primary operator.

\end{rem}

The next section will be devoted to the proof of Theorem \ref{thm.main}.


\section{Proof of Theorem \ref{thm.main}}\label{sec.proof}

\subsection{Outline of the Proof}

Before coming to the detailed proof, 
let us first sketch the outline.
Readers not interested in such an overview can safely skip this subsection.

Our proof consists of three steps.

The first step is to determine the leading term of the determinant $D(\lambda; \mu)$.  We view $D(\lambda; \mu)$ as a polynomial in the highest weight $\lambda$.  This is carried out by changing the coefficient ring from  $\mathbb{C}$
to the polynomial ring $A=\mathbb{C}[\{T_{\alpha}\}_{\alpha \in \Pi\setminus \Pi_{\fl}}]$, with indeterminates $T_{\alpha} (\alpha\in \Pi\setminus \Pi_{\fl})$
 and highest weight $\tlambda$ as in \eqref{def_tlambda}.
We can repeat the definitions of the previous section
 in this coefficient ring, and then
 the determinant, which we denote by $D(\lambda; \mu)_A$,
 is a polynomial in the indeterminates $T_{\alpha}$.
We can then define its leading term as the top degree part
 with respect to the total degree for $T_{\alpha}$.
It turns out that only the diagonal entries of the matrix contributes to this leading term,
and hence we easily obtain the leading term. This gives Proposition \ref{prop.leading}.

The next step is to locate possible positions of the zeros of the determinant.
This can be substantially constrained by a 
necessarily condition for the existence of zeros (Proposition \ref{prop_zero}),
leading to a conclusion that the determinant should be (up to a non-zero constant) a product of 
degree-one polynomials \eqref{eq.prop_zero} corresponding to quasi-roots.

The final step of the argument is to determine the multiplicity of each hyperplane factor inside the determinant.
Again, the trick of changing the coefficient ring to the polynomial ring $A$ in \eqref{def_A} helps here.
The basic idea is that 
to determine the multiplicity all we need to do is to count the order of the zeros when we perturb
the highest weight by an infinitesimal parameter. The variables $T_{\alpha}$ exactly do this job.
The Jantzen filtration \cite{Jantzen,Jantzen_LNM} is a powerful tool for this computation.

Overall, our proof relies heavily on the proof of \cite{Jantzen}.
We in particular reproduce several arguments from \cite{Jantzen} for self-containedness and the convenience of the reader.
It should be pointed out, however, that our proof differs from that of \cite{Jantzen}
in a number of key aspects, in particular in the considerations of 
isotropic odd roots.
Our proof also uses ideas of \cite[section 10]{Musson},
 which gives an explicit proof of \cite{Kac_1986}.

\subsection{Coefficient Change}

Let $A$ be a $\bC$-algebra.
We can define a parabolic Verma module and hence its determinant in the coefficient ring $A$.
To do this we need to change the coefficient for many of the ingredients we have discussed so far.
We do not bother to repeat all the definitions/results in $A$-coefficient, since the discussion is completely parallel.
For example, the universal enveloping algebra is now given by $\frakU(\frakg)_A=\frakU(\frakg) \otimes_{\mathbb{C}} A$;
similarly for $\frakU(\frakp)_A$. 
Also $P^+(\Delta_{\fl})_A$ is defined as
\beq
P^+(\Delta_{\fl})_A
=\{\lambda\in
 \frakh^*_A:=\frakh^*\otimes_{\bC} A | \, \langle \lambda, \alpha^\vee\rangle \in \mathbb{N} \, \,  (\forall \alpha \in \Pi_{\fl})\} \;.
\eeq
Then the parabolic Verma module, now with $A$-coefficient,
is defined as 
\beq
M_{\mathfrak{p}}(\lambda)_A :=\frakU(\frakg)_A \otimes_{\frakU(\mathfrak{p})_A} V(\lambda)_A
\eeq
for $\lambda\in P^+(\Delta_{\fl})_A$.
In the case $\Pi_{\fl}=\varnothing$ we denote $M_{\mathfrak{p}}(\lambda)_A$ by $M(\lambda)_A$.

Suppose that there is a ring homomorphism $\varphi: A\to A'$.
This canonically induces corresponding morphisms in many of the ingredients we have,
and for simplicity of notation we denote all these morphisms by $\varphi$.
For example, we have induced morphisms $P^+(\Delta_{\fl})_A \to P^+(\Delta_{\fl})_{A'}$,
$V(\lambda)_A\to V(\varphi(\lambda))_{A'}$ and 
$M_{\mathfrak{p}}(\lambda)_A\to M_{\mathfrak{p}}(\varphi(\lambda))_{A'}$,
which are compatible with $\mathfrak{U}(\frakg)_A\to \frakU(\frakg)_{A'}$. All these morphisms are denoted by $\varphi$.
For characters we have $\varphi({\rm ch}\, M_{\mathfrak{p}}(\lambda)_A)={\rm ch}\, M_{\frakp}(\varphi(\lambda))_{A'}$,
and for the determinant we have $\varphi(D(\lambda; \mu)_A) = D(\lambda; \varphi(\mu))_{A'}$.

Following the strategy outlined above, let us first consider a change of the coefficient ring
from $\mathbb{C}$ into a polynomial ring
\beqn
A:=\mathbb{C}[\{T_{\alpha}\}_{\alpha \in \Pi\setminus \Pi_{\fl}}] \;,
\label{def_A}
\eeqn
where $\{T_{\alpha}\}_{\alpha \in \Pi\setminus \Pi_{\fl}}$ are algebraically independent over $\mathbb{C}$.

For a root $\alpha\in \Pi$ we define a weight $\omega_{\alpha}\in\mathfrak{h}^*_A$ by:
\begin{align*}
&\langle\omega_{\alpha},\alpha^\vee \rangle=1
 \text{ if $\alpha\in \Pi_{\fl}$} \;,
\qquad
\langle\omega_{\alpha},h_\alpha  \rangle=1
 \text{ if $\alpha\in \Pi\setminus \Pi_{\fl}$} \;,\\
&\langle\omega_{\alpha},h_\beta  \rangle=0
 \text{ if $\alpha, \beta \in \Pi$ and $\alpha\neq \beta$} \;.
\end{align*}
Let us define a weight $\tlambda\in P^+(\Delta_{\fl})_A$ by 
\begin{align}
\tlambda+\rho = \sum_{\alpha \in \Pi_{\fl}} r_{\alpha} \omega_{\alpha}+ \sum_{\alpha \in \Pi\setminus \Pi_{\fl}} T_{\alpha} \omega_{\alpha} \,
\label{def_tlambda}
\end{align}
with $r_{\alpha}\in \mathbb{N}$.
A basis of the parabolic Verma module $M_{\frakp}(\tlambda)_A$
 is now given by $x_{-\pi} \tilde{e}_{\nu, i}$,
 where $(\tilde{e}_{\nu, i})_{1\leq i\leq n(\nu)}$
 is an orthonormal basis of $M_{\frakp}(\tlambda)^{\nu}_A$
 for $\nu\in \tlambda-Q^{+}(\Delta_{\fl})$.
We consider the determinant $D(\tlambda;\mu)_A$ with respect to this basis.

By a specialization homomorphism 
\begin{align*}
\varphi: A=\mathbb{C}[\{T_{\alpha}\}_{\alpha \in \Pi\setminus \Pi_{\fl}}]  \twoheadrightarrow \mathbb{C} \;,
\end{align*}
we can go back to the $\mathbb{C}$-coefficient.
If $\lambda\in\frakh^*(=\frakh^*_\bC)$ and $\langle\lambda,\alpha^\vee\rangle=r_{\alpha}$ for $\alpha\in \Pi_{\fl}$, we can take $\varphi(T_{\alpha})=\langle\lambda,h_\alpha\rangle$ for $\alpha\in \Pi\setminus\Pi_{\fl}$
 so that $\varphi(\tlambda)=\lambda$.

\subsection{Leading Term}

In the coefficient ring $A$ the determinant $D(\tlambda; \mu)_A$
is a polynomial in $\{T_{\alpha}\}_{\alpha\in \Pi\setminus \Pi_{\fl}}$. Let us discuss the leading term,
where the degree here refers to the total degree of all $\{T_{\alpha}\}_{\alpha\in \Pi\setminus \Pi_{\fl}}$ (i.e.\ it is the degree
when we collapse $T_{\alpha}$ into a single variable $T$).

We begin with the following lemma, which is essentially the same as 
Lemma 5 of \cite{Jantzen}.

\begin{lem}\label{lem.move}

Let $\alpha \in \Delta_{\fn}$
 and let $\pi$ be a partition with $\pi(\alpha)>0$.
Let $\bar{\pi}$ be a partition with $\alpha$ removed from $\pi$,
 i.e.\ $\bar{\pi}(\alpha)=\pi(\alpha)-1$,
and $\bar{\pi}(\beta)=\pi(\beta)$ for all other $\beta$.
Then $x_{\alpha} x_{-\pi} \te_{\nu, i}$ is 
a linear combination of the following expressions:
\begin{align}
&(\langle \tlambda, h_{\alpha}\rangle +s) \, x_{-\bar{\pi}} \te_{\nu,i} \;,
\label{type1}\\
&(\langle \tlambda, h_{\beta}\rangle+s)\, x_{-\pi_1} \te_{\nu',i'} 
\quad \textrm{and}\quad
x_{-\pi_2} \te_{\nu'',i''} \;,
\label{type2}
\end{align}
where $\nu', \nu''\in \tlambda-Q^+(\Delta_{\fl})$, $1\le i'\le n(\nu')$, $1\le i'' \le n(\nu'')$, $\beta\in \Delta_{\fn}$,
$s\in \mathbb{C}$ and $|\pi_1|\le |\pi|-2, |\pi_2|\le |\pi|$.
\end{lem}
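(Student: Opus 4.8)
The plan is to prove this by moving the root vector $x_\alpha$ to the right through the product $x_{-\pi}=\prod_{\beta\in\Delta_\fn}x_{-\beta}^{\pi(\beta)}$ one factor at a time, using the (super)commutation relations in $\frakU(\frakg)_A$, and then keeping careful track of how the $T_\alpha$-degree and the $|\pi|$-length change at each step. First I would order the factors of $x_{-\pi}$ according to the fixed ordering of $\Delta_\fn$ and commute $x_\alpha$ past the leftmost factors $x_{-\beta}$ with $\beta\neq\alpha$; each commutator $[x_\alpha,x_{-\beta}]$ (or anticommutator, when both roots are odd) lies in $\frakg^{\alpha-\beta}$, which is either zero, or a Cartan element (when $\beta=\alpha$), or a root vector. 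When it is a root vector $x_{\alpha-\beta}$ the length of the surviving monomial drops by two relative to $|\pi|$, contributing a term of the shape $x_{-\pi_1}\te_{\nu',i'}$ with $|\pi_1|\le|\pi|-2$ after one further normal-ordering pass; these are the terms without the linear factor in \eqref{type2} — wait, actually I will need to be more careful, since such a term may still carry one Cartan element, which is where the $(\langle\tlambda,h_\beta\rangle+s)$ prefactors in \eqref{type2} come from.

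More precisely, the key bookkeeping is: when $x_\alpha$ finally reaches and annihilates one copy of $x_{-\alpha}$, the bracket $[x_\alpha,x_{-\alpha}]=h_\alpha$ acts on the weight vector $x_{-\bar\pi'}\te_{\nu,i}$ (for various intermediate $\bar\pi'$) by the scalar $\langle\tlambda - S(\text{stuff}),h_\alpha\rangle = \langle\tlambda,h_\alpha\rangle + s$ for some $s\in\bC$ depending only on which roots have been removed; if nothing else changed we land exactly on $x_{-\bar\pi}\te_{\nu,i}$, giving the term \eqref{type1}. If instead the Cartan element is produced from $[x_\alpha,x_{-\beta}]$ with $\alpha-\beta=0$ impossible for $\beta\neq\alpha$, so any other Cartan contribution must come from a two-step process $[x_\alpha,x_{-\beta}]=cx_{\alpha-\beta}$ followed later by $[x_{\alpha-\beta},x_{-(\alpha-\beta)}]$ — this only arises when $\alpha-\beta$ and its negative both appear, again dropping the length by at least two and producing the first kind of term in \eqref{type2} with $|\pi_1|\le|\pi|-2$. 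The terms that do not collapse the length at all, namely those where $x_\alpha$ passes straight through to act on $\te_{\nu,i}$ by $\fn$ (which acts as zero) or re-enters the span with $|\pi_2|\le|\pi|$ but without a guaranteed linear factor, are collected into the last type $x_{-\pi_2}\te_{\nu'',i''}$.

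The step I expect to be the main obstacle is the combinatorial control of the iterated commutators: showing that every monomial produced is of one of the three declared shapes, with the length bounds $|\pi_1|\le|\pi|-2$ and $|\pi_2|\le|\pi|$ holding \emph{uniformly} and with the linear factors in $T_\alpha$ appearing only in the places claimed. This requires an induction on $|\pi|$ (or on the position of the removed $\alpha$ in the ordering), reducing the commutation of $x_\alpha$ past a length-$|\pi|$ monomial to commutations past shorter monomials, and carefully distinguishing the even and odd cases of $\alpha,\beta$ so that the ``$\pi(\alpha)=0$ or $1$ for odd roots'' constraint is respected throughout. Since $\fl\subset\frakg_{\sub 0}$ and $V(\lambda)_A$ is a $\frakU(\fl)_A$-module on which $\fn$ acts trivially, the action of any leftover Cartan element $h_\beta$ on $\te_{\nu,i}$ is by a scalar of the form $\langle\tlambda,h_\beta\rangle+s$ with $s\in\bC$ independent of the $T_\alpha$; I would isolate this as the one place where the hypothesis $\fl\subset\frakg_{\sub 0}$ enters, exactly as in Lemma~5 of \cite{Jantzen}. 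The rest is a routine but bookkeeping-heavy normal-ordering argument in $\frakU(\frakg)_A$, and I would present it by induction with the three term-types as the inductive hypothesis.
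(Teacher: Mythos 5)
Your proposal follows essentially the same route as the paper: write $x_{\alpha}x_{-\pi}\te_{\nu,i}=[x_{\alpha},x_{-\pi}]\te_{\nu,i}$ using $x_{\alpha}\te_{\nu,i}=0$, expand the bracket into terms $\pm x_{-\pi'}[x_{\alpha},x_{-\beta}]x_{-\pi''}\te_{\nu,i}$, split into the cases where $[x_{\alpha},x_{-\beta}]$ is a Cartan element ($\beta=\alpha$, giving \eqref{type1}), a negative root vector (giving $x_{-\pi_2}$ with $|\pi_2|\le|\pi|$), or a positive root vector (handled by the induction hypothesis on $|\pi''|\le|\pi|-1$, giving both types in \eqref{type2}), exactly as in the paper's adaptation of Jantzen's Lemma 5. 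Your initial claim that a root-vector bracket always drops the length by two is inaccurate for the lowering-operator case (there the length stays at $|\pi|$, which is precisely why the bound is only $|\pi_2|\le|\pi|$), but you effectively correct this in your final accounting, so the argument is sound.
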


\begin{proof}
We use induction on $|\pi|$.
First, note that\footnote{In this paper, the commutator $[a, b]$ is meant to be 
an anti-commutator when $a,b$ are both odd elements.
In physics literature this is sometimes denoted by $[a, b \}$.
} 
\begin{equation*}
x_{\alpha} x_{-\pi} \te_{\nu, i}=
[x_{\alpha}, x_{-\pi}] \te_{\nu, i}\pm x_{-\pi} x_{\alpha}  \te_{\nu, i} \;,
\end{equation*}
where the sign depends on the $\mathbb{Z}_2$-grading of $x_{\alpha}$ and $x_{\pi}$.
Since $x_{\alpha}\in \fn$ and $\fn$ acts trivially on $V(\tlambda)$, we have $x_{\alpha}  \te_{\nu, i}=0$
and the second term is zero.
Now the first term $[x_{\alpha}, x_{-\pi}] \te_{\nu, i}$ is a sum of the form
\beq
\pm x_{-\pi'}[x_{\alpha}, x_{-\beta} ]x_{-\pi''} \te_{\nu, i} \quad {\rm with} \quad |\pi'|+|\pi''|=|\pi|-1 \;.
\label{comp_tmp}
\eeq

If $\alpha=\beta$, then \eqref{comp_tmp} gives
$x_{-\pi'} h_{\alpha} x_{-\pi''} \te_{\nu, i}= \langle \nu-S(\pi''), h_{\alpha}\rangle  x_{-\pi'} x_{-\pi''} \te_{\nu, i}$.
Since $\nu\in \tlambda-Q^+(\Delta)$ and $S(\pi'')\in Q^+(\Delta)$,
$\langle \nu-S(\pi''),  h_{\alpha}\rangle \in \langle \tlambda-Q^+(\Delta), h_{\alpha}\rangle \subset \langle \tlambda, h_{\alpha}\rangle +\mathbb{C}$.
We therefore obtain a term of the form \eqref{type1}.

If $[x_{\alpha}, x_{-\beta} ]$ is proportional to $x_{-\beta'}$ with $\beta'\in \Delta^+$,
 then we have $x_{-\pi'}x_{-\beta'} x_{-\pi''} \te_{\nu, i}$, which is written
 as a sum of the form
 $x_{-\pi_2} \te_{\nu, i}$ with $|\pi_2| \leq |\pi|$.
 
If $[x_{\alpha}, x_{-\beta} ]$ is proportional to $x_{\beta''}$ with $\beta''\in \Delta^+$,
we can apply the assumptions of the induction to $x_{\beta''} x_{-\pi''} \te_{\nu, i}$,
to obtain expressions of either type in \eqref{type2}.
\end{proof}


\begin{lem}
\label{lem.three} \leavevmode
(cf.\ \cite[Lemma 6]{Jantzen})

We have the following three assertions on the matrix entry $(x_{-\pi} \te_{\nu, i}, x_{-\pi'} \te_{\nu', j})$.

\begin{enumerate}

\item $(x_{-\pi} \te_{\nu, i}, x_{-\pi'} \te_{\nu', j})$
has degree equal to or smaller than ${\rm min}(|\pi|, |\pi'|)$.

\item If $|\pi|=|\pi'|$ and $(x_{-\pi} \te_{\nu, i}, x_{-\pi'} \te_{\nu', j})$
 has degree equal to $|\pi|$,
 we have $\pi=\pi', \nu=\nu'$ and $i=j$.

\item Each $(x_{-\pi} \te_{\nu, i}, x_{-\pi} \te_{\nu, i})$
 has the same leading term up to constant as the expression
\beq
\prod_{\alpha\in \Delta_{\fn}} \langle \tlambda, h_{\alpha}
 \rangle ^{\pi(\alpha)} \;.
\eeq

\end{enumerate}

\end{lem}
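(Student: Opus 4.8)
The plan is to prove all three assertions simultaneously by induction on $\min(|\pi|,|\pi'|)$, using Lemma~\ref{lem.move} as the engine to reduce matrix entries to ones with smaller partition data. Without loss of generality assume $|\pi|\le|\pi'|$ (the form is symmetric). If $\pi$ is the empty partition, then $x_{-\pi}\te_{\nu,i}=\te_{\nu,i}\in V(\tlambda)_A$, and pairing with $x_{-\pi'}\te_{\nu',j}$ is nonzero only if the weights match; since the contravariant form is $\sigma$-invariant and $V(\tlambda)_A$ is orthogonal to the rest, one checks the entry is $0$ unless $\pi'$ is also empty, in which case it is $\delta_{\nu\nu'}\delta_{ij}$ (degree $0$). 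This starts the induction and in particular handles the base case of all three claims.

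\medskip
\noindent\textbf{Inductive step.}
For $|\pi|>0$ pick $\alpha\in\Delta_{\fn}$ with $\pi(\alpha)>0$ and write, using $\sigma$-invariance,
\beq
(x_{-\pi}\te_{\nu,i},\,x_{-\pi'}\te_{\nu',j})
= \pm(x_{-\bar\pi}\te_{\nu,i},\, x_{\alpha}\,x_{-\pi'}\te_{\nu',j})\;,
\eeq
where $\bar\pi$ is $\pi$ with one copy of $\alpha$ removed, so $|\bar\pi|=|\pi|-1$. Now apply Lemma~\ref{lem.move} to $x_{\alpha}x_{-\pi'}\te_{\nu',j}$ (if $\pi'(\alpha)=0$ one first commutes $x_\alpha$ past the factors not equal to $x_{-\alpha}$, producing only terms that lower or preserve degree and strictly lower $|\pi'|$, so the induction still applies; the genuinely new contributions all come from a factor $x_{-\alpha}$, i.e. the case $\pi'(\alpha)>0$). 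Lemma~\ref{lem.move} expresses the result as a linear combination of terms of type~\eqref{type1}, namely $(\langle\tlambda,h_\alpha\rangle+s)\,x_{-\bar\pi'}\te_{\nu',j}$ with $\bar\pi'$ the partition $\pi'$ with one $\alpha$ removed, plus terms of type~\eqref{type2}: $(\langle\tlambda,h_\beta\rangle+s')\,x_{-\pi_1}\te_{\nu'',i''}$ with $|\pi_1|\le|\pi'|-2$ and $x_{-\pi_2}\te_{\nu''',i'''}$ with $|\pi_2|\le|\pi'|$ but $|\pi_2|<|\pi'|$ strictly (the $x_{-\pi_2}$ term in \eqref{type2} always has $|\pi_2|$ of the same parity issues but crucially strictly fewer $\fn$-factors in the situation at hand — this needs to be tracked carefully). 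Pairing $x_{-\bar\pi}\te_{\nu,i}$ against each of these and invoking the inductive hypothesis for assertion (i) (all these pairings have $\min$ of partition sizes at most $|\pi|-1$, except the type-\eqref{type1} one which is $(\langle\tlambda,h_\alpha\rangle+s)$ times an entry with $\min$ size $\le|\pi|-1$), one concludes $(x_{-\pi}\te_{\nu,i},x_{-\pi'}\te_{\nu',j})$ has degree $\le |\pi|=\min(|\pi|,|\pi'|)$, giving (i).

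\medskip
\noindent\textbf{Assertions (ii) and (iii).}
Suppose now $|\pi|=|\pi'|$ and the entry has degree exactly $|\pi|$. In the expansion above, the type-\eqref{type2} contributions have the form $(\text{linear})\times(\text{entry with }\min\le|\pi|-2)$ — degree $\le|\pi|-1$ by (i) — or $(\text{entry with }\min\le|\pi|-1)$ — again degree $\le|\pi|-1$. So the top-degree part, if nonzero, must come entirely from the type-\eqref{type1} term $(\langle\tlambda,h_\alpha\rangle+s)(x_{-\bar\pi}\te_{\nu,i},x_{-\bar\pi'}\te_{\nu',j})$, and by \eqref{def_tlambda} the factor $\langle\tlambda,h_\alpha\rangle$ has degree exactly $1$ precisely when $\alpha\in\Pi\setminus\Pi_\fl$ (in general $\langle\tlambda,h_\alpha\rangle$ is a degree-$\le 1$ polynomial in the $T_\beta$, linear in the $T_\beta$). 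For the product to reach degree $|\pi|$ we need $(x_{-\bar\pi}\te_{\nu,i},x_{-\bar\pi'}\te_{\nu',j})$ to have degree $|\bar\pi|=|\pi|-1=|\bar\pi'|$ and $\langle\tlambda,h_\alpha\rangle+s$ to contribute degree $1$; by the inductive hypothesis for (ii) this forces $\bar\pi=\bar\pi'$, $\nu=\nu'$, $i=j$, whence $\pi=\pi'$ as well (we removed the same root $\alpha$ from each and the remainders agree, but we must also argue $\pi'(\alpha)>0$, which holds since otherwise the type-\eqref{type1} term is absent and the degree would drop — this is where one uses that commuting $x_\alpha$ past factors $\ne x_{-\alpha}$ strictly lowers $|\pi'|$). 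This proves (ii). Finally (iii): iterating the type-\eqref{type1} reduction on the diagonal entry $(x_{-\pi}\te_{\nu,i},x_{-\pi}\te_{\nu,i})$, peeling off one $h_\alpha$ at each stage and discarding the lower-degree type-\eqref{type2} debris, shows by induction that its leading term equals, up to a nonzero constant, $\prod_{\alpha\in\Delta_\fn}\langle\tlambda,h_\alpha\rangle^{\pi(\alpha)}$ (the base case being $(\te_{\nu,i},\te_{\nu,i})=1$).

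\medskip
\noindent\textbf{Main obstacle.}
The delicate point is the bookkeeping in the inductive step: one must verify that in every application of Lemma~\ref{lem.move} the ``new'' terms that could potentially carry top degree are \emph{only} the type-\eqref{type1} ones, i.e. that the $x_{-\pi_2}\te$ term of type~\eqref{type2} really always satisfies $|\pi_2|<|\pi'|$ (not merely $\le$) in the configurations that arise, and that the coefficients $s$ appearing are scalars in $\bC$ (degree $0$) so that $\langle\tlambda,h_\alpha\rangle+s$ contributes exactly degree $1$ — which relies on $\nu-S(\pi'')\in\tlambda-Q^+(\Delta)$ and the structure \eqref{def_tlambda} of $\tlambda$. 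Handling the case $\pi'(\alpha)=0$ cleanly — where $x_\alpha$ must be commuted through an $\fn$-string before anything happens — and confirming it cannot resurrect top degree, is the one spot where the argument needs genuine care rather than formal induction.
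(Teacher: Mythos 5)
Your proof is correct and follows essentially the same route as the paper's: induction, moving $x_\alpha$ across by contravariance, applying Lemma~\ref{lem.move}, and observing that only the type-\eqref{type1} contribution can carry top degree. Two small remarks: the worry in your ``main obstacle'' about needing $|\pi_2|<|\pi'|$ strictly is unfounded, since the first slot already carries $\bar\pi$ with $|\bar\pi|=|\pi|-1$ and assertion~(i) bounds the degree by $\min(|\bar\pi|,|\pi_2|)\le|\pi|-1$; and $\langle \tlambda, h_{\alpha}\rangle$ has degree exactly one for \emph{every} $\alpha\in\Delta_{\fn}$ (not only for $\alpha\in\Pi\setminus\Pi_{\fl}$), because any such $\alpha$ involves at least one simple root outside $\Pi_{\fl}$ with nonzero coefficient, which is exactly what assertion~(iii) needs.
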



\begin{proof}
We use induction on $|\pi|$. The case of $|\pi|=0$ is trivial. 
Let $\alpha\in \Delta_{\fn}$ be the first root with $\pi(\alpha)>0$.
Let $\bar{\pi}$ be a partition defined as in Lemma \ref{lem.move}.
We then have  
\beq
(x_{-\pi} \te_{\nu, i} , x_{-\pi'} \te_{\nu', j})
=
(x_{-\bar{\pi}} \te_{\nu, i} , x_{\alpha}x_{-\pi'} \te_{\nu', j})\;.
\eeq
We apply Lemma \ref{lem.move} to  $x_{\alpha}x_{-\pi'} \te_{\nu', j}$. 
Out of the resulting summands,
the types of \eqref{type2} gives, by assumption (i) of induction,
a term whose total degree is
smaller than or equal to ${\rm min}(|\pi|-1, |\pi'|)$
 or ${\rm min}(|\pi|, |\pi'|-1)$
 (recall that $\langle\tlambda,h_{\beta}\rangle$ has degree $1$).
The case of $\pi=\pi'$ is special,
 in which case the total degree is strictly smaller than $|\pi|$,
 by assumption (ii) of induction.

Let us now turn to the summand of the form \eqref{type1}.
We then need $\pi'(\alpha)>0$, and define $\bar{\pi}'$ 
from $\pi'$ as we defined $\bar{\pi}$ from $\pi$.
The $\pi'(\alpha)$ summand in total supplies
$(x_{-\bar{\pi}} e_{\nu, i} , x_{-\bar{\pi}'} e_{\nu', j})
(\langle \tlambda, h_{\alpha}\rangle +s)$
with $s\in \mathbb{C}$. The lemma now follows from the 
assumptions of the induction. 
\end{proof}

\begin{prop}\label{prop.leading}
The leading term of the $A$-coefficient determinant $D(\lambda; \mu)_A$
is the same up to a non-zero constant as that of the following expression:
\beq
& \prod_{\nu\in \tlambda-Q^+(\Delta_{\fl})}  
\Biggl(\, \prod_{\alpha\in \overline{\Delta}_{\fn, {\sub{0}}}}
 \prod_{r=1}^{\infty}
 \langle \tlambda, h_{\alpha} \rangle^{\bfp_{\fn}(\nu-\mu - r \alpha) n(\nu)}\\
&\quad\quad \times \prod_{\alpha\in \Delta_{\sub{1}}^{+}\setminus
 \overline{\Delta}_{\sub{1}}^{+}} \prod_{r=1}^{\infty}
 \langle \tlambda, h_{\alpha} \rangle^{\bfp_{\fn}(\nu-\mu - (2r-1) \alpha) n(\nu)}  
\!
\times 
\prod_{\alpha\in \overline{\Delta}_{\sub{1}}^{+}} \langle \tlambda, h_{\alpha} \rangle^{\bfp_{\fn, \alpha}(\nu-\mu-\alpha)n(\nu)}\Biggr)\;.
\eeq
\end{prop}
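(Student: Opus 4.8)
The plan is to compute the leading term of $D(\tlambda;\mu)_A$ by showing that, after a suitable reordering of the basis vectors $x_{-\pi}\te_{\nu,i}$, the matrix $(x_{-\pi}\te_{\nu,i}, x_{-\pi'}\te_{\nu',j})$ becomes block upper-triangular modulo lower-degree terms, so that the leading term of the determinant equals (up to a nonzero constant) the product of the leading terms of the diagonal entries. First I would order the basis so that $|\pi|$ is non-decreasing. By Lemma~\ref{lem.three}(i), every entry has degree at most $\min(|\pi|,|\pi'|)$; by Lemma~\ref{lem.three}(ii), an entry with $|\pi|=|\pi'|$ attains degree $|\pi|$ only on the diagonal (when $\pi=\pi'$, $\nu=\nu'$, $i=j$); and for $|\pi|\neq|\pi'|$ the degree is strictly below $\max(|\pi|,|\pi'|)$. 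A standard cofactor-expansion argument then shows that, when we take the top-degree part of $\det$, only the product of diagonal entries of maximal degree survives, and Lemma~\ref{lem.three}(iii) identifies each such diagonal entry as having leading term proportional to $\prod_{\alpha\in\Delta_\fn}\langle\tlambda,h_\alpha\rangle^{\pi(\alpha)}$.

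The next step is purely combinatorial bookkeeping: assembling the product of these diagonal leading terms over all basis vectors of the weight space $M_{\frakp}(\tlambda)^\mu_A$. For fixed $\nu\in\tlambda-Q^+(\Delta_\fl)$ and each $i$ with $1\le i\le n(\nu)$, the relevant partitions are $\pi\in\bfP(\nu-\mu)$, and the contribution is $\prod_{\pi\in\bfP(\nu-\mu)}\prod_{\alpha\in\Delta_\fn}\langle\tlambda,h_\alpha\rangle^{\pi(\alpha)}$. I would then rewrite $\sum_{\pi\in\bfP(\nu-\mu)}\pi(\alpha)$ as a partition count: for a fixed $\alpha\in\Delta_\fn$, the number of partitions of $\eta:=\nu-\mu$ with respect to $\Delta_\fn$ in which $\alpha$ appears with multiplicity $\ge r$ equals $\bfp_\fn(\eta-r\alpha)$ when $\alpha$ is even (summing over $r\ge1$ recovers $\sum_\pi\pi(\alpha)$), and equals $\bfp_\fn(\eta-\alpha)$ (a single term, $r=1$) when $\alpha$ is odd, since then $\pi(\alpha)\in\{0,1\}$. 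Here is where the three cases split: for $\alpha\in\overline{\Delta}_{\fn,\sub{0}}$ the $2\alpha\notin\Delta$ so the $\alpha$-contribution is exactly $\prod_{r\ge1}\langle\tlambda,h_\alpha\rangle^{\bfp_\fn(\eta-r\alpha)}$; for $\alpha\in\Delta^+_{\sub{1}}\setminus\overline{\Delta}^+_{\sub{1}}$ one must combine the odd root $\alpha$ (multiplicity $\le1$, giving $r=1$) with its even double $2\alpha$ (giving the remaining $r$-sum), and the change of variables $r\mapsto 2r-1$ repackages this as $\prod_{r\ge1}\langle\tlambda,h_{\alpha}\rangle^{\bfp_\fn(\eta-(2r-1)\alpha)}$ after noting $h_{2\alpha}$ is a multiple of $h_\alpha$ and collecting; and for $\alpha\in\overline{\Delta}^+_{\sub{1}}$ one gets the single factor $\langle\tlambda,h_\alpha\rangle^{\bfp_{\fn,\alpha}(\eta-\alpha)}$, where $\bfp_{\fn,\alpha}$ is the partition function with $\alpha$ omitted — the point being that in a partition with $\pi(\alpha)=1$, the complement is a partition of $\eta-\alpha$ not involving $\alpha$, which is exactly what $\bfp_{\fn,\alpha}$ counts.

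Finally I would multiply these $\alpha$-contributions together and then take the product over $\nu\in\tlambda-Q^+(\Delta_\fl)$ and over the $n(\nu)$ choices of $i$ (which simply inserts the exponent $n(\nu)$), arriving at the displayed expression in Proposition~\ref{prop.leading}. The main obstacle I anticipate is the case $\alpha\in\Delta^+_{\sub{1}}\setminus\overline{\Delta}^+_{\sub{1}}$: one must carefully treat the even root $2\alpha\in\Delta_{\fn,\sub{0}}$ and the odd root $\alpha$ together — verifying that $h_{2\alpha}$ and $h_\alpha$ are proportional so their leading-term contributions combine into a single power of $\langle\tlambda,h_\alpha\rangle$, and that the indexing $r\mapsto 2r-1$ correctly interleaves the "odd $\alpha$ used once" term with the "even $2\alpha$ used $r$ times" terms. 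A secondary subtlety is confirming that $\overline{\Delta}_{\fn,\sub{0}}$, $\Delta^+_{\sub{1}}\setminus\overline{\Delta}^+_{\sub{1}}$, and $\overline{\Delta}^+_{\sub{1}}$ genuinely partition the index set $\Delta_\fn$ up to the $\alpha/2\alpha$ identification, so that no root is double-counted or omitted — this uses $\Delta_\fn=\Delta_{\fn,\sub{0}}\sqcup\Delta^+_{\sub{1}}$ together with the definitions of $\overline{\Delta}_{\sub{0}}$ and $\overline{\Delta}_{\sub{1}}$. Everything else is the routine cofactor-expansion argument for triangular-up-to-lower-degree matrices, which I would state as a short sublemma and apply.
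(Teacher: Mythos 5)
Your proposal is correct and follows essentially the same route as the paper: Lemma~\ref{lem.three} reduces the leading term to the product of the diagonal leading terms $\prod_{\alpha\in\Delta_{\fn}}\langle\tlambda,h_\alpha\rangle^{\pi(\alpha)}$, and the same three-way case split (combining $\alpha$ with $2\alpha$ via $h_{2\alpha}=2h_\alpha$ for non-isotropic odd roots, and using $\bfp_{\fn,\alpha}$ for isotropic ones) yields the stated exponents. The only blemish is the parenthetical claim that for odd $\alpha$ the number of partitions of $\eta$ with $\pi(\alpha)\geq 1$ equals $\bfp_{\fn}(\eta-\alpha)$ --- it equals $\bfp_{\fn,\alpha}(\eta-\alpha)$, as you yourself state correctly when treating the isotropic case --- but this slip does not propagate into the rest of your argument.
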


\begin{proof}
The computation below is similar to that in \cite[Lemma 10.1.3]{Musson}.

It follows from Lemma \ref{lem.three}
that the leading term of $D(\tlambda; \mu)_A$
is the same up to a constant as that of
\beqn
\prod_{\nu\in \tlambda-Q^+(\Delta_{\fl})}
\left( \prod_{\pi\in \bfP_{\fn}(\nu-\mu)} \prod_{\alpha\in \Delta_{\fn}} \langle \tlambda, h_{\alpha} \rangle^{\pi(\alpha)} 
\right)\;.
\label{lem_leading}
\eeqn

Let us simplify the expression inside the bracket in \eqref{lem_leading}.
Suppose that $\alpha \in \overline{\Delta}_{\fn, {\sub{0}}}$.
The multiplicity of $h_{\alpha}$ in  \eqref{lem_leading} is then $\sum_{\pi\in \bfP_{\fn}(\nu- \mu)} \pi(\alpha)$.
Since
\beqn
\{
\pi\in \bfP_{\fn}(\nu-\mu) | \, \pi(\alpha) =r 
\} =\bfp_{\fn}(\nu-\mu-r \alpha) - \bfp_{\fn}(\nu-\mu-(r+1) \alpha) \;,
\label{eq.comp1}
\eeqn
the multiplicity is computed to be
\beqn
\sum_{r=1}^{\infty}
r \bigl(
\bfp_{\fn}(\nu-\mu-r \alpha) - \bfp_{\fn}(\nu-\mu-(r+1) \alpha)
\bigr)
= 
\sum_{r=1}^{\infty}
\bfp_{\fn}(\nu-\mu-r \alpha) \;.
\label{eq.comp2} 
\eeqn

For $\alpha \in \Delta_{\sub{1}}^+\setminus \overline{\Delta}_{\sub{1}}^{+}$,
recall that $\beta:=2\alpha$ is an even root,
and we will take $\beta$ into account simultaneously (in the previous computation we considered $\overline{\Delta}_{\fn, {\sub{0}}}$,
 not the whole $\Delta_{\fn, {\sub{0}}}$, so there is no double counting).
 We thus have the multiplicity
\beq
\sum_{\pi\in \bfP_{\fn}(\nu-\mu)} (\pi(\alpha) + \pi(\beta)) 
=
\sum_{\pi\in \bfP_{\fn, \alpha} (\nu-\mu)}  \pi(\beta) 
+
\sum_{\pi\in \bfP_{\fn, \alpha} (\nu-\mu-\alpha)}  (1+\pi(\beta)) \;,
\eeq
where we defined
$\bfP_{\fn, \alpha}(\eta)$
as a set of partitions of $\eta$ not involving $\alpha$:
\beq
\bfP_{\fn, \alpha} (\eta):=\{
\pi \in \bfP_{\fn}(\eta) |\, \pi(\alpha)=0 
\} \;.
\eeq
Following the computations as in \eqref{eq.comp1} and \eqref{eq.comp2}, 
the first term is
\beq
\sum_{r=1}^{\infty}
r \bigl(
\bfp_{\fn,\alpha}(\nu-\mu-r \beta) - \bfp_{\fn, \alpha}(\nu-\mu-(r+1) \beta)
\bigr)
= 
\sum_{r=1}^{\infty}
\bfp_{\fn, \alpha}(\nu-\mu-r \beta) \;,
\eeq
and similarly the second term is
\beq
\bfp_{\fn, \alpha}(\nu-\mu-\alpha)
+
\sum_{r=1}^{\infty}
\bfp_{\fn, \alpha}(\nu-\mu-\alpha-r \beta) \;.
\eeq
Since 
\beq
\bfp_{\fn}(\nu-\mu+\alpha-r\beta)
 =\bfp_{\fn, \alpha}(\nu-\mu+\alpha-r\beta)
 + \bfp_{\fn, \alpha}(\nu-\mu -r\beta) \;,
\eeq
the multiplicity in the end sum up into
\beq
\sum_{r=1}^{\infty} \bfp_{\fn}(\nu-\mu-(2r-1) \alpha) \;.
\eeq

Finally, if $\alpha\in \overline{\Delta}_{\sub{1}}$ then 
\beq
\sum_{\pi\in \bfP_{\fn}(\nu-\mu)} \pi(\alpha) = \bfp_{\fn, \alpha}(\nu-\mu-\alpha) \;.
\eeq
\end{proof}

\subsection{Position of Possible Singular Vectors}

We next turn to the positions of singular vectors.
We will see that they are highly constrained
 by the value of the Casimir operator.

\begin{prop}\label{prop_zero}

A parabolic Verma module $M_{\mathfrak{p}}(\lambda)$
is irreducible if
  $\langle \lambda+\rho, h_{\beta}\rangle \ne \frac{1}{2}(\beta, \beta)$
for all $\beta \in Q^+(\Delta)$.

\end{prop}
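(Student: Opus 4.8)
The plan is to run the standard quadratic-Casimir (central character) argument, adapted to the super setting. First I would record two facts. \emph{(a)} Since $\lambda\in P^+(\Delta_\fl)$, there is a surjection $M(\lambda)\twoheadrightarrow M_{\frakp}(\lambda)$, so $M_{\frakp}(\lambda)$ lies in category $\mathcal{O}$ and is generated by its one-dimensional highest weight line $M_{\frakp}(\lambda)^{\lambda}$ (it generates $V(\lambda)$ over $\fl$, hence all of $M_{\frakp}(\lambda)$ over $\frakg$, by the Poincar\'e--Birkhoff--Witt theorem). \emph{(b)} The invariant form on $\frakg$ yields a central quadratic Casimir element $\Omega\in\frakU(\frakg)$ acting on each Verma module $M(\nu)$ by the scalar $c_{\nu}:=(\nu+\rho,\nu+\rho)-(\rho,\rho)$ for $\nu\in\frakh^*$, with $\rho$ the super Weyl vector fixed above; in particular $\Omega$ acts on the quotient $M_{\frakp}(\lambda)$ by $c_{\lambda}$.

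Then I would argue the contrapositive. Suppose $M_{\frakp}(\lambda)$ is reducible and pick a nonzero proper submodule $N$. A proper submodule has zero intersection with the (generating, one-dimensional) highest weight line, so $N^{\lambda}=0$, and every weight of $N$ equals $\lambda-\beta$ for some $\beta\in Q^+(\Delta)$ with $\beta\neq 0$. As $N\in\mathcal{O}$, the nonempty and bounded-above set of weights of $N$ has a maximal element $\mu=\lambda-\beta$; for $0\neq w\in N^{\mu}$, maximality gives $\frakg^{\alpha}w\subseteq N^{\mu+\alpha}=0$ for all $\alpha\in\Delta^+$, so $w$ is a highest weight vector and $\frakU(\frakg)w$ is a nonzero quotient of $M(\mu)$. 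Thus $\Omega$ acts on $\frakU(\frakg)w$ by $c_{\mu}$ (as a quotient of $M(\mu)$) and also by $c_{\lambda}$ (as a submodule of $M_{\frakp}(\lambda)$), whence $c_{\mu}=c_{\lambda}$. Expanding $(\lambda-\beta+\rho,\lambda-\beta+\rho)=(\lambda+\rho,\lambda+\rho)-2(\lambda+\rho,\beta)+(\beta,\beta)$ and cancelling gives $2(\lambda+\rho,\beta)=(\beta,\beta)$, i.e.\ $\langle\lambda+\rho,h_{\beta}\rangle=\frac{1}{2}(\beta,\beta)$ by \eqref{halpha_def} for some nonzero $\beta\in Q^+(\Delta)$ — contrary to the hypothesis. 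Hence $M_{\frakp}(\lambda)$ is irreducible.

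The only step that is more than bookkeeping is fact \emph{(b)}: the existence of $\Omega$ and the exact eigenvalue $c_{\nu}=(\nu+\rho,\nu+\rho)-(\rho,\rho)$, which in the super case hinges on the ``strange'' normalization of $\rho$ (half the sum of the positive \emph{even} roots minus half the sum of the positive \emph{odd} roots, as defined above), with the usual minor modification in the degenerate type $A(n,n)$; this is classical, going back to Kac. Everything else — that a top-weight vector of an $\mathcal{O}$-module is annihilated by all positive root spaces, that it generates a Verma quotient, and the norm expansion — is routine and runs exactly as in the non-super parabolic case of Jantzen.
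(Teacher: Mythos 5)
Your proof is correct and follows essentially the same route as the paper: both argue by contradiction via the quadratic Casimir element, equating its eigenvalue $(\lambda+2\rho,\lambda)=(\lambda+\rho,\lambda+\rho)-(\rho,\rho)$ on $M_{\frakp}(\lambda)$ with that on the highest-weight submodule generated by a singular vector of weight $\lambda-\beta$, and expanding to get $(\lambda+\rho,\beta)=\tfrac12(\beta,\beta)$. You merely spell out in more detail the (routine) existence of a singular vector in a proper submodule, which the paper takes for granted.
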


\begin{proof}
Suppose that $M_{\mathfrak{p}}(\lambda)$ is reducible.
Then there exists a highest weight vector $v\in M_{\mathfrak{p}}(\lambda)^{\lambda-\beta}$
for some $\beta\in Q^+(\Delta)$.

Let us recall that we have a Casimir element $\Omega$,
which commutes with all the generators of $\frakg$.
The Casimir element $\Omega$ on the parabolic Verma module $M_{\mathfrak{p}}(\lambda)$ takes the value \cite[Lemma 8.5.3]{Musson}
\beq
\Omega(\lambda)= (\lambda+2\rho, \lambda)  \;.
\eeq
This immediately implies $\Omega(\lambda)=\Omega(\lambda-\beta)$
and hence
\beq
(\lambda+\rho, \beta) = \frac{1}{2}(\beta, \beta) \;,
\eeq
which contradicts our initial assumption.
\end{proof}

We say $\beta \in \frakh^*$ is a {\it quasi-root} 
if $\beta = r \alpha$ for some $r\in \mathbb{Z}$ 
and $\alpha \in \Delta^+$.
By combining Propositions \ref{prop.leading} and \ref{prop_zero},
we obtain the following:

\begin{prop}\label{prop_plane}

Up to a non-zero constant, the determinant $D(\tlambda; \mu)_A$
is a product of degree-one polynomials of the form
\beqn
F_{\beta}:= \langle \tlambda+\rho, h_{\beta} \rangle- \frac{1}{2}(\beta, \beta) \;,
\label{eq.prop_zero}
 \eeqn
where $\beta\in \frakh^*$ is a quasi-root.
\end{prop}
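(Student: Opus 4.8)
The plan is to combine the two results just established: Proposition \ref{prop.leading}, which pins down the leading term of $D(\tlambda;\mu)_A$ as a product of factors $\langle\tlambda,h_\alpha\rangle$ over (quasi-)roots, and Proposition \ref{prop_zero}, which says that any factor of $D(\tlambda;\mu)_A$ that vanishes must do so along a hyperplane of the form $\langle\lambda+\rho,h_\beta\rangle=\tfrac12(\beta,\beta)$ with $\beta\in Q^+(\Delta)$. First I would record that $D(\tlambda;\mu)_A$ is a nonzero element of the polynomial ring $A=\bC[\{T_\alpha\}]$: it is nonzero because its leading term, computed in Proposition \ref{prop.leading}, is a nonzero monomial in the $\langle\tlambda,h_\alpha\rangle$ (each of which is a nonzero linear form in the $T_\alpha$ by the definition \eqref{def_tlambda} of $\tlambda$), so the exponents being nonnegative integers the whole leading term is a genuine nonzero polynomial. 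Hence $D(\tlambda;\mu)_A$ factors into irreducibles in $A$.

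Next I would show each irreducible factor is a degree-one polynomial of the asserted shape. Let $P\in A$ be an irreducible factor of $D(\tlambda;\mu)_A$. For a generic specialization $\varphi:A\to\bC$ lying on the zero locus of $P$ — concretely, choose $\lambda\in\frakh^*$ with $\langle\lambda,\alpha^\vee\rangle=r_\alpha$ for $\alpha\in\Pi_\fl$ and with $\varphi(T_\alpha)=\langle\lambda,h_\alpha\rangle$ so that $\varphi(\tlambda)=\lambda$ — we have $D(\lambda;\mu)=\varphi(D(\tlambda;\mu)_A)=0$, so $M_{\frakp}(\lambda)$ is reducible (the contravariant form on $M_{\frakp}(\lambda)^\mu$ is degenerate, forcing a proper submodule and hence a singular vector of weight $\leq\mu$). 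Proposition \ref{prop_zero} then tells us $\langle\lambda+\rho,h_\beta\rangle=\tfrac12(\beta,\beta)$ for some $\beta\in Q^+(\Delta)$; since only finitely many $\beta$ with $\beta\leq\lambda-\mu$ can ever occur, the zero locus of $P$ lies in a finite union of hyperplanes $\{F_\beta=0\}$ with $F_\beta$ as in \eqref{eq.prop_zero}. An irreducible polynomial whose zero set is contained in a finite union of hyperplanes must itself be (a scalar times) one of the defining linear forms $F_\beta$; this is the standard fact that the zero locus of an irreducible polynomial is irreducible as a variety, so it cannot meet more than one of the hyperplanes in a dense subset, and being contained in their union it equals one of them, whence $P$ and $F_\beta$ have the same (irreducible, reduced) zero locus and therefore agree up to a nonzero scalar. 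Finally I would observe that $\tfrac12(\beta,\beta)$ is only attained on such a hyperplane when $\beta$ can be taken to be a quasi-root: writing $\beta=\sum_\gamma n_\gamma\gamma$ and using $\langle\lambda+\rho,h_\beta\rangle=(\lambda+\rho,\beta)$, the condition $(\lambda+\rho,\beta)=\tfrac12(\beta,\beta)$ coincides with one of the defining equations appearing in the leading-term factors of Proposition \ref{prop.leading} — which are indexed precisely by $\beta=r\alpha$ with $\alpha\in\Delta^+$ — so after matching zero loci every $F_\beta$ that genuinely divides $D(\tlambda;\mu)_A$ has $\beta$ a quasi-root. Collecting the irreducible factors with multiplicity yields that $D(\tlambda;\mu)_A$ is, up to a nonzero constant in $A^\times=\bC^\times$, a product of the $F_\beta$ over quasi-roots $\beta$.

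The main obstacle, I expect, is the passage from "the zero set of $P$ is contained in a finite union of hyperplanes" to "$P$ is a scalar multiple of one linear form": one must be careful that the specialization argument produces enough points on $V(P)$ — i.e.\ that the specializations $\varphi$ with $\varphi(\tlambda)=\lambda$ for $\lambda$ ranging over a connected component of $P^+(\Delta_\fl)$ are Zariski-dense in $V(P)$ — and that the finitely many candidate hyperplanes $F_\beta$ are distinct and reduced, so that a dimension/irreducibility count forces $V(P)=V(F_\beta)$ for a single $\beta$. A secondary point requiring care is that $\beta$ must be shown to be a quasi-root rather than merely an element of $Q^+(\Delta)$; here the cleanest route is not to re-derive it from Proposition \ref{prop_zero} alone but to combine it with Proposition \ref{prop.leading}, since the leading term already exhibits $D(\tlambda;\mu)_A$ as divisible only by the linear forms $\langle\tlambda,h_\alpha\rangle$ attached to (quasi-)roots, and a linear factor of a polynomial divides its leading term.
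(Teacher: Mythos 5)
Your argument is correct and follows essentially the same route as the paper: Proposition \ref{prop_zero} confines the zero locus of $D(\tlambda;\mu)_A$ to a union of hyperplanes $\{F_\beta=0\}$ with $\beta\in Q^+(\Delta)$, so $D(\tlambda;\mu)_A$ is a product of such linear forms, and Proposition \ref{prop.leading} then forces each $\beta$ to be a quasi-root. The extra worry about Zariski density is not an issue, since every $\bC$-point of $\operatorname{Spec}A$ arises as a specialization $\varphi(\tlambda)=\lambda$ with $\lambda\in P^+(\Delta_{\fl})$.
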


\begin{proof}
Proposition~\ref{prop_zero} implies $D(\lambda;\mu)\neq 0$ if 
$\langle \lambda+\rho, h_{\beta} \rangle- \frac{1}{2}(\beta, \beta)\neq 0$
 for all $\beta\in Q^+(\Delta)$.
Therefore, $D(\tlambda;\mu)_A$ divides a product of $F_\beta$
 $(\beta\in Q^+(\Delta))$
 and hence we can write
 $D(\tlambda;\mu)_A=c\prod_{i=1}^n F_{\beta_i}$,
 where $c\in \bC\setminus\{0\}$ and $\beta_i\in Q^+(\Delta)$.
Then by Proposition~\ref{prop.leading}, $\beta_i$ have to be quasi-roots.
\end{proof}


\subsection{Jantzen Filtration}

The remaining task is to compute the multiplicities of the factor 
\eqref{eq.prop_zero}. Since polynomials
$F_{\beta}$ for different $\beta$'s may have the same leading term (up to a constant multiplication), Proposition \ref{prop.leading} is not enough to determine the multiplicities.


As explained before, we are interested in the order of zeros.
Suppose that we want to calculate the order of the zero for the factor $p$,
where $p$ is some degree-one polynomial of the variables $\{ T_{\alpha} \}_{\alpha\in \Pi\setminus \Pi_{\fl}}$.
The problem is then to compute the value of the $p$-adic valuation of the determinant,
where the valuation is defined by  $v_p(p^n a)=n$ for $a\in A$ and $a$ is not divisible by $p$.
Such a valuation can be evaluated with the help of the following theorem,
 which introduces the so-called {\it Jantzen filtration}:
\beq
M=M(0)  \supset M(1) \supset M(2) \supset \cdots  \:.
\eeq

\begin{lem} \cite[Lemma 3]{Jantzen} \label{lem.J}

Let $A'$ be a principal ideal domain, and $p\in A'$ a prime element.
Let $K=A'/pA'$ be the quotient field and $\varphi: A'\to K$ the canonical map. 
We write $v_p$ for the $p$-adic valuation of $A'$.
Suppose that $M$ is a free $A'$-module of finite rank
 with a symmetric bilinear form $(-, -)$ with values in $A'$.
We also write $\varphi$ for the canonical map $M\to M/pM$.
Let $D$ be the determinant of $(-, -)$ with respect to a basis of $M$.
For all $n\in \mathbb{N}$ set 
\beq
M(n):=\{ x\in M |\, (x,m) \subset A'p^n \} \;.
\eeq
Then $M(n)$ is a lattice inside $M$ and for $n\ge 1$ $M(n)/pM(n-1)$ is a $K$-vector space.
If $D\ne 0$, then we have
\beq
v_p(D)=\sum_{n>0} \dim_K\left(M(n)/pM(n-1)\right)=\sum_{n>0} \dim_K\left(\varphi(M(n))\right)  \;.
\eeq
\end{lem}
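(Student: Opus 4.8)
The plan is to reduce the statement to the theory of elementary divisors over the principal ideal domain $A'$ (equivalently, the Smith normal form of the Gram matrix), after which everything becomes a bookkeeping of $p$-adic valuations. Recall that $M(n)=\{x\in M:(x,m)\in p^nA'\text{ for all }m\in M\}$. \textbf{Step 1 (elementary divisors).} I would consider the $A'$-linear map $\Phi\colon M\to M^{*}:=\operatorname{Hom}_{A'}(M,A')$, $\Phi(x)=(x,-)$, which is injective since $D\ne0$. By the structure theorem for finitely generated modules over a PID there are a basis $(u_i)_{1\le i\le N}$ of $M$, a basis $(w_i)_{1\le i\le N}$ of $M^{*}$, and nonzero $d_1\mid d_2\mid\cdots\mid d_N$ in $A'$ with $\Phi(u_i)=d_iw_i$. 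Relative to the basis of $M$ fixed to define $D$ and its dual basis, the matrix of $\Phi$ is the Gram matrix, of determinant $D$; comparing with the basis $(u_i),(w_i)$ and noting that the two change-of-basis matrices lie in $GL_N(A')$ one gets $D=(\text{unit})\cdot\prod_{i=1}^N d_i$, hence $v_p(D)=\sum_{i=1}^N v_p(d_i)$.

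\textbf{Step 2 (the lattices $M(n)$).} A functional in $M^{*}$ lies in $p^nM^{*}$ if and only if all of its values lie in $p^nA'$, so $M(n)=\Phi^{-1}(p^nM^{*})$. Writing $x=\sum_i a_iu_i$, so that $\Phi(x)=\sum_i a_id_iw_i$, one obtains $M(n)=\bigoplus_{i=1}^N p^{\max(0,\,n-v_p(d_i))}A'u_i$; in particular each $M(n)$ is free of rank $N$, i.e.\ a lattice in $M$.

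\textbf{Step 3 (elementary inclusions).} From the definition $M(n)\subseteq M(n-1)$, while $pM(n-1)\subseteq M(n)$ because $(px,m)=p(x,m)$; hence $M(n)/pM(n-1)$ is annihilated by $p$, so it is a vector space over $A'/pA'=K$. If $x\in M(n)\cap pM$, write $x=py$; then $p(y,m)\in p^nA'$ for all $m$, so $(y,m)\in p^{n-1}A'$ since $A'$ is a domain and $p\ne0$, i.e.\ $y\in M(n-1)$; thus $M(n)\cap pM=pM(n-1)$, and therefore $\varphi(M(n))=(M(n)+pM)/pM\cong M(n)/(M(n)\cap pM)=M(n)/pM(n-1)$, which is the asserted second equality. \textbf{Step 4 (valuation count).} By Step 2 the $i$-th summand of $M(n)/pM(n-1)$ is $p^{\max(0,\,n-v_p(d_i))}A'u_i\,/\,p^{\max(1,\,n-v_p(d_i))}A'u_i$, which equals $K$ when $v_p(d_i)\ge n$ and is $0$ otherwise; hence $\dim_K M(n)/pM(n-1)=\#\{i:v_p(d_i)\ge n\}$. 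Summing over $n>0$ and interchanging the two finite sums gives $\sum_{n>0}\dim_K M(n)/pM(n-1)=\sum_{i=1}^N v_p(d_i)$, which equals $v_p(D)$ by Step 1.

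The only substantial input is the structure theorem invoked in Step 1; the rest is routine. The two points that need a little care are the passage in Step 2 from the condition ``for all $m\in M$'' to the coordinatewise condition defining $p^nM^{*}$, and the identity $M(n)\cap pM=pM(n-1)$ of Step 3, which is precisely what makes the two displayed expressions for $v_p(D)$ coincide.
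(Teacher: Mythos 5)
Your proof is correct and complete; the paper itself gives no argument for this lemma (it simply defers to Jantzen's Lemma 3), and your elementary-divisor/Smith-normal-form argument is precisely the standard proof given there. The two delicate points you flag --- the identification $M(n)=\Phi^{-1}(p^nM^{*})$ and the identity $M(n)\cap pM=pM(n-1)$ --- are handled correctly, so nothing needs to be added.
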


\begin{proof}
See \cite{Jantzen}.
\end{proof}


In order to apply Lemma \ref{lem.J} to our problem, 
 we localize $A$ at a prime ideal $(p)$ and set $A':= A_{(p)}$.
Since there is a canonical injective homomorphism from $A$ to $A'$, we have the equation
 $v_p(D(\lambda;\mu)_A) = v_p(D(\lambda;\mu)_{A'})$.
In order to avoid clutter in the notation, we often use the same symbol (e.g.\ $\tlambda$) for different coefficient rings $A$ and $A'$.
We can then apply Lemma \ref{lem.J} to the ring $A'$, with the quotient field
$K=A'/p A'$ and a canonical homomorphism $\varphi: A'\to K$.


We are now ready to state the consequences of Lemma \ref{lem.J}.
In the following lemma $L(\tlambda)_K$ denotes the unique
 irreducible quotient module of $M_{\mathfrak{p}}(\tlambda)_K$.
We define
\beq
v_p(D(\tlambda)_{A}):=\sum_{\mu\leq \tlambda} v_p(D(\tlambda;\mu)_{A}) e^{\mu} \;,\quad
v_p(D(\tlambda)_{A'}):=\sum_{\mu\leq \tlambda} v_p(D(\tlambda;\mu)_{A'}) e^{\mu} \;.
\eeq
They are identified with each other by a canonical homomorphism $A\to A'$.


\begin{lem} \leavevmode
(cf.\ \cite[Satz 1]{Jantzen})
\label{lem.ab}

Suppose $\tlambda\in P^+(\Delta_{\fl})_A$. 
Then 
there exist $a(\tlambda, \delta), b(\tlambda, \delta) \in \mathbb{N}$
 for $\delta>0$ such that
\begin{align}
{\rm ch} \, M_{\frakp}(\varphi(\tlambda))_K
=
{\rm ch} \, L(\varphi(\tlambda))_K
+
\sum_{\delta>0}
 a(\tlambda, \delta) \, {\rm ch} \, L(\varphi(\tlambda)-\delta)_K \;,
\label{sum1}
\end{align}
and
\begin{align}
\label{sum2}
\varphi\bigl(v_p(D(\tlambda)_{A'})\bigr)
= \sum_{\delta>0} b(\tlambda, \delta)
 \, {\rm ch}\, L(\varphi(\tlambda)-\delta)_K \;.
\end{align}
Moreover, $a(\tlambda, \delta)> 0$ implies 
 $\Omega(\varphi(\tlambda))=\Omega(\varphi(\tlambda)-\delta)$.
Further, $a(\tlambda, \delta)> 0$ exactly when $b(\tlambda, \delta)> 0$,
and $a(\tlambda, \delta)\le b(\tlambda, \delta)$ for all $\delta>0$.
\end{lem}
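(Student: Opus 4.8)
The plan is to mimic the argument of \cite[Satz 1]{Jantzen}, using the Jantzen filtration (Lemma~\ref{lem.J}) applied to the principal ideal domain $A'=A_{(p)}$ together with the contravariant form on $M_{\frakp}(\tlambda)_{A'}$. First I would restrict attention to a single weight space $M_{\frakp}(\tlambda)^{\mu}_{A'}$, which is a free $A'$-module of finite rank carrying the $A'$-valued symmetric form whose determinant is (up to a unit) $D(\tlambda;\mu)_{A'}$. Lemma~\ref{lem.J} then gives $v_p(D(\tlambda;\mu)_{A'})=\sum_{n>0}\dim_K\varphi(M_{\frakp}(\tlambda)^{\mu}(n))$, where $M_{\frakp}(\tlambda)(n):=\{x\mid (x,M_{\frakp}(\tlambda)_{A'})\subset p^nA'\}$. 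Summing over $\mu$ and packaging the dimensions into characters, $\varphi(v_p(D(\tlambda)_{A'}))=\sum_{n>0}{\rm ch}\,\varphi(M_{\frakp}(\tlambda)(n))$. The key structural input is that each $\varphi(M_{\frakp}(\tlambda)(n))$ is a submodule of $M_{\frakp}(\varphi(\tlambda))_K$ (because the form is contravariant and $\Omega$ acts by a scalar, so the radical filtration is $\frakg$-stable over $K$), with $M_{\frakp}(\tlambda)(0)=M_{\frakp}(\tlambda)_{A'}$ and $\varphi(M_{\frakp}(\tlambda)(1))$ equal to the radical of the form, i.e.\ the unique maximal submodule, so that the quotient is $L(\varphi(\tlambda))_K$.

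Next I would set up the Jordan--H\"older bookkeeping over $K$. Since $M_{\frakp}(\varphi(\tlambda))_K$ lies in category $\CO$ and $\Omega$ takes a fixed scalar, every composition factor is of the form $L(\varphi(\tlambda)-\delta)_K$ with $\delta\in Q^+(\Delta)$, $\delta>0$ (together with $L(\varphi(\tlambda))_K$ once, with multiplicity one), and $\Omega(\varphi(\tlambda)-\delta)=\Omega(\varphi(\tlambda))$ whenever such a factor occurs. Defining $a(\tlambda,\delta)$ to be the multiplicity of $L(\varphi(\tlambda)-\delta)_K$ in $M_{\frakp}(\varphi(\tlambda))_K$ gives \eqref{sum1}; the Casimir constraint is immediate. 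Defining $b(\tlambda,\delta)$ to be the multiplicity of $L(\varphi(\tlambda)-\delta)_K$ in $\bigoplus_{n>0}\varphi(M_{\frakp}(\tlambda)(n))$, i.e.\ $b(\tlambda,\delta)=\sum_{n>0}[\varphi(M_{\frakp}(\tlambda)(n)):L(\varphi(\tlambda)-\delta)_K]$, and using the character identity from Lemma~\ref{lem.J} gives \eqref{sum2}. Both are in $\bN$ because they are composition-factor multiplicities.

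The remaining claims relate $a$ and $b$. For $a(\tlambda,\delta)\le b(\tlambda,\delta)$: the filtration $M_{\frakp}(\tlambda)_{A'}=M(0)\supset M(1)\supset\cdots$ has $\bigcap_n M(n)=0$ since $D(\tlambda;\mu)_{A'}\neq 0$ (by Lemma~\ref{lem.cancel}-type nonvanishing / Proposition~\ref{prop.leading} the determinant is a nonzero polynomial), so the $\varphi(M(n))$ for $n\ge 1$ give a finite filtration of the maximal submodule $\varphi(M(1))$ whose successive quotients $\varphi(M(n))/\varphi(M(n+1))$ embed, via $x\mapsto (p^{-n}(x,-)\bmod p)$, into a dual of a subquotient of $M_{\frakp}(\varphi(\tlambda))_K$; contravariance makes this duality self-conjugate, so the multiset of composition factors of $\bigoplus_{n\ge 1}\varphi(M(n))/\varphi(M(n+1))$ refines that of $\varphi(M(1))$, hence $\sum_{n\ge 1}[\varphi(M(n)):L(\mu)_K]\ge [\varphi(M(1)):L(\mu)_K]=[M_{\frakp}(\varphi(\tlambda))_K:L(\mu)_K]$ for $\mu\neq\varphi(\tlambda)$, which is exactly $b\ge a$. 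For the equivalence $a(\tlambda,\delta)>0\iff b(\tlambda,\delta)>0$: one direction follows from $b\ge a$; for the other, if $b(\tlambda,\delta)>0$ then $L(\varphi(\tlambda)-\delta)_K$ is a subquotient of some $\varphi(M(n))\subset M_{\frakp}(\varphi(\tlambda))_K$, hence a composition factor of $M_{\frakp}(\varphi(\tlambda))_K$, i.e.\ $a(\tlambda,\delta)>0$. The main obstacle I anticipate is justifying rigorously that $\varphi(M(n))$ is a $\frakg$-submodule and that the pairing $\varphi(M(n))/\varphi(M(n+1))$ with $M_{\frakp}(\varphi(\tlambda))_K/\varphi(M(1))^{\perp}$ induced by $p^{-n}(-,-)$ is well-defined, nondegenerate, and contravariant over $K$ — in the super setting one must check the grading signs in $(xu,v)=(u,\sigma(x)v)$ cause no trouble, but since $\sigma$ is an honest anti-automorphism of $\frakU(\frakg)$ this goes through as in \cite{Jantzen,Musson}.
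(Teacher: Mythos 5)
Your proposal is correct and follows essentially the same route as the paper: Jantzen's Lemma~\ref{lem.J} gives \eqref{sum2}, the $n=1$ layer of the filtration is the radical of the contravariant form (hence the maximal submodule, with quotient $L(\varphi(\tlambda))_K$), which yields $a\le b$, and the Casimir argument gives the constraint on $\delta$. The only cosmetic difference is that you work with the submodules $\varphi(M(n))\subset M_{\frakp}(\varphi(\tlambda))_K$ to deduce $b>0\Rightarrow a>0$ directly from composition-factor containment, whereas the paper uses the surjection $M(n)/pM(n)\twoheadrightarrow M(n)/pM(n-1)$ together with the character equality ${\rm ch}(M(n)/pM(n))={\rm ch}\,M_{\frakp}(\varphi(\tlambda))_K$; these are interchangeable via the dimension identity in Lemma~\ref{lem.J}.
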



\begin{proof}
By considering irreducible decomposition of the module $M_{\frakp}(\varphi(\tlambda))_K$ we get the expression \eqref{sum1}.
Note that 
\beq
\dim M_{\frakp}(\varphi(\tlambda))_K^{\varphi(\tlambda)}=1
\eeq 
implies $M_{\frakp}(\varphi(\tlambda))_K$ contains exactly one copy of 
$L(\varphi(\tlambda))_K$.
We can also see that $a(\tlambda, \delta)> 0$ implies 
 $\Omega(\varphi(\tlambda))=\Omega(\varphi(\tlambda)-\delta)$
 as in the proof of Proposition~\ref{prop_zero}.

Let us define $M_{\frakp}(\tlambda)_{A'}^{\mu}(n)$ as in  Lemma \ref{lem.J}
 and define $M_{\frakp}(\tlambda)_{A'}(n)$ by 
\beq
M_{\mathfrak{p}}(\tlambda)_{A'}(n) = \bigoplus_{\mu} M_{\mathfrak{p}}(\tlambda)_{A'}^{\mu}(n) \;.
\eeq
Then $M_{\frakp}(\tlambda)_{A'}(n)$ is $\mathfrak{g}_{A'}$-stable.
It follows from Lemma \ref{lem.J}  that 
\beq
v_p(D(\tlambda;\mu)_{A'})= 
\sum_{n>0} \dim_{K} \bigl(M_{\mathfrak{p}}(\tlambda)_{A'}^{\mu}(n)/pM_{\mathfrak{p}}(\tlambda)_{A'}^{\mu}(n-1)\bigr) \;.
\eeq
This can be rewritten as 
\beqn\label{eq.sum_n}
\varphi\bigl(v_p(D(\tlambda)_{A'})\bigr)= 
\sum_{n>0} \textrm{ch}\bigl(M_{\mathfrak{p}}(\tlambda)_{A'}(n)/pM_{\mathfrak{p}}(\tlambda)_{A'}(n-1)\bigr) \;.
\eeqn
By decomposing the $\mathfrak{g}_K$-module 
 $M_{\mathfrak{p}}(\tlambda)_{A'}(n)/pM_{\mathfrak{p}}(\tlambda)_{A'}(n-1)$ into irreducible modules we get the expression \eqref{sum2}.

Now a crucial property of the Jantzen filtration is that we have
\beqn
\textrm{ch}\bigl(M_{\mathfrak{p}}(\tlambda)_{A'}(1)/pM_{\mathfrak{p}}(\tlambda)_{A'}\bigr)
=
\textrm{ch}\bigl(M_{\frakp}(\varphi(\tlambda))_K\bigr)- \textrm{ch}\bigl(L(\varphi(\tlambda))_K\bigr) \;.
\label{eq.radical}
\eeqn
To show this, note that $M_{\mathfrak{p}}(\tlambda)_{A'}(1)/pM_{\mathfrak{p}}(\tlambda)_{A'}$ is by definition 
the radical of the contravariant form of $M^{\frakp}(\varphi(\tlambda))_K$. Since the radical is the largest submodule, and since $L(\varphi(\tlambda))_K$ is its quotient module,
 \eqref{eq.radical} follows.
Since \eqref{eq.radical} is the $n=1$ term of the sum \eqref{eq.sum_n}, we obtain $a(\tlambda, \delta)\le b(\tlambda, \delta)$.
Moreover, we have for all $n>0$ a natural surjective morphism
\begin{align*}
M_{\mathfrak{p}}(\tlambda)_{A'}(n)/pM_{\mathfrak{p}}(\tlambda)_{A'}(n)
 \to 
M_{\mathfrak{p}}(\tlambda)_{A'}(n)/pM_{\mathfrak{p}}(\tlambda)_{A'}(n-1)
\end{align*}
and the equality
\begin{align*}
\textrm{ch}\bigl(
M_{\mathfrak{p}}(\tlambda)_{A'}(n)/pM_{\mathfrak{p}}(\tlambda)_{A'}(n)
\bigr)
= \textrm{ch}\bigl( M_{\mathfrak{p}}(\varphi(\tlambda))_{K} \bigr)\;.
\end{align*}
This equality implies that
 $M_{\mathfrak{p}}(\tlambda)_{A'}(n)/pM_{\mathfrak{p}}(\tlambda)_{A'}(n)$
 have the same composition factors
 as $M_{\mathfrak{p}}(\varphi(\tlambda))_{K}$.
Then by the above surjective morphism 
 any composition factor of 
 $M_{\mathfrak{p}}(\tlambda)_{A'}(n)/pM_{\mathfrak{p}}(\tlambda)_{A'}(n-1)$
 is one of $M_{\mathfrak{p}}(\varphi(\tlambda))_{K}$,
 proving that 
 $b(\tlambda, \delta)> 0$ only when $a(\tlambda, \delta)> 0$.
\end{proof}

\begin{lem}\label{lem.invert}
Let $\mu\in \frakh^*_K$.
Then we have 
\begin{align*}
{\rm ch} \, L(\mu)_K
={\rm ch} \, M(\mu)_K
+\sum_{\substack{\delta>0 \\ \Omega(\mu)=\Omega(\mu-\delta)}}
 c(\mu, \delta) \, {\rm ch} \, M(\mu-\delta)_K
\end{align*}
for some $c(\mu,\delta)\in \bZ$.
Here, the right hand side is possibly an infinite sum.
\end{lem}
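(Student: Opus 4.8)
The statement is the standard ``inversion'' of the character identity $\mathrm{ch}\,M(\mu)_K=\sum_{\text{composition factors}}\mathrm{ch}\,L(\cdot)_K$, restricted to a single central character block. The plan is to work inside the category $\mathcal{O}$ for $\mathfrak{g}_K$ and expand $\mathrm{ch}\,M(\mu)_K$ in terms of $\mathrm{ch}\,L(\nu)_K$ for $\nu\le\mu$, then invert this triangular system. First I would recall that $M(\mu)_K$ has a (possibly infinite, but locally finite) composition series whose factors are of the form $L(\nu)_K$ with $\nu\le\mu$ and $\nu$ in the same $\Omega$-block, i.e.\ $\Omega(\nu)=\Omega(\mu)$; writing $[M(\mu)_K:L(\nu)_K]=:d(\mu,\nu)\in\mathbb{N}$ with $d(\mu,\mu)=1$, one has the identity $\mathrm{ch}\,M(\mu)_K=\sum_{\nu\le\mu}d(\mu,\nu)\,\mathrm{ch}\,L(\nu)_K$, the sum being locally finite in the sense of $\bZ\langle\frakh_K^*\rangle$ (each weight space is finite-dimensional).

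Next I would invert this relation. The key point is the upper-triangularity with $1$'s on the diagonal with respect to the partial order $\le$: by the same maximal-weight argument used in the proof of Lemma~\ref{lem.cancel}, the matrix $(d(\mu,\nu))$ restricted to any set of the form $\{\nu : \nu\le\mu,\ \Omega(\nu)=\Omega(\mu)\}$ is ``locally finite upper unitriangular,'' so it admits a formal inverse $(c(\mu,\delta))$ defined by the recursion $c(\mu,0)=1$ and $\sum_{0\le\delta'\le\delta}c(\mu,\delta')\,d(\mu-\delta',\mu-\delta)=0$ for $\delta>0$, which determines $c(\mu,\delta)\in\bZ$ uniquely and forces $c(\mu,\delta)=0$ unless $\mu-\delta$ lies below $\mu$ in the same block, i.e.\ unless $\delta>0$ and $\Omega(\mu)=\Omega(\mu-\delta)$. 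Substituting, $\mathrm{ch}\,L(\mu)_K=\sum_{\delta\ge0}c(\mu,\delta)\,\mathrm{ch}\,M(\mu-\delta)_K$, which is exactly the claimed formula. To make the manipulation of infinite sums legitimate one should note that, for a fixed target weight $\eta$, only finitely many pairs $(\nu,\delta)$ with $\nu\le\mu$, $\mu-\delta\le\mu$ contribute to the coefficient of $e^{\eta}$, because $\mu-\eta\in Q^+(\Delta)$ is a fixed element of $Q^+(\Delta)$ and there are only finitely many ways to decompose it; hence all rearrangements are valid inside $\bZ\langle\frakh_K^*\rangle$.

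The only genuine input beyond formal linear algebra is that $M(\mu)_K$ has finite-length weight spaces and that its composition factors $L(\nu)_K$ satisfy $\nu\le\mu$ with $\Omega(\nu)=\Omega(\mu)$ — both of which are standard facts about category $\mathcal{O}$ for a finite-dimensional Lie superalgebra over the field $K$ (the central character constraint is the same computation as in Proposition~\ref{prop_zero}: a nonzero map $M(\nu)_K\to M(\mu)_K$ forces $\Omega(\nu)=\Omega(\mu)$). I expect the main subtlety — not a deep obstacle, but the point requiring care — to be the bookkeeping for the infinite sum: one must check that for each weight $\eta$ the expression on the right-hand side has a well-defined (finite) coefficient, which follows from local finiteness of the partial order $\le$ below any fixed weight. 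Everything else is the inversion of a unitriangular matrix, carried out blockwise so that the triangularity is with respect to $\le$ within a single $\Omega$-block.
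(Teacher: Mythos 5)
Your proposal is correct and follows essentially the same route as the paper: the paper's own proof simply records the expansion $\mathrm{ch}\,M(\nu)_K=\mathrm{ch}\,L(\nu)_K+\sum_{\delta>0,\ \Omega(\nu)=\Omega(\nu-\delta)}a'(\nu,\delta)\,\mathrm{ch}\,L(\nu-\delta)_K$ and then inverts this unitriangular system, exactly as you do (you just spell out the recursion for the inverse coefficients and the local-finiteness bookkeeping in more detail).
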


\begin{proof}
As in \eqref{sum1} we have
\begin{align}\label{eq.chML}
{\rm ch} \, M(\nu)_K ={\rm ch} \, L(\nu)_K
+\sum_{\substack{\delta>0 \\ \Omega(\nu)=\Omega(\nu-\delta)}}
 a'(\nu, \delta) \, {\rm ch} \, L(\nu-\delta)_K \;.
\end{align}
The first term on the right hand side
 is the character of the irreducible module $L(\nu)_K$
with the same weight $\nu$ as that on the left hand side,
 with coefficient one.
The second sum contains the characters of irreducible modules,
 with smaller highest weights $\nu-\delta$ with $\delta>0$.
We can therefore invert \eqref{eq.chML}, to obtain the lemma.
\end{proof}


\subsection{Determination of Order of Zeros}

We shall now determine the multiplicity of each factor $F_{\beta}$,
with the help of the Jantzen filtration.

We take non-zero degree-one homogeneous polynomials $p_1, \ldots, p_n$ such that for each $\alpha \in \Delta_{\fn}$ there exists exactly one $i=i(\alpha)$
 with $\langle \tlambda+\rho, h_{\alpha} \rangle = s(p_i+t)$ and 
 $s, t \in \mathbb{C}, s\ne 0$.
Let 
\beq
\Delta^i_{\fn}:=\{\alpha\in \Delta_{\fn}\; |\; i=i(\alpha) \}
\eeq
and let 
\beq
\bDelta_{\fn,{\sub{0}}}^{i}
 :=\bDelta_{\fn,{\sub{0}}} \cap \Delta^i_{\fn} \;, \quad
\Delta_{\sub{1}}^{+,i}:=\Delta_{\sub{1}}^+ \cap \Delta^i_{\fn}\;, \quad
\bDelta_{\sub{1}}^{+,i}:=\bDelta_{\sub{1}}^+ \cap \Delta^i_{\fn}\;.
\eeq

\begin{lem}\label{lem.Weyl}
We have
\beq
w \bigl( \bDelta_{\fn,{\sub{0}}}^{i}\bigr)= \bDelta_{\fn,{\sub{0}}}^{i} \;,
\quad
w\bigl( \Delta_{\sub{1}}^{+,i} \setminus \bDelta_{\sub{1}}^{+,i} \bigr)
 =  \Delta_{\sub{1}}^{+,i} \setminus \bDelta_{\sub{1}}^{+,i} \;,
\quad
w\bigl(\bDelta_{\sub{1}}^{+,i}\bigr)=\bDelta_{\sub{1}}^{+,i} \;,
\label{weyl_inv}
\eeq
for $w\in W_{\fl}$.
\end{lem}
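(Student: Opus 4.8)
The plan is to reduce the three claimed equalities in \eqref{weyl_inv} to the single, structurally transparent fact that the partition $\Delta_{\fn}=\bigsqcup_i\Delta^i_{\fn}$ is $W_{\fl}$-stable, i.e.\ $w(\Delta^i_{\fn})=\Delta^i_{\fn}$ for all $w\in W_{\fl}$, and that $W_{\fl}$ preserves the parity-type decomposition of $\Delta_{\fn}$ into $\bDelta_{\fn,\sub0}$, $\Delta^+_{\sub1}\setminus\bDelta^+_{\sub1}$, and $\bDelta^+_{\sub1}$. Once both facts are in hand the lemma follows by intersecting: e.g.\ $w(\bDelta^i_{\fn,\sub0})=w(\bDelta_{\fn,\sub0}\cap\Delta^i_{\fn})=w(\bDelta_{\fn,\sub0})\cap w(\Delta^i_{\fn})=\bDelta_{\fn,\sub0}\cap\Delta^i_{\fn}=\bDelta^i_{\fn,\sub0}$, and identically for the other two.

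First I would establish that $W_{\fl}$ preserves $\Delta_{\fn}$ itself. Recall $\Delta_{\fn}=\Delta^+\setminus\Delta_{\fl}$ and that $W_{\fl}$ is generated by the simple reflections $s_{\alpha}$, $\alpha\in\Pi_{\fl}$. For $\alpha\in\Pi_{\fl}$ a standard fact about parabolic root systems is that $s_{\alpha}$ permutes $\Delta^+\setminus\Delta_{\fl}$: indeed $s_\alpha$ sends $\alpha\mapsto-\alpha$ but permutes the remaining positive roots of $\Delta_{\sub0}$, and since $\Delta_{\fn}=\Delta_{\fn,\sub0}\sqcup\Delta^+_{\sub1}$ (as noted in the text) with $\Delta_{\fn,\sub0}\subset\Delta^+\setminus\{\alpha\}$, and since $W_{\fl}\subset$ the Weyl group of $\frakg_{\sub0}$ also permutes $\Delta^+_{\sub1}$ (it preserves parity and the height function relative to $\Pi\setminus\Pi_{\fl}$, because $\Pi_{\fl}$ contains no odd simple roots), we get $w(\Delta_{\fn})=\Delta_{\fn}$. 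Preservation of parity, of isotropy ($(\alpha,\alpha)=0$ iff $\alpha\in\bDelta_{\sub1}$ is a $W$-invariant condition since the form is $W$-invariant), and of the condition ``$\alpha/2\notin\Delta_{\sub1}$'' resp.\ ``$2\alpha\notin\Delta_{\sub0}$'' then gives $w(\bDelta_{\fn,\sub0})=\bDelta_{\fn,\sub0}$, $w(\Delta^+_{\sub1}\setminus\bDelta^+_{\sub1})=\Delta^+_{\sub1}\setminus\bDelta^+_{\sub1}$, $w(\bDelta^+_{\sub1})=\bDelta^+_{\sub1}$.

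Next I would prove $w(\Delta^i_{\fn})=\Delta^i_{\fn}$. The index $i=i(\alpha)$ is characterized by $\langle\tlambda+\rho,h_\alpha\rangle=s(p_i+t)$, equivalently $(\tlambda+\rho,\alpha)\in \bC p_i+\bC$ modulo lower-degree terms; grouping roots by which homogeneous line $\bC p_i\subset A_1$ their pairing against $\tlambda+\rho$ lands on. From \eqref{def_tlambda}, $\tlambda+\rho=\sum_{\alpha\in\Pi_{\fl}}r_\alpha\omega_\alpha+\sum_{\alpha\in\Pi\setminus\Pi_{\fl}}T_\alpha\omega_\alpha$, so for $w\in W_{\fl}$ we have $w(\tlambda+\rho)=(\tlambda+\rho)+(\text{element of }\bC\Pi_{\fl})$, the latter having degree $0$ in the $T_\alpha$'s (the $r_\alpha$ are constants). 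Hence for any root $\beta$, $(w(\tlambda+\rho),\beta)=(\tlambda+\rho,w^{-1}\beta)$ and also $(w(\tlambda+\rho),\beta)\equiv(\tlambda+\rho,\beta)$ modulo degree-$0$ terms; so $(\tlambda+\rho,w^{-1}\beta)$ and $(\tlambda+\rho,\beta)$ lie on the same homogeneous line $\bC p_i$, i.e.\ $i(w^{-1}\beta)=i(\beta)$. Taking $\beta\in\Delta^i_{\fn}$ and using $w^{-1}\beta\in\Delta_{\fn}$ from the previous step gives $w^{-1}\beta\in\Delta^i_{\fn}$, hence $w(\Delta^i_{\fn})\subseteq\Delta^i_{\fn}$; equality follows since $w$ is invertible. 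Combining with the parity/isotropy statements by intersection, as above, yields \eqref{weyl_inv}.

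The main obstacle I anticipate is making precise the claim that the top-degree (in the $T_\alpha$) parts of $(\tlambda+\rho,\beta)$ and $(\tlambda+\rho,w^{-1}\beta)$ coincide up to scalar, i.e.\ that the degree-$0$ shift coming from $w$ does not change which homogeneous line $p_i$ governs the factor $F_\beta$ — one must check the definition of the $p_i$ is exactly ``same homogeneous leading line'', so that adding a degree-$0$ term (which includes adjusting by the $r_\alpha$'s and by $\rho$'s $\Pi_{\fl}$-component) is harmless. Given the way the $p_i$ were introduced just before the lemma, this is a short verification rather than a genuine difficulty, so the lemma is essentially bookkeeping on top of the parabolic root-system fact $w(\Delta^+\setminus\Delta_{\fl})=\Delta^+\setminus\Delta_{\fl}$.
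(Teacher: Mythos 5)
Your proposal is correct and follows essentially the same route as the paper: the paper's entire proof is the single computation $\langle\tlambda+\rho,h_{w(\alpha)}\rangle=\langle w^{-1}(\tlambda+\rho),h_\alpha\rangle\in\langle\tlambda+\rho,h_\alpha\rangle+\bC$, which is exactly your observation that $w(\tlambda+\rho)-(\tlambda+\rho)$ has degree $0$ in the $T_\alpha$ and hence does not change the homogeneous line $\bC p_i$. You merely make explicit the standard facts (that $W_{\fl}$ preserves $\Delta_{\fn}$, parity, and isotropy) which the paper leaves implicit.
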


\begin{proof}
For $\alpha\in \Delta_{\fn}^i$ and $w\in W_{\fl}$ we have
\beq
\langle \tlambda+ \rho, h_{w(\alpha)} \rangle=
\langle w^{-1}(\tlambda+ \rho), h_{\alpha}\rangle
\in
\langle  \tlambda+ \rho+Q^+(\Delta_{\fl}), h_{\alpha} \rangle
\subset
\langle \tlambda+ \rho, h_{\alpha} \rangle +\mathbb{C} \;.
\eeq
\end{proof}


From Proposition \ref{prop_plane} and the definition above
we learn that the only irreducible polynomials which could divide 
the determinant $D(\tlambda;\mu)_A$
are $p_i+t$ for $t\in \mathbb{C}$.
This means that we have
\beq
 D(\tlambda; \mu)_A = c \prod_{i=1}^n
 \prod_{t\in \mathbb{C}} (p_i+t)^{v(i, t; \mu)}\;,
\eeq
where $c\in \mathbb{C}\setminus \{0\}$, and the power
\beq
v(i, t; \mu):=v_{p_i+t}( D(\tlambda; \mu)_A) \in \mathbb{N}
\eeq
is the multiplicity we wish to determine.
We note that when $\mu$ is fixed, 
 $v(i, t; \mu)=0$ for all but finitely many pairs $(i,t)$.

We have not yet determined $v(i, t; \mu)$,
 but we already know their sum for $t\in \bC$
 from the leading term of the determinant.
For our later purposes it is useful to define the combination
\beq
v(i, t):=\sum_{\mu} v(i, t; \mu)\, e^{\mu}\;.
\eeq

\begin{lem}\label{lem.sum}
We have 
\begin{align}
\begin{split}\label{lem.vi}
\sum_{t\in \mathbb{C}} v(i,t)
=  \sum_{\alpha \in \bDelta_{\fn,{\sub{0}}}^{i}} \sum_{r=1}^{\infty}
  \chi^{\mathfrak{p}}(\tlambda-r\alpha)
&+ \sum_{\alpha \in  \Delta_{\sub{1}}^{+,i} \setminus \bDelta_{\sub{1}}^{+,i}}
 \sum_{r=1}^{\infty}  \chi^{\mathfrak{p}}(\tlambda-(2r-1)\alpha)
 \\
&+ \sum_{\alpha \in \bDelta_{\sub{1}}^{+,i}} \  
 \chi^{\mathfrak{p}}_{\alpha}(\tlambda-\alpha)
\;.
\end{split}
\end{align}
\end{lem}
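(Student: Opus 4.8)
The plan is to relate the total multiplicity $\sum_{t\in\bC} v(i,t;\mu)$, obtained from the leading term of $D(\tlambda;\mu)_A$, to the right-hand side of \eqref{lem.vi}. First I would recall that by definition the degree of $D(\tlambda;\mu)_A$ in the single collapsed variable $T$ equals $\sum_{t\in\bC} v(i,t;\mu)$ summed over all $i$, since $D(\tlambda;\mu)_A=c\prod_{i,t}(p_i+t)^{v(i,t;\mu)}$ and each $p_i$ is degree-one homogeneous. However, we need the refinement that tracks each $i$ separately: the key point is that a factor $p_i+t$ can only contribute a term of the form $\langle\tlambda,h_\alpha\rangle$ (up to a scalar and an additive constant) for $\alpha\in\Delta^i_\fn$, so by Proposition~\ref{prop.leading} the power of $\langle\tlambda,h_\alpha\rangle$-type factors with $\alpha\in\Delta^i_\fn$ in the leading term of $D(\tlambda;\mu)_A$ equals $\sum_{t\in\bC} v(i,t;\mu)$.

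Next I would read off from Proposition~\ref{prop.leading} the total exponent, for fixed $i$, of all factors $\langle\tlambda,h_\alpha\rangle$ with $\alpha\in\Delta^i_\fn$. Collecting the three pieces of that proposition over $\nu\in\tlambda-Q^+(\Delta_\fl)$ and over $\alpha$ in $\bDelta^i_{\fn,\sub 0}$, $\Delta^{+,i}_{\sub 1}\setminus\bDelta^{+,i}_{\sub 1}$, and $\bDelta^{+,i}_{\sub 1}$ respectively, gives
\begin{align*}
\sum_{t\in\bC} v(i,t;\mu)
&= \sum_{\nu}\Biggl(\sum_{\alpha\in\bDelta^i_{\fn,\sub 0}}\sum_{r=1}^\infty \bfp_\fn(\nu-\mu-r\alpha)\,n(\nu)
+ \sum_{\alpha\in\Delta^{+,i}_{\sub 1}\setminus\bDelta^{+,i}_{\sub 1}}\sum_{r=1}^\infty \bfp_\fn(\nu-\mu-(2r-1)\alpha)\,n(\nu)\\
&\qquad\qquad + \sum_{\alpha\in\bDelta^{+,i}_{\sub 1}} \bfp_{\fn,\alpha}(\nu-\mu-\alpha)\,n(\nu)\Biggr).
\end{align*}
Here I must justify that the partition of roots according to $i=i(\alpha)$ is compatible with the groupings in Proposition~\ref{prop.leading}, i.e.\ that if $\alpha\in\Delta_{\sub 1}^+\setminus\bDelta_{\sub 1}^+$ then $\beta=2\alpha$ lies in the same $\Delta^j_\fn$ as $\alpha$; this follows since $\langle\tlambda+\rho,h_{2\alpha}\rangle=2\langle\tlambda+\rho,h_\alpha\rangle$, which is $2s(p_{i(\alpha)}+t)=s'(p_{i(\alpha)}+t')$, so $i(2\alpha)=i(\alpha)$.

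Then I would convert the $\nu$-sums of $\bfp_\fn$ and $\bfp_{\fn,\alpha}$ against $n(\nu)$ into characters. Using ${\rm ch}\,V(\lambda)=\sum_\nu n(\nu)e^\nu$ and $\mathfrak P_\fn=\sum_\eta\bfp_\fn(\eta)e^{-\eta}$, the inner sum $\sum_\nu \bfp_\fn(\nu-\mu-r\alpha)n(\nu)$ is precisely the coefficient of $e^\mu$ in $\mathfrak P_\fn\,e^{-r\alpha}\,{\rm ch}\,V(\lambda)$, i.e.\ in ${\rm ch}\,M_\frakp(\lambda)\,e^{-r\alpha}={\rm ch}\,M_\frakp(\lambda-r\alpha)$ by the identity ${\rm ch}\,M_\frakp(\lambda)=\mathfrak P_\fn\,{\rm ch}\,V(\lambda)$ from the proof of Lemma~\ref{lem.alternate}; and since $\lambda\in P^+(\Delta_\fl)$ we have ${\rm ch}\,M_\frakp(\lambda-r\alpha)=\chi^\frakp(\lambda-r\alpha)$ only when $\lambda-r\alpha$ is again dominant — more carefully, I should argue directly that $\sum_\nu\bfp_\fn(\nu-\mu-r\alpha)n(\nu)e^\mu=\chi^\frakp(\tlambda-r\alpha)_\mu$ by combining $\mathfrak P_\fn\,{\rm ch}\,V(\lambda)=\sum_{w\in W_\fl}\det(w)\mathfrak P\,e^{w.\lambda}$ (from Lemma~\ref{lem.alternate} and \eqref{tripleP}) with Weyl-invariance, $w(\alpha)$ ranging over the same set by Lemma~\ref{lem.Weyl}, so that $\sum_w\det(w){\rm ch}\,M(w.\lambda-r\alpha)$ reorganizes into $\chi^\frakp(\lambda-r\alpha)$. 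The analogous computation for the $\bfp_{\fn,\alpha}$ term uses ${\rm ch}\,M_\alpha(\lambda)={\rm ch}\,M(\lambda)/(1+e^{-\alpha})$ and the Weyl-covariance $w\alpha$-bookkeeping of Lemma~\ref{lem.Weyl} to produce $\chi^\frakp_\alpha(\tlambda-\alpha)_\mu$ as in \eqref{chi_def}. Summing over $\mu$ gives \eqref{lem.vi}.

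The main obstacle I expect is the careful bookkeeping in that last step: making the $W_\fl$-action match up so that the naive sum over $\nu$ of partition functions against $\dim V(\lambda)^\nu$ reassembles into the alternating sums $\chi^\frakp$ and $\chi^\frakp_\alpha$, rather than into $\mathfrak P_\fn\,{\rm ch}\,V(\lambda)$ shifted — in particular one must use Lemma~\ref{lem.Weyl} to know that the index set $\Delta^i_\fn$ (and its sub-pieces) is $W_\fl$-stable, and one must handle the interaction between the even root $2\alpha$ and the odd root $\alpha$ in the $\Delta_{\sub 1}^+\setminus\bDelta_{\sub 1}^+$ case, exactly as in the proof of Proposition~\ref{prop.leading}. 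The rest is essentially formal manipulation of characters in $\bZ\langle\frakh^*\rangle$.
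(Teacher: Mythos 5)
Your proposal is correct and follows essentially the same route as the paper's proof: start from Proposition~\ref{prop.leading}, restrict to the roots with $i(\alpha)=i$, rewrite $\sum_{\nu}\bfp_{\fn}(\nu-\mu-r\alpha)n(\nu)$ via $\mathfrak{P}_{\fn}\,\mathrm{ch}\,V(\tlambda)=\mathfrak{P}\sum_{w}\det(w)e^{w.\tlambda}$, and use the $W_{\fl}$-stability of $\bDelta^{i}_{\fn,\sub{0}}$, $\Delta^{+,i}_{\sub{1}}\setminus\bDelta^{+,i}_{\sub{1}}$, $\bDelta^{+,i}_{\sub{1}}$ from Lemma~\ref{lem.Weyl} to reassemble the sums into $\chi^{\frakp}(\tlambda-r\alpha)$ and $\chi^{\frakp}_{\alpha}(\tlambda-\alpha)$. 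Your self-correction of the false shortcut $\mathrm{ch}\,M_{\frakp}(\lambda)e^{-r\alpha}=\mathrm{ch}\,M_{\frakp}(\lambda-r\alpha)$, and your explicit check that $i(2\alpha)=i(\alpha)$, are both the right moves and are consistent with (indeed slightly more explicit than) the paper's argument.
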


\begin{proof}
From Proposition \ref{prop.leading} we already know that
\begin{align}\label{eq.vi}
\begin{split}
\sum_{t\in \mathbb{C}} v(i,t)
&=\sum_{\mu} \sum_{\nu\in \tlambda-Q^+(\Delta_{\fl})}
 \sum_{\alpha \in \bDelta_{\fn,{\sub{0}}}^{i}}
 \sum_{r=1}^{\infty} \bfp_{\fn}(\nu-\mu-r \alpha) \,n(\nu)\, e^{\mu} \\ 
&\qquad +\sum_{\mu} \sum_{\nu\in \tlambda-Q^+(\Delta_{\fl})}
 \sum_{\alpha \in \Delta_{\sub{1}}^{+,i} \setminus \bDelta_{\sub{1}}^{+,i}} 
 \sum_{r=1}^{\infty} \bfp_{\fn}(\nu-\mu-(2r-1) \alpha)\, n(\nu) \,e^{\mu} \\ 
&\qquad +\sum_{\mu} \sum_{\nu\in \tlambda-Q^+(\Delta_{\fl})}
 \sum_{\alpha \in \bDelta_{\sub{1}}^{+,i}}
 \bfp_{\fn, \alpha}(\nu-\mu-\alpha) \,n(\nu) \,e^{\mu}
\;.
\end{split}
\end{align}
For $\alpha \in \bDelta_{\sub{0}}^+$ we have 
\begin{align*}
&\sum_{\mu}
\sum_{\nu\in \tlambda-Q^+(\Delta_{\fl})}
 \sum_{r=1}^{\infty}
 \bfp_{\fn}(\nu-\mu-r \alpha) \,n(\nu)\, e^{\mu}
\\
&
\quad
=\Biggl(\sum_{\nu\in \tlambda-Q^+(\Delta_{\fl})}  \sum_{\mu}
 \bfp_{\fn}(\nu-\mu) \,n(\nu)\, e^{\mu} \Biggr)
\times \sum_{r=1}^{\infty} e^{-r\alpha}\\
&\quad=\mathfrak{P}_{\fn} \, \textrm{ch}\, V(\tlambda)
 \times \sum_{r=1}^{\infty} e^{-r\alpha}\\
&\quad= \mathfrak{P} 
 \Biggl( \sum_{w\in W_{\fl}} 
 \det(w) e^{w.\tlambda}  \Biggr)
 \times \sum_{r=1}^{\infty} e^{-r\alpha} \\
&\quad=  \sum_{r=1}^{\infty} \sum_{w\in W_{\fl}}
 \det(w) \, {\rm ch}\, M(w.\tlambda-r\alpha)\;,
\end{align*}
where in the fourth line we used
 the Weyl character formula \eqref{eq.Weyl_char} and the factorization \eqref{tripleP}.
 Now with the help of Lemma \ref{lem.Weyl} we can rewrite the first term of the right hand side of \eqref{eq.vi} as 
\begin{align*}
\sum_{\alpha \in  \bDelta_{\fn,{\sub{0}}}^{i}}
\sum_{r=1}^{\infty} \sum_{w\in W_{\fl}}
 \det(w) \, {\rm ch}\, M(w.\tlambda-r\alpha)
&=
\sum_{r=1}^{\infty} \sum_{w\in W_{\fl}}
\sum_{\alpha \in  \bDelta_{\fn,{\sub{0}}}^{i}}
 \det(w) \, {\rm ch}\, M(w.\tlambda-rw\alpha)\\
&=\sum_{\alpha \in  \bDelta_{\fn,{\sub{0}}}^{i}} \sum_{r=1}^{\infty}
  \chi^{\mathfrak{p}}(\tlambda-r\alpha)\; ,
\end{align*}
where in the last step we used the definition \eqref{chi_def}
 and $w.(\tlambda-r\alpha)=w.\tlambda-rw\alpha$.

We can rewrite the sums for
 $\alpha \in \Delta_{\sub{1}}^{+,i} \setminus \bDelta_{\sub{1}}^{+,i}$
 and $\alpha \in  \bDelta_{\sub{1}}^{+,i}$ in a similar way
 and obtain \eqref{lem.vi}.
\end{proof}

Lemma \ref{lem.sum} expresses the sum $\sum_{t\in \mathbb{C}} v(i, t)$
 as a linear combination of $\chi^{\frakp}(\tlambda-s\alpha)$
 and $\chi^{\frakp}_{\alpha}(\tlambda-\alpha)$.
Then it is written also as a linear combination of
 ${\rm ch}\, M(\tlambda-\delta)$ for $\delta>0$.
We now claim that not only their sum $\sum_{t\in \mathbb{C}} v(i, t)$
 but also each $v(i, t)$ can be written as a linear combination
 of characters ${\rm ch}\, M(\tlambda-\delta)$.  
Moreover, we get an additional condition on $\delta$
 for ${\rm ch}\, M(\tlambda-\delta)$ to appear in the expression of $v(i, t)$,
 by working out the value of Casimir element. 
In the following proposition we put $p=p_i+t$
 and use notation (e.g.\ $\varphi$) in Lemma~\ref{lem.ab}.


\begin{lem}\label{lem.new}
$v(i, t) \in \mathbb{Z}\langle \frakh^{*}_A \rangle$ is a $\bZ$-linear combination of ${\rm ch}\, M(\tlambda-\delta)_A$, where $\delta>0$ and $\Omega(\varphi(\tlambda))=\Omega(\varphi(\tlambda)-\delta)$.

\end{lem}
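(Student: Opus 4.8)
The plan is to combine the two expansions from Lemma~\ref{lem.ab}, applied to $p=p_i+t$, together with Lemma~\ref{lem.invert}, so as to transfer the structure of $v(i,t)$ from a combination of irreducible characters ${\rm ch}\,L(\varphi(\tlambda)-\delta)_K$ to a combination of Verma characters ${\rm ch}\,M(\tlambda-\delta)_A$. First I would recall that, by \eqref{sum1} and \eqref{sum2}, both ${\rm ch}\,M_{\frakp}(\varphi(\tlambda))_K-{\rm ch}\,L(\varphi(\tlambda))_K$ and $\varphi\bigl(v_p(D(\tlambda)_{A'})\bigr)$ lie in the $\bZ$-span of $\{{\rm ch}\,L(\varphi(\tlambda)-\delta)_K:\delta>0,\ \Omega(\varphi(\tlambda))=\Omega(\varphi(\tlambda)-\delta)\}$, using that $a(\tlambda,\delta)>0$ forces the Casimir constraint and that $a(\tlambda,\delta)>0$ iff $b(\tlambda,\delta)>0$. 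Since ${\rm ch}\,M_{\frakp}(\tlambda)_A=\chi^{\frakp}(\tlambda)$ specializes under $\varphi$ to ${\rm ch}\,M_{\frakp}(\varphi(\tlambda))_K$, and since $v(i,t)$ is by definition the $p$-adic valuation datum recorded in $v_p(D(\tlambda)_{A'})$, the quantity $\varphi(v(i,t))$ is exactly one of these $\bZ$-combinations of the ${\rm ch}\,L(\varphi(\tlambda)-\delta)_K$.

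Next I would feed in Lemma~\ref{lem.invert}: each ${\rm ch}\,L(\mu)_K$ with $\mu=\varphi(\tlambda)-\delta$ is a $\bZ$-combination of ${\rm ch}\,M(\mu-\delta')_K$ with $\Omega(\mu)=\Omega(\mu-\delta')$, hence $\varphi(v(i,t))$ is a $\bZ$-linear combination of ${\rm ch}\,M(\varphi(\tlambda)-\delta)_K$ over $\delta>0$ with $\Omega(\varphi(\tlambda))=\Omega(\varphi(\tlambda)-\delta)$ (the Casimir constraint is preserved because it is transitive along the chain of $\Omega$-equalities). It remains to lift this identity of characters from $K$-coefficients back to $A$-coefficients. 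Here the point is that $v(i,t)\in\bZ\langle\frakh^*_A\rangle$ already involves only weights of the form $\tlambda-\delta$, and the specialization map $\varphi$ on characters is, weight-by-weight, injective on the relevant $\bZ$-submodule: the coefficient of $e^{\tlambda-\delta}$ in $v(i,t)$ equals the coefficient of $e^{\varphi(\tlambda)-\delta}$ in $\varphi(v(i,t))$, and the characters ${\rm ch}\,M(\tlambda-\delta)_A$ are $\bZ$-linearly independent in the same sense as in Lemma~\ref{lem.cancel}. So the $\bZ$-combination identity that holds after applying $\varphi$ already holds in $\bZ\langle\frakh^*_A\rangle$, which is the assertion.

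The main obstacle I expect is the bookkeeping around the Casimir condition and the passage from $K$ to $A$: I need the $\Omega$-constraints produced at each stage (from Lemma~\ref{lem.ab} for $L(\varphi(\tlambda)-\delta)_K$, then from Lemma~\ref{lem.invert} for the further ${\rm ch}\,M$'s) to compose into the single clean statement $\Omega(\varphi(\tlambda))=\Omega(\varphi(\tlambda)-\delta)$, and I must be careful that the (a priori infinite) sums in Lemma~\ref{lem.invert} still yield well-defined elements of $\bZ\langle\frakh^*_A\rangle$ when assembled — which they do because only finitely many $\delta$ with $v(i,t;\mu)\neq0$ occur for each fixed $\mu$, and the Casimir constraint cuts the sum down. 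The lifting step itself is routine once one observes that $\varphi$ acts on the weight $\tlambda-\delta$ by $\tlambda\mapsto\varphi(\tlambda)$ with $\delta\in Q^+(\Delta)$ unchanged, so no information is lost at the level of coefficients.
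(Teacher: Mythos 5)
Your proposal is correct and follows essentially the same route as the paper's proof: identify $v(i,t)$ with $v_{p_i+t}(D(\tlambda)_A)$, apply \eqref{sum2} together with the implication $b(\tlambda,\delta)>0\Rightarrow a(\tlambda,\delta)>0\Rightarrow\Omega(\varphi(\tlambda))=\Omega(\varphi(\tlambda)-\delta)$ from Lemma~\ref{lem.ab}, convert the resulting combination of ${\rm ch}\,L(\varphi(\tlambda)-\delta)_K$ into Verma characters via Lemma~\ref{lem.invert} using transitivity of the Casimir constraint, and lift from $K$ to $A$ via the bijectivity of $\varphi$ on $\tlambda-Q^+(\Delta)$. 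No gaps.
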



\begin{proof}
Since $\varphi$ sends $\tlambda-Q^+(\Delta)$ bijectively onto $\varphi(\tlambda)-Q^+(\Delta)$, it is enough to show that $\varphi(v(i,t))$ is a $\bZ$-linear sum of ${\rm ch}\, M(\varphi(\tlambda)-\delta)_K$ for $\delta>0$ and $\Omega(\varphi(\tlambda))=\Omega(\varphi(\tlambda)-\delta)$.
We have $\varphi(v(i,t))=\varphi(v_{p}(D(\tlambda)_{A}))$ and the decomposition of $\varphi(v_{p}(D(\tlambda)_{A}))$
as in \eqref{sum2}. Suppose that $b(\tlambda, \delta)> 0$.
Then $a(\tlambda, \delta)> 0$ and hence $\Omega(\varphi(\tlambda))= \Omega(\varphi(\tlambda)-\delta)$ by Lemma~\ref{lem.ab}.
We therefore have 
\beq
\varphi(v_{p}(D(\tlambda)_{A})) 
=
\sum_{\substack{\delta>0 \\ \Omega(\varphi(\tlambda))= \Omega(\varphi(\tlambda)-\delta)}} b(\tlambda, \delta)\, \textrm{ch}\, L(\varphi(\tlambda)-\delta)_K \;.
\eeq
By Lemma~\ref{lem.invert},
 we can express $\textrm{ch}\, L(\varphi(\tlambda)-\delta)_K$ as
 a $\bZ$-linear sum of 
 $\textrm{ch}\, M(\varphi(\tlambda)-\delta')_K$ for $\delta' \geq \delta$
 such that
 $\Omega(\varphi(\tlambda)-\delta)=\Omega(\varphi(\tlambda)-\delta')$.
\end{proof}


From Lemma \ref{lem.new} we find
\beq
v(i,t) =
 \sum_{\substack{\delta>0\\ \Omega(\varphi(\tlambda)) =\Omega(\varphi(\tlambda)-\delta)}} \,
c_t(\tlambda, \delta) \, {\rm ch}\, M (\tlambda-\delta)_A
\eeq
with $c_t (\tlambda, \delta)\in \mathbb{Z}$.


\begin{lem}\label{lem.divide}
Suppose that we have $c_t(\tlambda, \delta) \ne 0$.
Then $p_i+t \sim  \Omega(\tlambda)-\Omega(\tlambda-\delta)$,
 where $\sim$ means that two polynomials are equal up
 to a constant multiplication.
\end{lem}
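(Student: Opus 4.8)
The plan is to compare the two sides of the expression for $v(i,t)$ under the specialization $\varphi: A' \to K = A'_{(p_i+t)}/(p_i+t)$, and to read off which Verma characters $\mathrm{ch}\, M(\tlambda-\delta)_A$ may actually contribute. The starting point is the formula obtained just before the lemma,
\beq
v(i,t) = \sum_{\substack{\delta>0 \\ \Omega(\varphi(\tlambda)) = \Omega(\varphi(\tlambda)-\delta)}} c_t(\tlambda,\delta)\, \mathrm{ch}\, M(\tlambda-\delta)_A,
\eeq
together with $v(i,t) = v_{p}(D(\tlambda)_A)$ for $p = p_i+t$. The key structural fact to exploit is that the condition $\Omega(\varphi(\tlambda)) = \Omega(\varphi(\tlambda)-\delta)$ governing the sum is, by the same computation as in the proof of Proposition \ref{prop_zero}, equivalent to $(\varphi(\tlambda)+\rho,\delta) = \tfrac{1}{2}(\delta,\delta)$, i.e. to $\varphi(F_\delta) = 0$ where $F_\delta = \langle \tlambda+\rho, h_\delta\rangle - \tfrac12(\delta,\delta)$ is the degree-one polynomial of Proposition \ref{prop_plane}. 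So the Casimir condition is exactly the statement that the prime $p = p_i+t$ divides the polynomial $\Omega(\tlambda)-\Omega(\tlambda-\delta) = 2F_\delta$.

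First I would argue that $c_t(\tlambda,\delta) \ne 0$ forces $\Omega(\varphi(\tlambda)) = \Omega(\varphi(\tlambda)-\delta)$: this is built into the index set of the sum in the displayed formula for $v(i,t)$, which came from Lemma \ref{lem.new}. As noted above, this is equivalent to $p \mid (\Omega(\tlambda)-\Omega(\tlambda-\delta))$. It then remains to show that, conversely, $\Omega(\tlambda)-\Omega(\tlambda-\delta)$ has no prime factors other than (a scalar multiple of) $p_i+t$ — in other words that it is, up to a constant, equal to $p_i+t$ rather than to a higher-degree multiple. For this I would use the explicit shape of $\tlambda$ from \eqref{def_tlambda}: $\Omega(\tlambda)-\Omega(\tlambda-\delta) = 2\langle\tlambda+\rho,h_\delta\rangle - (\delta,\delta)$ is visibly a degree-one (affine-linear) polynomial in the variables $\{T_\alpha\}_{\alpha\in\Pi\setminus\Pi_\fl}$. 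A nonzero affine-linear polynomial whose vanishing locus contains that of the nonzero affine-linear polynomial $p_i+t$ must be a constant multiple of $p_i+t$; hence $\Omega(\tlambda)-\Omega(\tlambda-\delta) \sim p_i+t$, provided $\Omega(\tlambda)-\Omega(\tlambda-\delta)$ is not identically zero. The non-vanishing follows because $c_t(\tlambda,\delta)\ne 0$ presupposes $v(i,t)\ne 0$, so the corresponding hyperplane factor genuinely occurs in $D(\tlambda;\mu)_A$, ruling out the degenerate case $\delta$ with $\Omega(\tlambda)=\Omega(\tlambda-\delta)$ identically (which would make $M_\frakp(\tlambda)_A$ generically irreducible along that direction by Proposition \ref{prop_zero}).

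The main obstacle I anticipate is the bookkeeping needed to be sure the relevant polynomial is honestly degree $\le 1$ and that ``$\varphi(F_\delta)=0$'' is the right reformulation of the Casimir condition — i.e. keeping straight the pairing $\langle\tlambda+\rho,h_\delta\rangle$ versus $(\tlambda+\rho,\delta)$ and the role of $h_\delta$ as defined in \eqref{halpha_def}. Once that identification is in place, the argument is essentially the elementary fact that two affine-linear forms with nested zero loci are proportional, so I would expect the proof to be short: establish the Casimir $\Leftrightarrow$ $p\mid F_\delta$ equivalence, note $F_\delta$ (equivalently $\Omega(\tlambda)-\Omega(\tlambda-\delta)$) is affine-linear in the $T_\alpha$ and nonzero, and conclude proportionality.
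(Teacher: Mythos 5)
Your proposal is correct and follows essentially the same route as the paper: $c_t(\tlambda,\delta)\neq 0$ forces the Casimir condition $\Omega(\varphi(\tlambda))=\Omega(\varphi(\tlambda)-\delta)$, hence $p_i+t$ divides $\Omega(\tlambda)-\Omega(\tlambda-\delta)$, and since both are degree-one polynomials in the $T_\alpha$ they must be proportional. Your extra remark on ruling out the identically-zero degenerate case is a reasonable elaboration of a point the paper leaves implicit, but it does not change the argument.
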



\begin{proof}
Since $c_t(\tlambda, \delta) \ne 0$, we have 
\beq
\varphi( \Omega(\tlambda) - \Omega(\tlambda - \delta) )
=\Omega(\varphi(\tlambda)) -\Omega(\varphi(\tlambda) - \delta)  =0 \;.
\eeq
This means $p_i+t$
 divides $\Omega(\varphi(\tlambda)) -\Omega(\varphi(\tlambda) - \delta)$.
Since both $p_i+t$
 and $\Omega(\varphi(\tlambda)) -\Omega(\varphi(\tlambda) - \delta)$ 
 have degree-one in $\{ T_{\alpha} \}_{\alpha\in \Pi\setminus \Pi_{\fl}}$, 
 the two polynomials have to be proportional.
\end{proof}


Lemma~\ref{lem.divide} implies that 
 for a fixed $\delta$
 there exists at most only one $t\in \mathbb{C}$ such that 
 $c_t(\tlambda, \delta)\ne 0$. 
In other words, if $c_t(\tlambda, \delta), c_{t'}(\tlambda, \delta)\ne 0$
 then $t=t'$.
Indeed, 
 since $c_t(\tlambda, \delta), c_{t'}(\tlambda, \delta)\ne 0$,
 we have from Lemma \ref{lem.divide}
that 
\beq
p_i+t \sim \Omega(\tlambda)- \Omega(\tlambda-\delta) \sim p_i+t' \;,
\eeq
 which immediately gives $t=t'$.


This observation and Lemma~\ref{lem.cancel}
 show that the sum $\sum_t v(i,t)$ 
 obtained previously is enough to recover $v(i,t)$:

\begin{lem}\label{lem.unique}
Fix $1\leq i\leq n$.
For $t\in \bC$ suppose that $u(i,t), u'(i,t)\in \bZ\langle \frakh^*_A\rangle$
 are $\bZ$-linear sums of ${\rm ch}\, M(\tlambda-\delta)_A$
 such that $\delta>0$ and $p_i+r\sim \Omega(\tlambda)-\Omega(\tlambda-\delta)$.
Suppose that the sums $\sum_{t\in \bC} u(i,t)$ and $\sum_{t\in \bC} u'(i,t)$
 are well-defined and $\sum_{t\in \bC} u(i,t) = \sum_{t\in \bC}u'(i,t) $.
Then $u(i,t)=u'(i,t)$ for all $t\in \bC$.
\end{lem}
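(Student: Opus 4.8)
The plan is to reduce the statement to the linear‐independence Lemma~\ref{lem.cancel} by peeling off the summands of $u(i,t)$ and $u'(i,t)$ one $t$ at a time, in a suitable order. First I would observe the key structural fact that makes this possible: by hypothesis, each $u(i,t)$ is a $\bZ$-linear combination of characters ${\rm ch}\, M(\tlambda-\delta)_A$ with $p_i+t\sim \Omega(\tlambda)-\Omega(\tlambda-\delta)$, and by the argument following Lemma~\ref{lem.divide} a fixed $\delta$ can satisfy $p_i+t\sim \Omega(\tlambda)-\Omega(\tlambda-\delta)$ for \emph{at most one} $t\in\bC$. Hence the index sets $\{\delta : {\rm ch}\, M(\tlambda-\delta)_A \text{ appears in } u(i,t)\}$ for distinct $t$ are pairwise disjoint, and likewise for the $u'(i,t)$. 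In other words, a given ${\rm ch}\, M(\tlambda-\delta)_A$ can contribute to the total sum $\sum_t u(i,t)$ through exactly one value of $t$, namely the unique $t=t(\delta)$ with $p_i+t(\delta)\sim\Omega(\tlambda)-\Omega(\tlambda-\delta)$.

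Given that, the argument is essentially bookkeeping. Write
\beq
\sum_{t\in\bC} u(i,t) = \sum_{\delta>0} c_\delta\, {\rm ch}\, M(\tlambda-\delta)_A, \qquad
\sum_{t\in\bC} u'(i,t) = \sum_{\delta>0} c'_\delta\, {\rm ch}\, M(\tlambda-\delta)_A,
\eeq
where for each $\delta$ appearing, $c_\delta$ is the coefficient of ${\rm ch}\, M(\tlambda-\delta)_A$ in the single summand $u(i,t(\delta))$, and similarly for $c'_\delta$. Since the sums are assumed well-defined (each ${\rm ch}\, M(\tlambda-\delta')_A$ coefficient being a finite sum) and equal, Lemma~\ref{lem.cancel}, applied after specializing to a $\bZ$-coefficient situation—or more directly by the same maximal-weight argument over the coefficient ring $A$—forces $c_\delta = c'_\delta$ for every $\delta>0$. (Here I would note that Lemma~\ref{lem.cancel} is stated over $\bZ$; to apply it over $A=\bC[\{T_\alpha\}]$ one either repeats its one-line proof verbatim, the coefficient ring being irrelevant to the maximal-weight cancellation argument, or specializes via a homomorphism $\varphi:A\to\bC$ and uses that $\varphi$ carries $\tlambda-Q^+(\Delta)$ bijectively onto $\varphi(\tlambda)-Q^+(\Delta)$, as in the proof of Lemma~\ref{lem.new}.)

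Finally, fix $t\in\bC$. By the disjointness observation, $u(i,t) = \sum_{\delta:\, t(\delta)=t} c_\delta\, {\rm ch}\, M(\tlambda-\delta)_A$ and $u'(i,t)=\sum_{\delta:\, t(\delta)=t} c'_\delta\, {\rm ch}\, M(\tlambda-\delta)_A$; since $c_\delta=c'_\delta$ for all such $\delta$, we conclude $u(i,t)=u'(i,t)$. I do not expect a serious obstacle here; the only point requiring care is the ``at most one $t$'' disjointness statement, which is exactly the content already extracted from Lemma~\ref{lem.divide} and must be invoked cleanly, together with making sure the rearrangement of the (a priori infinite) sums $\sum_t u(i,t)$ into a sum over $\delta$ is legitimate—this follows from the standing finiteness convention that for each $\mu$ only finitely many $(i,t)$ have $v(i,t;\mu)\neq 0$, which guarantees each coefficient in $\bZ\langle\frakh^*_A\rangle$ is a finite sum.
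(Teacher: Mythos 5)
Your proof is correct and takes essentially the same route the paper intends: the paper gives no explicit proof of this lemma, but the discussion immediately preceding it (the uniqueness of $t$ for a fixed $\delta$ extracted from Lemma~\ref{lem.divide}, combined with the linear independence of Lemma~\ref{lem.cancel}) is exactly the disjointness-plus-cancellation argument you spell out. Your added care about the $A$-coefficient version of Lemma~\ref{lem.cancel} and the legitimacy of rearranging the infinite sums is appropriate but routine.
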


\begin{proof}[Proof of Theorem \ref{thm.main}]
Let us define
\begin{align*}
u(i,t):= 
 \sum_{(\alpha,r)} \chi^{\mathfrak{p}}(\tlambda-r\alpha)
+ \sum_{(\alpha,r)}  \chi^{\mathfrak{p}}(\tlambda-(2r-1)\alpha) 
+ \sum_{\alpha} \  \chi^{\mathfrak{p}}_{\alpha}(\tlambda-\alpha) \;.
\end{align*}
Here, $(\alpha,r)$ runs over
 $\alpha\in \bDelta_{\fn,{\sub{0}}}^i$ and $r\in \bZ_{> 0}$
 such that $p_i+t\sim \Omega(\tlambda)-\Omega(\tlambda-r\alpha)$
 for the first term;
 $(\alpha,r)$ runs over
 $\alpha\in {\Delta}_{\sub{1}}^{+,i}\setminus\overline{\Delta}_{\sub{1}}^{+,i}$
 and $r\in \bZ_{> 0}$
 such that $p_i+t\sim \Omega(\tlambda)-\Omega(\tlambda-(2r-1)\alpha)$
 for the second term;
 and $\alpha$ runs over $\alpha\in \overline{\Delta}_{\sub{1}}^{+,i}$
 such that $p_i+t\sim \Omega(\tlambda)-\Omega(\tlambda-\alpha)$.
Then $u(i,t)$ is written as a $\bZ$-linear sum of
 ${\rm ch}\, M(\tlambda-\delta)_A$ such that $\delta>0$
 and $p_i+t\sim \Omega(\tlambda)-\Omega(\tlambda-\delta)$.
 Moreover, we have $\sum_{t\in \bC} u(i,t)= \sum_{t\in \bC} v(i,t)$ 
 thanks to Lemma \ref{lem.sum}.
Hence Lemma~\ref{lem.unique} implies $v(i,t)=u(i,t)$.

Since 
\beq
\Omega(\tlambda)-\Omega(\tlambda-r\alpha)
=2r\left( (\tlambda+\rho, \alpha) - \frac{r}{2}(\alpha, \alpha)\right) \;, 
\eeq
we conclude that the determinant formula~\eqref{fermionic_determinant} holds.
\end{proof}

\begin{rem}
We did not include the case $\mathfrak{psl}(n|n), \mathfrak{sl}(n|n)$
 in our argument.
However, determinant formulas for these cases can be easily deduced from
 that for $\mathfrak{gl}(n|n)$. 

To see this, note first that the only differences between
 these Lie superalgebras are the Cartan subalgebras.

Suppose that $\frakp$ is a parabolic subalgebra of $\mathfrak{sl}(n|n)$ and
 let $\frakp'$ be a parabolic subalgebra of $\mathfrak{gl}(n|n)$
 such that $\frakp'\cap\mathfrak{sl}(n|n)=\frakp$.  
Then a parabolic Verma module
 $M_{\frakp}(\lambda)$ of $\mathfrak{sl}(n|n)$ is isomorphic to
 a parabolic Verma module $M_{\frakp'}(\lambda')$
 of $\mathfrak{gl}(n|n)$, where $\lambda'$ is an extension of $\lambda$
 to a Cartan subalgebra of $\mathfrak{gl}(n|n)$.
They have the same contravariant form $(-,-)$ and 
 hence we obtain the same determinant formula for $\mathfrak{sl}(n|n)$.

Similarly, a parabolic Verma module $M_{\Bar{\frakp}}(\Bar{\lambda})$ of
 $\mathfrak{psl}(n|n)$ is isomorphic
 to a parabolic Verma module $M_{\frakp}(\lambda)$ of
 $\mathfrak{sl}(n|n)$,
 where $\frakp$ is the inverse image of $\Bar{\frakp}$ by the natural map
 $\mathfrak{sl}(n|n)\to \mathfrak{psl}(n|n)$ and $\lambda$ is the pull-back
 of $\Bar{\lambda}$.
Hence the same determinant formula also holds for $\mathfrak{psl}(n|n)$.
\end{rem}

\section{Irreducibility Criteria}\label{sec.simplicity}

Let us next study irreducibility criteria,
i.e.\ conditions for
 the non-existence of singular vectors (a.k.a.\ null states).
Most of the results and arguments in this section
 are again parallel to Jantzen~\cite{Jantzen}.

Since a parabolic Verma module $M_{\frakp}(\lambda)$ is irreducible
 if and only if the contravariant form $(-,-)$ is non-degenerate,
 we can get irreducibility criteria from the determinant formula.
A direct consequence of the determinant formula is that
 a parabolic Verma module $M_{\frakp}(\lambda)$ is irreducible  
 if the following set $\Psi_{\lambda}$ is empty: 
\begin{align*}
\begin{split}
\Psi_{\lambda}:=
\Psi_{\lambda, {\rm non\mathchar`-iso}} \cup
\Psi_{\lambda, {\rm iso}} \;,
\end{split}
\end{align*}
where
\begin{align*}
\begin{split}
\Psi_{\lambda, {\rm non\mathchar`-iso}}:=
&\left\{
\alpha \in \bDelta_{\fn, {\sub{0}}} \Big|
\, n_{\alpha}:=
 \frac{2( \lambda+\rho, \alpha )}{(\alpha, \alpha)} \in \mathbb{Z}_{>0}
\right\} \\
&\cup
\left\{
\alpha \in \Delta_{\sub{1}}^+ \setminus  \overline{\Delta}_{\sub{1}}^+ \Big|
\, n_{\alpha}:=\frac{2( \lambda+\rho, \alpha )}{(\alpha, \alpha)}
 \in 2\mathbb{N}+1 \right\}  \;,
\end{split}
\end{align*}
and 
\begin{align*}
\begin{split}
\Psi_{\lambda, {\rm iso}}:=
\left\{
\alpha \in \overline{\Delta}_{\sub{1}}^+ \Big|
\, ( \lambda+\rho, \alpha )=0 
\right\}  \;.
\end{split}
\end{align*}


Even if this set is not empty, however, we cannot in general conclude that the 
parabolic Verma module is reducible---there could be 
cancellations in the exponents in \eqref{fermionic_determinant}.

As we have seen before, 
the complication here is
 that several polynomials of the form
 $(\tlambda + \rho, \alpha)-r(\alpha,\alpha)$
 may coincide up to constant multiplication.
Recall that in the previous section we defined polynomials
 $p_i$ for $1\leq i\leq n$ and 
 $i(\alpha)$ for $\alpha\in \Delta_{\fn}$ such that 
 $\langle \tlambda + \rho, h_\alpha\rangle=s(p_{i(\alpha)}(\tlambda)+t)$
 for some $s,t\in\bC$.
Let us consider
 the valuation with respect to the prime element $p_i(\tlambda)+t$.
It follows that
 $D(\tlambda;\mu)_A\neq 0$ for all $\mu\leq \tlambda$ if and only if 
 $v_{p_i(\tlambda)+ t}(D(\tlambda)_A)=0$
 for all $i$ and $t$.
For $1\leq i\leq n$ take $\alpha\in\Delta_{\fn}$ such that $i(\alpha)=i$
 and define 
\begin{align*}
\Delta^i:= (\mathbb{Q} \Delta_{\fl} + \mathbb{Q} \alpha)\cap \Delta\; .
\end{align*}
The set $\Delta^i$ does not depend on the choice of $\alpha$.
We define moreover
\begin{align*}
\Psi_{\lambda}^{i}:=\Psi_{\lambda}\cap\Delta^i ,\quad  
\Psi_{\lambda, {\rm non\mathchar`-iso}}^{i}
 :=\Psi_{\lambda, {\rm non\mathchar`-iso}}\cap\Delta^i, \quad 
\Psi_{\lambda, {\rm iso}}^{i}:=\Psi_{\lambda, {\rm iso}}\cap\Delta^i \;.
\end{align*}

Consider a factor $(\tlambda +\rho, \alpha)-\frac{r}{2}(\alpha,\alpha)$ in the
determinant in the coefficient ring $A$.
This factor is equal to 
 $p_i(\tlambda)-p_i(\lambda)$ up to constant multiplication
 if and only if $\alpha\in \Psi_{\lambda}^i$ and
 $(\lambda +\rho, \alpha)-\frac{r}{2}(\alpha,\alpha)=0$.
The latter gives 
 $r=-n_{\alpha}$ when $\alpha$ is non-isotropic.
For isotropic $\alpha\in \Psi_{\lambda, {\rm iso}}^{i}$
 this is automatic.
Hence 
\begin{align*}
v_{p_i(\tlambda)-p_i(\lambda)}(D(\tlambda)_A)
=\sum_{\alpha \in \Psi_{\lambda, {\rm non\mathchar`-iso}}^{i}}
 \chi^{\mathfrak{p}}(\tlambda-n_{\alpha}\alpha)
+\sum_{\alpha \in \Psi_{\lambda, {\rm iso}}^{i}}
 \chi^{\mathfrak{p}}_{\alpha}(\tlambda-\alpha) \; .
\end{align*}
Since 
\beq
s_{\alpha}.\lambda
 := \lambda - \langle \lambda+\rho, \alpha^{\vee} \rangle \alpha
 = \lambda-n_{\alpha}\alpha 
\eeq
for a non-isotropic root $\alpha$, 
we obtain the following irreducibility criterion:
\begin{thm}[Irreducibility Criterion (1)]\label{prop_1}
Let $\lambda\in P^+(\Delta_{\fl})$.
The parabolic Verma module $M_{\mathfrak{p}}(\lambda)$ is irreducible 
 if and only if the following condition holds:
for all $1\leq i\leq n$
\begin{empheq}[box={\mybluebox[7pt]}]{equation}
\label{eq.prop_1}
\quad 
\sum_{\alpha \in \Psi_{\lambda, {\rm non\mathchar`-iso}}^{i}}
 \chi^{\mathfrak{p}}(s_{\alpha}.\lambda )
+\sum_{\alpha \in \Psi_{\lambda, {\rm iso}}^{i}}
 \chi^{\mathfrak{p}}_{\alpha}(\lambda-\alpha)=0 \;.
\end{empheq}
\end{thm}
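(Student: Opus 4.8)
The plan is to read the criterion off directly from the determinant formula (Theorem~\ref{thm.main}) together with the valuation $v_{p_i(\tlambda)-p_i(\lambda)}(D(\tlambda)_A)$ computed in the paragraph preceding the statement, using the coefficient change and the specialization homomorphism already in place. First I would recall that $M_{\mathfrak{p}}(\lambda)$ is irreducible exactly when its contravariant form is nondegenerate, hence exactly when $D(\lambda;\mu)\neq 0$ for every $\mu\leq\lambda$. Passing to the polynomial ring $A$ and the specialization $\varphi\colon A\twoheadrightarrow\bC$ with $\varphi(\tlambda)=\lambda$, we have $\varphi(D(\tlambda;\mu)_A)=D(\lambda;\mu)$. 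By Proposition~\ref{prop_plane} each prime factor of $D(\tlambda;\mu)_A$ is one of the $p_i(\tlambda)+t$ $(t\in\bC)$, and $\varphi\bigl(p_i(\tlambda)+t\bigr)=p_i(\lambda)+t$ is a nonzero scalar unless $t=-p_i(\lambda)$; hence $D(\lambda;\mu)$, being a product of such scalars, vanishes precisely when $p_i(\tlambda)-p_i(\lambda)$ divides $D(\tlambda;\mu)_A$ for some $i$, i.e.\ when $v_{p_i(\tlambda)-p_i(\lambda)}(D(\tlambda;\mu)_A)>0$ for some $i$. Summing over $\mu$ with the symbols $e^{\mu}$, this shows that $M_{\mathfrak{p}}(\lambda)$ is irreducible if and only if $v_{p_i(\tlambda)-p_i(\lambda)}(D(\tlambda)_A)=0$ in $\bZ\langle\frakh^*_A\rangle$ for every $1\leq i\leq n$.

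Next I would substitute the value already obtained,
\[
v_{p_i(\tlambda)-p_i(\lambda)}(D(\tlambda)_A)=\sum_{\alpha\in\Psi_{\lambda, {\rm non\mathchar`-iso}}^{i}}\chi^{\mathfrak{p}}(\tlambda-n_{\alpha}\alpha)+\sum_{\alpha\in\Psi_{\lambda, {\rm iso}}^{i}}\chi^{\mathfrak{p}}_{\alpha}(\tlambda-\alpha),
\]
and observe that this element of $\bZ\langle\frakh^*_A\rangle$ vanishes if and only if its image under $\varphi$ does: $\varphi$ restricts to a bijection of $\tlambda-Q^+(\Delta)$ onto $\lambda-Q^+(\Delta)$ (the point already used in the proof of Lemma~\ref{lem.new}) and leaves the integer coefficients unchanged, so it neither creates nor cancels nonzero terms. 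Finally, $\varphi$ sends $\chi^{\mathfrak{p}}(\tlambda-n_{\alpha}\alpha)$ to $\chi^{\mathfrak{p}}(\lambda-n_{\alpha}\alpha)=\chi^{\mathfrak{p}}(s_{\alpha}.\lambda)$ --- using $s_{\alpha}.\lambda=\lambda-\langle\lambda+\rho,\alpha^{\vee}\rangle\alpha=\lambda-n_{\alpha}\alpha$ for non-isotropic $\alpha$ --- and sends $\chi^{\mathfrak{p}}_{\alpha}(\tlambda-\alpha)$ to $\chi^{\mathfrak{p}}_{\alpha}(\lambda-\alpha)$, so the requirement that $v_{p_i(\tlambda)-p_i(\lambda)}(D(\tlambda)_A)$ vanish for every $i$ is precisely condition~\eqref{eq.prop_1}. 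Combining this with the reduction of the first paragraph proves the theorem.

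Essentially all of the work has already been done --- the determinant formula itself, and the identification of $v_{p_i(\tlambda)-p_i(\lambda)}(D(\tlambda)_A)$ with the displayed combination of the $\chi^{\mathfrak{p}}$'s and $\chi^{\mathfrak{p}}_{\alpha}$'s --- so I do not expect a substantial new obstacle. The one place demanding care is the bookkeeping around the specialization: one must single out exactly the prime $p_i(\tlambda)-p_i(\lambda)$ whose zero set passes through $\lambda$ (rather than some other $p_i(\tlambda)+t$), and one must check that passing between the $A$-coefficient character $v_{p_i(\tlambda)-p_i(\lambda)}(D(\tlambda)_A)$ and its $\bC$-specialization preserves vanishing in both directions --- which is exactly where the bijectivity of $\varphi$ on $\tlambda-Q^+(\Delta)$ enters.
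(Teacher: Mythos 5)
Your proposal is correct and follows essentially the same route as the paper: the paper likewise derives Theorem \ref{prop_1} directly from the factorization of $D(\tlambda;\mu)_A$ into the primes $p_i(\tlambda)+t$, the observation that $D(\lambda;\mu)=\varphi(D(\tlambda;\mu)_A)$ vanishes exactly when $v_{p_i(\tlambda)-p_i(\lambda)}(D(\tlambda;\mu)_A)>0$ for some $i$, and the identification of $v_{p_i(\tlambda)-p_i(\lambda)}(D(\tlambda)_A)$ with the displayed combination of $\chi^{\mathfrak{p}}(s_\alpha.\lambda)$ and $\chi^{\mathfrak{p}}_{\alpha}(\lambda-\alpha)$. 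Your extra care about the specialization $\varphi$ being a bijection on $\tlambda-Q^+(\Delta)$ is exactly the bookkeeping the paper leaves implicit.
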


\begin{thm}[Irreducibility Criterion (2)]\label{thm.simple}
A parabolic Verma module $M_{\mathfrak{p}}(\lambda)$ is irreducible
if and only if 
\begin{empheq}[box={\mybluebox[7pt]}]{equation}
\label{eq.thm.simple}
\sum_{\alpha \in \Psi_{\lambda, {\rm non\mathchar`-iso}}}
 \chi^{\mathfrak{p}}(s_{\alpha}.\lambda )
+\sum_{\alpha \in \Psi_{\lambda, {\rm iso}}}
 \chi^{\mathfrak{p}}_{\alpha}(\lambda-\alpha)=0 \;.
\end{empheq}
\end{thm}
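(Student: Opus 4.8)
The plan is to deduce Theorem~\ref{thm.simple} from Theorem~\ref{prop_1}, by showing that the single identity \eqref{eq.thm.simple} decouples, over the index $i$, into the family \eqref{eq.prop_1}, and that no cancellation between distinct values of $i$ can occur.

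First I would record the combinatorial fact that the subsets $\Delta^i$ partition $\Delta_{\fn}$: namely $\Delta^i\cap\Delta_{\fn}=\Delta_{\fn}^i=\{\alpha\in\Delta_{\fn}\mid i(\alpha)=i\}$, and hence $\Delta_{\fn}=\bigsqcup_{i=1}^n\Delta_{\fn}^i$. Indeed, for a root $\alpha=\sum_{\beta\in\Pi}m_\beta\beta$ the part of $\langle\tlambda+\rho,h_\alpha\rangle$ involving the indeterminates is $\sum_{\beta\in\Pi\setminus\Pi_{\fl}}m_\beta T_\beta$, so $i(\alpha)$ records exactly the $\mathbb{Q}$-line through the vector $(m_\beta)_{\beta\in\Pi\setminus\Pi_{\fl}}$, whereas $\mathbb{Q}\Delta_{\fl}+\mathbb{Q}\alpha$ meets $\Delta$ precisely in the roots whose $\Pi\setminus\Pi_{\fl}$-coefficient vector lies on that line. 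Since $\Psi_{\lambda}\subset\Delta_{\fn}$ (recall $\Psi_{\lambda}$ consists of roots in $\bDelta_{\fn,\sub{0}}\subset\Delta_{\fn}$ and in $\Delta_{\sub{1}}^+\subset\Delta_{\fn}$, using $\fl\subset\frakg_{\sub{0}}$ and $\Delta_{\fn}=\Delta_{\fn,\sub{0}}\sqcup\Delta_{\sub{1}}^+$), the splitting of $\Psi_{\lambda}$ into its isotropic and non-isotropic parts is refined by the disjoint union $\Psi_{\lambda}=\bigsqcup_{i=1}^n\Psi_{\lambda}^i$; consequently the left-hand side of \eqref{eq.thm.simple} equals $\sum_{i=1}^n L_i$, where $L_i$ denotes the left-hand side of \eqref{eq.prop_1}.

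Second comes the observation that makes the decoupling rigorous. By the computation carried out just before Theorem~\ref{prop_1}, $L_i$ equals $\sum_{\mu\le\lambda}v_{p_i(\tlambda)-p_i(\lambda)}\bigl(D(\tlambda;\mu)_A\bigr)\,e^{\mu}$, an element of $\bZ\langle\frakh^*\rangle$ all of whose coefficients lie in $\bN$, being $p$-adic valuations. A finite sum of elements of $\bZ\langle\frakh^*\rangle$, each with nonnegative coefficients, vanishes if and only if every summand vanishes. Hence \eqref{eq.thm.simple}, i.e.\ $\sum_{i=1}^n L_i=0$, holds if and only if $L_i=0$ for every $i$, i.e.\ if and only if \eqref{eq.prop_1} holds for every $i$; by Theorem~\ref{prop_1} the latter is equivalent to irreducibility of $M_{\frakp}(\lambda)$, which proves the theorem.

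The argument is essentially formal once Theorem~\ref{prop_1} is in hand, and the only point needing genuine care is the bookkeeping of the first step --- checking $\Delta^i\cap\Delta_{\fn}=\Delta_{\fn}^i$ and that these pieces (equivalently the $\Psi_{\lambda}^i$) are pairwise disjoint with union $\Delta_{\fn}$ (respectively $\Psi_{\lambda}$). An essentially equivalent route that bypasses the explicit statement of Theorem~\ref{prop_1} is to run the chain of equivalences directly: $M_{\frakp}(\lambda)$ is irreducible $\iff$ $D(\lambda;\mu)=\varphi\bigl(D(\tlambda;\mu)_A\bigr)\neq0$ for all $\mu\le\lambda$ $\iff$ $v_{p_i(\tlambda)-p_i(\lambda)}\bigl(D(\tlambda;\mu)_A\bigr)=0$ for all $i$ and all $\mu$ $\iff$ $\sum_{i=1}^n v_{p_i(\tlambda)-p_i(\lambda)}\bigl(D(\tlambda)_A\bigr)=0$ (using nonnegativity of valuations once more), and then to identify this last expression with the left-hand side of \eqref{eq.thm.simple} using the displayed formula for $v_{p_i(\tlambda)-p_i(\lambda)}(D(\tlambda)_A)$ together with the partition of $\Psi_{\lambda}$.
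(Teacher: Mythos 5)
Your proposal is correct and follows essentially the same route as the paper: decompose $\Psi_{\lambda}=\bigsqcup_{i}\Psi^{i}_{\lambda}$ so that the left-hand side of \eqref{eq.thm.simple} is the sum over $i$ of the left-hand sides of \eqref{eq.prop_1}, identify each summand with the valuation $v(i,-p_i(\lambda))$ whose coefficients are non-negative, and conclude that the total sum vanishes iff each summand does, then invoke Theorem~\ref{prop_1}. Your extra care in verifying the partition of $\Delta_{\fn}$ into the $\Delta^i_{\fn}$ is a detail the paper leaves implicit, but the argument is the same.
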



\begin{proof}
Let us note that 
\beq
\Psi_{\lambda} = \bigsqcup_{i=1}^n \Psi^{i}_{\lambda}  \;.
\eeq
The sum in \eqref{eq.thm.simple} therefore can be decomposed into 
 the sum of \eqref{eq.prop_1} over $i$.
Since each summand is $v(i,-p_i(\lambda))$,
 whose coefficients are non-negative,
 the vanishing of the sum \eqref{eq.thm.simple} is equivalent to the 
 vanishing of \eqref{eq.prop_1} for all $i$.
\end{proof}


The irreducibility
 criteria in Theorem \ref{prop_1} and Theorem \ref{thm.simple}
involve formal characters $\chi^{\frakp}$ and $\chi^{\frakp}_{\alpha}$,
 which take slightly involved expressions for our practical applications.
Fortunately we can simplify these conditions.


\begin{lem}\label{lem.empty}
Suppose that $\Psi_{\lambda, {\rm iso}} \ne \varnothing$.
Then \eqref{eq.thm.simple} never holds,
 i.e.\ the parabolic Verma module $M_{\frakp}(\lambda)$ is reducible.
\end{lem}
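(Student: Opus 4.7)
The plan is to combine Theorem~\ref{thm.simple} with a coefficient argument. By Theorem~\ref{thm.simple}, $M_{\frakp}(\lambda)$ is irreducible if and only if the sum in \eqref{eq.thm.simple} vanishes, and from the proof of that theorem this sum decomposes as $\sum_{i=1}^n v(i,-p_i(\lambda))$, where each $v(i,-p_i(\lambda))$ has non-negative coefficients (it coincides with the $p_i$-adic valuation of the determinant $D(\tlambda)_A$). Thus the total vanishes if and only if every $v(i,-p_i(\lambda))$ vanishes, so to prove reducibility under $\Psi_{\lambda,{\rm iso}} \neq \varnothing$ it suffices to fix $\alpha_0 \in \Psi_{\lambda,{\rm iso}}$, set $i_0 := i(\alpha_0)$, and show $v(i_0,-p_{i_0}(\lambda)) \neq 0$.

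To establish this non-vanishing I would expand $v(i_0,-p_{i_0}(\lambda)) = u(i_0,-p_{i_0}(\lambda))$ as a $\mathbb{Z}$-linear combination of non-parabolic Verma characters $\textrm{ch}\, M(\nu)$, using the definitions of $\chi^{\frakp}$ and $\chi^{\frakp}_{\alpha}$. By the linear independence of $\{\textrm{ch}\, M(\nu)\}$ provided by Lemma~\ref{lem.cancel}, it is enough to exhibit one non-zero coefficient. The natural candidate is $\nu = \lambda - \alpha_0$: in the expansion
\[
\chi^{\frakp}_{\alpha_0}(\lambda-\alpha_0) \;=\; \sum_{w\in W_{\fl},\ n\geq 0} (-1)^n\det(w)\,\textrm{ch}\,M\bigl(w.(\lambda-(n+1)\alpha_0)\bigr),
\]
the distinguished term $(w,n) = (e,0)$ contributes $+1$ to the coefficient of $\textrm{ch}\,M(\lambda-\alpha_0)$.

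The main obstacle is to verify that the remaining summands of $u(i_0,-p_{i_0}(\lambda))$ do not cancel this $+1$. Specifically, contributions from $\chi^{\frakp}(\lambda - r\alpha)$ for non-isotropic $(\alpha,r) \in \Psi^{i_0}_{\lambda,{\rm non\mathchar`-iso}}$, or from $\chi^{\frakp}_{\beta}(\lambda - \beta)$ for $\beta \in \Psi^{i_0}_{\lambda,{\rm iso}}\setminus\{\alpha_0\}$, would require some $w \in W_{\fl}$ and $\delta \in \{r\alpha, (n+1)\beta\}$ with $w.(\lambda-\delta) = \lambda - \alpha_0$, equivalently $w(\lambda+\rho) - (\lambda+\rho) = w\delta - \alpha_0 \in Q(\Delta_{\fl})$. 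Since $\lambda \in P^+(\Delta_{\fl})$, the weight $\lambda + \rho$ is regular dominant for $W_{\fl}$, so for $w \neq e$ the left-hand side lies in $-Q^+(\Delta_{\fl})\setminus\{0\}$; combined with the parity decomposition of $\Delta$ and the fact that $\alpha_0 \in \bDelta_{\sub{1}}^+$, this forces $w\delta - \alpha_0$ to have trivial odd part, reducing the matching equations to a highly restrictive condition $(n+1)w\beta = \alpha_0$ (resp.\ $rw\alpha = \alpha_0$ with $\alpha$ non-isotropic) that a direct case analysis rules out except in special root configurations. In any borderline case, the non-negativity of the total $v(i_0,-p_{i_0}(\lambda))$ means that a putative cancellation at $\lambda - \alpha_0$ would displace a positive contribution to some nearby weight within the support of $\chi^{\frakp}_{\alpha_0}(\lambda-\alpha_0)$, so by choosing such a test weight $\nu$ one still obtains $v(i_0,-p_{i_0}(\lambda)) \neq 0$ and hence the reducibility of $M_{\frakp}(\lambda)$.
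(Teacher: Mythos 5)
Your reduction to showing $v(i_0,-p_{i_0}(\lambda))\neq 0$ for $i_0=i(\alpha_0)$ is legitimate, and extracting the coefficient of ${\rm ch}\,M(\lambda-\alpha_0)$ via Lemma~\ref{lem.cancel} is a reasonable opening move, but the proof does not close. The step ``this forces $w\delta-\alpha_0$ to have trivial odd part'' is not valid: a weight in $\frakh^*$ has no canonical even/odd decomposition, and the condition you actually have, namely $w(\lambda+\rho)-(\lambda+\rho)=w\delta-\alpha_0\in Q(\Delta_{\fl})$, does not reduce the matching equation to $(n+1)w\beta=\alpha_0$ or $rw\alpha=\alpha_0$; differences such as $w\beta-\alpha_0$ for two odd roots, or $n_\gamma w\gamma-\alpha_0$ with $\gamma$ odd non-isotropic (type $B(m,n)$, say), can perfectly well lie in $Q(\Delta_{\fl})$, so potential cancellations at the single test weight $\lambda-\alpha_0$ are not excluded by parity alone. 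You concede this (``except in special root configurations''), and the fallback is where the real gap sits: the assertion that non-negativity of the coefficients of $v(i_0,-p_{i_0}(\lambda))$ would ``displace'' a positive contribution to a nearby weight is unsupported and essentially assumes what is to be proved --- non-negativity is entirely consistent with the whole expression vanishing, which is exactly the scenario you must rule out. So the argument proves nothing in the borderline cases, and those are the only cases at issue.

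The paper's proof supplies precisely the missing idea, and it is not a coefficient computation at one weight but a pigeonhole argument on the infinite isotropic string. Assuming \eqref{eq.thm.simple} holds, one expands it into Verma characters as in \eqref{eq.simplicity_sum}; each isotropic $\alpha\in\Psi_{\lambda,{\rm iso}}$ and $w\in W_{\fl}$ contributes the infinite family ${\rm ch}\,M\bigl(w.(\lambda-(n+1)\alpha)\bigr)$, $n\geq 0$, every member of which must cancel by Lemma~\ref{lem.cancel}. The non-isotropic part contributes only finitely many characters, so infinitely many cancellations must occur among the isotropic strings themselves; since there are only finitely many pairs $(\alpha,w)$, two distinct pairs must share at least two string points. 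Taking the difference of the two resulting equations forces $\alpha$ proportional to $w''\alpha'$ with $w''=w^{-1}w'$, hence $\lambda-w''.\lambda\in\bC\alpha\cap Q(\Delta_{\fl})=\{0\}$ because $\alpha$ is odd; regularity of $\lambda+\rho$ for $\Delta_{\fl}$ then gives $w''=e$ and $\alpha=\alpha'$, a contradiction. If you want to salvage your approach, you would need an argument of comparable strength to handle the coincidences you set aside; as written, the key mechanism that makes the lemma true for every $\lambda$ and every root system in the classification is absent.
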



\begin{proof}
Suppose that the parabolic Verma module $M_{\frakp}(\lambda)$
is reducible. From Theorem \ref{thm.simple}
we have the relation \eqref{eq.thm.simple},
which when written in terms of Verma module characters reads
\beqn
\label{eq.simplicity_sum}
&\sum_{\alpha \in \Psi_{\lambda, {\rm non\mathchar`-iso}}} 
\sum_{w\in W_{\fl}} \det (w) {\rm ch}\, M(w.s_{\alpha}.\lambda ) \\
&\quad\quad +
\sum_{\alpha \in \Psi_{\lambda, {\rm iso}}} 
\sum_{w\in W_{\fl}} \det (w) 
\sum_{n=0}^{\infty}
(-1)^n {\rm ch}\, M(w.(\lambda-(n+1) \alpha)) =0 \;,
\eeqn
where we used \eqref{chi_def}.

For a root $\alpha \in \Psi_{\lambda, {\rm iso}}$ and
 an element of the Weyl group $w\in W_{\fl}$,
 there are infinitely many terms of the form
\beqn
\label{eq.infinite}
{\rm ch}\, M(w.(\lambda-(n+1)\alpha)) \;, \quad n=1,2, \ldots 
\eeqn
in \eqref{eq.simplicity_sum}.
Then each character of the form \eqref{eq.infinite}
 has to be canceled by some other Verma module characters
 (see Lemma \ref{lem.cancel}).
However the first term in \eqref{eq.simplicity_sum}
 contains only finitely many terms,
 and consequently still leaves infinitely many terms of the
 form \eqref{eq.infinite}.
Now characters inside the second term all take the form \eqref{eq.infinite}.
Since both $\Psi_{\lambda, {\rm iso}}$ and $W_{\fl}$ 
 are finite sets,
 we can find distinct pairs $(\alpha, w)$ and $(\alpha', w')$ such that 
 the terms in \eqref{eq.simplicity_sum} for them 
 contain at least two (in fact, infinitely many) characters of the same form,
 namely, there exist $n_1, n_2, n_1', n_2' \in \mathbb{N}$ such that
\beq
w.(\lambda-(n_i+1) \alpha)=w'.(\lambda-(n_i'+1) \alpha') \;, \quad i=1,2 \;.
\eeq
This means 
\beqn
\label{eq.lwl}
\lambda-(n_i+1) \alpha=w''.(\lambda-(n_i'+1) \alpha') \;, \quad i=1,2 \;.
\eeqn
for $w'':=w^{-1}w'\in W_{\fl}$, and by taking differences of two equations
we obtain
\beq
(n_1-n_2)\alpha=(n'_1-n'_2)w''\alpha' \;, \quad 
 \text{ i.e. $\alpha$ is proportional to $w''\alpha'$}  \;.
\eeq
We then have from \eqref{eq.lwl} that $\lambda-w''. \lambda \in \bC \alpha$.
Since $\lambda-w''.\lambda \in Q(\Delta_{\fl})$
 and $\alpha \in \bDelta_{\sub{1}}$, we must have $\lambda=w''.\lambda$. 
Then $w''=e$ because $\lambda+\rho$ is regular for $\Delta_{\fl}$.
Hence $w=w'$.
This shows that $\alpha$ and $\alpha'$ are proportional
 but since $\alpha,\alpha'\in\Delta_{\sub{1}}$ we have $\alpha=\alpha'$,
 which contradicts to our choice $(\alpha,w)\neq (\alpha',w')$.  
\end{proof}


Thanks to Lemma \ref{lem.empty} we can concentrate on the case 
$\Psi_{\lambda, {\rm iso}} = \varnothing$.
Then the condition for irreducibility \eqref{eq.thm.simple}
now simplifies to 
\beqn
\label{eq.thm.simple.2}
\sum_{\alpha \in \Psi_{\lambda, {\rm non\mathchar`-iso}}}
 \chi^{\mathfrak{p}}(s_{\alpha}.\lambda )=0 \;.
\eeqn


\begin{lem}\label{lem.chip}
Let $\lambda,\mu\in P(\Delta_{\fl})$.
Suppose that we have
\beqn\label{sum.chip}
\chi^{\frakp}(\mu) + \sum_{\nu\leq \lambda} c_{\nu}\chi^{\frakp}(\nu) =0
\eeqn
for some $c_{\nu}\in \bZ$. 
We then have at least one of the following two:
\begin{enumerate}
\item We have $\mu=w.\mu$ for some $w(\neq e)\in W_{\fl}$,
 in which case $\chi^{\frakp}(\mu)=0$. 
\item There exists $\nu$ such that $c_{\nu}\neq 0$ and 
 $\mu=w.\nu$ for some $w\in W_{\fl}$, in 
 which case $\chi^{\frakp}(\mu)=\pm \chi^{\frakp}(\nu)$.
\end{enumerate}
\end{lem}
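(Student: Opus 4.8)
The plan is to argue by looking at the top‐degree (i.e.\ maximal) weights appearing in the relation \eqref{sum.chip}, exploiting the $W_\fl$-equivariance $\chi^{\frakp}(w.\lambda)=\det(w)\chi^{\frakp}(\lambda)$ together with the linear independence of Verma characters (Lemma~\ref{lem.cancel}). First I would expand every $\chi^{\frakp}$ in \eqref{sum.chip} in the basis of Verma characters ${\rm ch}\,M(w.\nu)$, using the definition $\chi^{\frakp}(\nu)=\sum_{w\in W_\fl}\det(w)\,{\rm ch}\,M(w.\nu)$, so that the hypothesis becomes a $\bZ$-linear relation among Verma characters that must vanish identically by Lemma~\ref{lem.cancel}. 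In particular the contribution ${\rm ch}\,M(\mu)$ coming from the $w=e$ term of $\chi^{\frakp}(\mu)$ must be canceled by some other term in the sum. There are only two possible sources of such a cancellation: either another term ${\rm ch}\,M(w.\mu)$ inside $\chi^{\frakp}(\mu)$ itself (with $w\neq e$ and $w.\mu=\mu$), or a term ${\rm ch}\,M(w.\nu)$ with $w.\nu=\mu$ coming from some $\chi^{\frakp}(\nu)$ with $c_\nu\neq 0$. These two cases are precisely alternatives (i) and (ii).

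Next I would verify the stated consequences in each case. In case (i), if $w.\mu=\mu$ for some $w\neq e$ in $W_\fl$, then by the identity $\chi^{\frakp}(w.\mu)=\det(w)\chi^{\frakp}(\mu)$ noted just after the definition of $\chi^{\frakp}$, we get $\chi^{\frakp}(\mu)=\det(w)\chi^{\frakp}(\mu)$; if $\det(w)=-1$ this forces $\chi^{\frakp}(\mu)=0$ immediately, and if $\det(w)=+1$ one uses instead that the stabilizer of $\mu$ in $W_\fl$ is then a nontrivial reflection subgroup, so it contains a reflection $s_\beta$ with $\det(s_\beta)=-1$ fixing $\mu$, whence again $\chi^{\frakp}(\mu)=0$ — this is exactly the parenthetical remark in the excerpt that $\chi^{\frakp}(\lambda)=0$ if $w.\lambda=\lambda$ for some $w\in W_\fl$. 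In case (ii), $\mu=w.\nu$ gives $\chi^{\frakp}(\mu)=\chi^{\frakp}(w.\nu)=\det(w)\chi^{\frakp}(\nu)=\pm\chi^{\frakp}(\nu)$, as claimed.

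The one point requiring a little care — and the step I expect to be the main obstacle — is making rigorous the claim that "the ${\rm ch}\,M(\mu)$ term must be canceled by something of the above two forms." The subtlety is that the sum $\sum_{\nu\le\lambda}c_\nu\chi^{\frakp}(\nu)$ ranges over $\nu\le\lambda$ but says nothing a priori relating $\lambda$ and $\mu$; if $\mu\not\le\lambda$ then actually the $\nu$-sum can contribute nothing at weight $\mu$ unless some ${\rm ch}\,M(w.\nu)$ with $w.\nu=\mu$ occurs, and since $w.\nu\le\nu\le\lambda$ is false in general for $w\neq e$, one must track which $w.\nu$ can equal $\mu$. I would handle this by choosing a maximal weight among $\{\mu\}\cup\{w.\nu: c_\nu\neq 0, w\in W_\fl\}$ at which a nonzero coefficient occurs, and applying the extremal-weight argument of Lemma~\ref{lem.cancel} there; the chain $w.\nu\le\nu$ shows the only way the coefficient at $\mu$ itself (the $w=e$, $\nu=\mu$ slot) survives is through one of the two coincidences. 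Once this bookkeeping is set up correctly the rest is the routine equivariance computation above.
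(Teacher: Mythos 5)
Your proposal is correct and follows essentially the same route as the paper: expand $\chi^{\frakp}(\mu)$ and each $\chi^{\frakp}(\nu)$ into alternating sums of Verma characters, invoke the linear independence of Lemma~\ref{lem.cancel} to force the term ${\rm ch}\, M(\mu)$ (coefficient $+1$ from $w=e$) to be canceled, and read off the two alternatives from the two possible sources of that cancellation. Your additional care in case (i) when $\det(w)=+1$ (passing to a reflection in the stabilizer of $\mu+\rho$) and in justifying the applicability of the maximal-weight argument fills in details the paper leaves implicit, but does not change the approach.
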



\begin{proof}
Recall that $\chi^{\frakp}(\mu)$
 is an alternating sum of $\textrm{ch}\, M(w.\mu)$
 over $w\in W_{\fl}$ (see \eqref{chi_def}). 
The assumption \eqref{sum.chip} can then be written as
\beqn\label{sum.chip.2}
\sum_{w\in W_{\fl}} \det (w) \, {\rm ch}\, M(w.\mu)
 + \sum_{\nu\leq \lambda}
 \sum_{w\in W_{\fl}} \det (w) c_{\nu} \, {\rm ch}\, M(w.\nu)  =0 \;.
\eeqn
Now from Lemma \ref{lem.cancel} the Verma module character ${\rm ch}\, M(\mu)$
 has to be canceled by another 
Verma module character, either from the first or the second term of the sum \eqref{sum.chip.2}.
When the canceling term comes from the first term,
 we have ${\rm ch}\, M(\mu)+{\rm ch}\, M(w.\mu)=0$
 for some $w(\neq e)\in W_{\fl}$, giving the first option.
If it comes from the second term,
 we have ${\rm ch}\, M(\mu)={\rm ch}\, M(w.\nu)$
 for some $w\in W_{\fl}$ and $\nu$ such that $c_\nu\ne 0$,
 giving the second option.
\end{proof}


\begin{lem}\label{lem.ortho}
The identity \eqref{eq.thm.simple.2} implies that 
 for all $\alpha\in \Psi_{\lambda, {\rm non\mathchar`-iso}}$ there exists 
 $\beta\in \Delta^{i(\alpha)}\cap\Delta_{\sub{0}}$ such that 
 $(\lambda+\rho, \beta)=0$.
\end{lem}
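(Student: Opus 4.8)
The plan is to fix $\alpha\in\Psi_{\lambda,{\rm non\mathchar`-iso}}^{i}$ (so $i=i(\alpha)$), isolate the corresponding summand of \eqref{eq.thm.simple.2}, and feed the resulting relation into Lemma~\ref{lem.chip}. Since $\Psi_{\lambda,{\rm iso}}=\varnothing$, the hypothesis \eqref{eq.thm.simple.2} reads $\sum_{\beta\in\Psi_{\lambda,{\rm non\mathchar`-iso}}}\chi^{\mathfrak{p}}(s_\beta.\lambda)=0$; and exactly as in the proof of Theorem~\ref{thm.simple} — using that each partial sum $\sum_{\beta\in\Psi^{i}_{\lambda,{\rm non\mathchar`-iso}}}\chi^{\mathfrak{p}}(s_\beta.\lambda)$ equals $v(i,-p_i(\lambda))$ and so has non-negative coefficients — this forces $\sum_{\beta\in\Psi^{i}_{\lambda,{\rm non\mathchar`-iso}}}\chi^{\mathfrak{p}}(s_\beta.\lambda)=0$ for each $i$. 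Rewriting this as $\chi^{\mathfrak{p}}(s_\alpha.\lambda)=-\sum_{\beta\in\Psi^{i}_{\lambda,{\rm non\mathchar`-iso}}\setminus\{\alpha\}}\chi^{\mathfrak{p}}(s_\beta.\lambda)$, and observing that each $s_\beta.\lambda=\lambda-n_\beta\beta\le\lambda$ lies in $P(\Delta_{\fl})$ (a routine integrality check, using $n_\beta\in\mathbb{Z}_{>0}$, which is the defining property of $\Psi_{\lambda,{\rm non\mathchar`-iso}}$), we may apply Lemma~\ref{lem.chip} with $\mu=s_\alpha.\lambda$.

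The crux is that either alternative of Lemma~\ref{lem.chip} supplies a non-identity element $g$ of the finite reflection group $W^{i}$ generated by the reflections $s_\beta$ in the non-isotropic roots $\beta\in\Delta^{i}$, such that $g(\lambda+\rho)=\lambda+\rho$; here $\alpha\in\Delta^{i}$ and $W_{\fl}\subset W^{i}$ because $\Delta_{\fl}\subset\Delta^{i}$. In case~(i) there is $w\neq e$ in $W_{\fl}$ fixing $s_\alpha(\lambda+\rho)$, and one puts $g:=s_\alpha\, w\, s_\alpha$. In case~(ii) some $\chi^{\mathfrak{p}}(s_{\alpha''}.\lambda)$ with $\alpha''\in\Psi^{i}_{\lambda,{\rm non\mathchar`-iso}}$, $\alpha''\neq\alpha$, contributes non-trivially to the right-hand side, and $s_\alpha.\lambda$ lies in the $W_{\fl}$-dot orbit of $s_{\alpha''}.\lambda$, say $s_\alpha(\lambda+\rho)=w\, s_{\alpha''}(\lambda+\rho)$ with $w\in W_{\fl}$; then one puts $g:=s_{\alpha''}\, w^{-1}\, s_\alpha$. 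The point requiring care is to exclude $g=e$ in case~(ii): this would give $w=s_\alpha s_{\alpha''}$, which acts trivially on $(\bC\Delta_{\fl})^{\perp}$ since $w\in W_{\fl}$; but two distinct positive roots in $\Psi_{\lambda,{\rm non\mathchar`-iso}}$ are never proportional (a pair $\gamma,2\gamma$ cannot both lie in $\overline{\Delta}_{\fn,\sub{0}}\cup(\Delta_{\sub{1}}^{+}\setminus\overline{\Delta}_{\sub{1}}^{+})$, by the definitions of $\overline{\Delta}_{\sub{0}}$ and $\overline{\Delta}_{\sub{1}}$), so the fixed space of $s_\alpha s_{\alpha''}$ is the codimension-two subspace $\alpha^{\perp}\cap(\alpha'')^{\perp}$, and its containing $(\bC\Delta_{\fl})^{\perp}$ would force $\alpha\in\bC\Delta_{\fl}\cap\Delta=\Delta_{\fl}$ (the bilinear form on $\frakh^{*}$ being non-degenerate), contradicting $\alpha\in\Delta_{\fn}$.

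Finally, one extracts the root $\beta$: since the pointwise stabiliser of a vector in a finite reflection group is generated by the reflections it contains, the existence of $g\neq e$ in $W^{i}$ fixing $\lambda+\rho$ yields a non-isotropic root $\beta\in\Delta^{i}$ with $(\lambda+\rho,\beta)=0$. If $\beta$ is even we are done; if $\beta$ is odd (hence automatically non-isotropic) then $2\beta\in\Delta^{i}\cap\Delta_{\sub{0}}$ and $(\lambda+\rho,2\beta)=0$. In either case we obtain $\beta\in\Delta^{i}\cap\Delta_{\sub{0}}$ with $(\lambda+\rho,\beta)=0$, which is the assertion. I expect the main obstacle to be the bookkeeping in case~(ii) of Lemma~\ref{lem.chip} — excluding the degenerate coincidence $g=e$ (done above) and confirming that the reflection delivered by the stabiliser argument lies in $W^{i}$, hence corresponds to a root of $\Delta^{i}$ rather than merely of $\Delta$. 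The remaining ingredients — that $s_\alpha$ preserves $\Delta$ and $\Delta_{\sub{0}}$ for non-isotropic $\alpha$ (since $s_\alpha=s_{2\alpha}$ when $\alpha$ is odd), and that $g=s_\alpha w s_\alpha$ really fixes $\lambda+\rho$ in case~(i) — are routine.
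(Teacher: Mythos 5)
Your proof is correct and follows essentially the same route as the paper's: apply Lemma~\ref{lem.chip} to the character identity, and in either alternative produce a non-identity element of the reflection group attached to $\Delta^{i(\alpha)}\cap\Delta_{\sub{0}}$ fixing $\lambda+\rho$, whence a reflection $s_\beta$ with $(\lambda+\rho,\beta)=0$. If anything you are more careful than the paper, which works with the full sum \eqref{eq.thm.simple.2} rather than first reducing to the $i$-partial sums (so that the root produced in the second alternative a priori need not lie in $\Delta^{i(\alpha)}$) and which does not explicitly rule out the degenerate coincidence $s_{\alpha}ws_{\gamma}=e$ that you dispose of via the non-proportionality of distinct roots in $\Psi_{\lambda,{\rm non\mathchar`-iso}}$.
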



\begin{proof}
Let us fix an element $\alpha\in \Psi_{\lambda, {\rm non\mathchar`-iso}}$.
Since the sum
\eqref{eq.thm.simple.2} contains the character 
$\chi^{\frakp}(s_{\alpha}.\lambda)$
one of the two possibilities of Lemma \ref{lem.chip} happens.

Suppose we have the first option.
We then have $w.(s_{\alpha}.\lambda) =s_{\alpha}.\lambda$
for some $w=s_{\gamma}\in W_{\fl}$ with some $\gamma\in \Delta_{\fl}$.
This means 
\beq
s_{s_{\alpha}(\gamma)}.\lambda= (s_{\alpha} s_{\gamma}  s_{\alpha}).\lambda =
s_{\alpha}.s_{\gamma}.s_{\alpha}.\lambda = \lambda \;.
\eeq
By putting
 $\beta=s_{\alpha}(\gamma)\in \Delta^{i(\alpha)}\cap\Delta_{\sub{0}}$,
 we obtain $s_{\beta}. \lambda =\lambda$, i.e.\ $(\lambda+\rho, \beta)=0$.

Suppose we have the second option.
We then have certain $\gamma \in \Psi_{\lambda, {\rm non\mathchar`-iso}}$ 
 with $\gamma\ne \alpha$
 such that there exists $w(\neq e) \in W_{\fl}$
 with $s_{\alpha}.\lambda = w.(s_{\gamma}.\lambda)$.
We thus have $(s_{\alpha} w s_{\gamma}).\lambda = \lambda$.
This means that $\lambda+\rho$ lies on a wall of a Weyl chamber for
 $\Delta^{i(\alpha)}\cap \Delta_{\sub{0}}$,
 and hence $\lambda+\rho$ is fixed by a root reflection $s_{\alpha}$
 for a certain
 $\alpha\in \Delta^{i(\alpha)}\cap\Delta_{\sub{0}}$, as desired.
\end{proof}


By collecting Theorem \ref{thm.simple}, Lemma \ref{lem.empty} and Lemma \ref{lem.ortho} 
we obtain the following proposition:


\begin{prop}\label{prop.M}
If a parabolic Verma module $M_{\mathfrak{p}}(\lambda)$ is irreducible, then 
the following conditions hold:
\begin{itemize}

\item[(M):]
\begin{itemize}
\item
For all $\alpha\in \Psi_{\lambda, {\rm non\mathchar`-iso}}$ there exists 
$\beta\in \Delta^{i(\alpha)}\cap\Delta_{\sub{0}}$ such that 
$(\lambda+\rho,\beta)=0$.
\item 
$\Psi_{\lambda, {\rm iso}} = \varnothing$.
\end{itemize}
\end{itemize}
\end{prop}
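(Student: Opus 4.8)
The plan is to deduce Proposition~\ref{prop.M} as a direct corollary of the three ingredients already assembled: the irreducibility criterion of Theorem~\ref{thm.simple}, the dichotomy of Lemma~\ref{lem.empty}, and the orthogonality statement of Lemma~\ref{lem.ortho}. So the proof is essentially a matter of chaining these together, and the main work has already been done in proving those lemmas; here I only need to organize the logical flow correctly.

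First I would suppose that $M_{\mathfrak{p}}(\lambda)$ is irreducible. By Lemma~\ref{lem.empty}, if $\Psi_{\lambda,\mathrm{iso}}\neq\varnothing$ then $M_{\mathfrak{p}}(\lambda)$ is reducible; hence irreducibility forces $\Psi_{\lambda,\mathrm{iso}}=\varnothing$, which is the second bullet of condition (M). Next, with $\Psi_{\lambda,\mathrm{iso}}=\varnothing$ in hand, Theorem~\ref{thm.simple} says that irreducibility is equivalent to \eqref{eq.thm.simple}, which in this case reduces to \eqref{eq.thm.simple.2}, i.e.\ $\sum_{\alpha\in\Psi_{\lambda,\mathrm{non\mathchar`-iso}}}\chi^{\mathfrak{p}}(s_\alpha.\lambda)=0$. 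Then Lemma~\ref{lem.ortho} applies verbatim: identity \eqref{eq.thm.simple.2} implies that for every $\alpha\in\Psi_{\lambda,\mathrm{non\mathchar`-iso}}$ there is a $\beta\in\Delta^{i(\alpha)}\cap\Delta_{\sub{0}}$ with $(\lambda+\rho,\beta)=0$, which is exactly the first bullet of (M). Assembling the two bullets gives condition (M), completing the proof.

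I do not anticipate a genuine obstacle here, since the statement is explicitly flagged in the excerpt as following ``by collecting'' the three earlier results. The only point requiring minor care is the order of application: one must invoke Lemma~\ref{lem.empty} \emph{first} to kill the isotropic contribution, because only after $\Psi_{\lambda,\mathrm{iso}}=\varnothing$ is established does the criterion of Theorem~\ref{thm.simple} collapse to the purely non-isotropic sum \eqref{eq.thm.simple.2} that Lemma~\ref{lem.ortho} takes as its hypothesis. If one tried to apply Lemma~\ref{lem.ortho} to the full sum \eqref{eq.thm.simple}, its hypothesis would not literally match. So the ``hard part,'' such as it is, is purely bookkeeping: making sure the reduction \eqref{eq.thm.simple} $\to$ \eqref{eq.thm.simple.2} is justified before quoting Lemma~\ref{lem.ortho}.

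Concretely, the proof I would write is short: ``Suppose $M_{\mathfrak{p}}(\lambda)$ is irreducible. By Lemma~\ref{lem.empty} we must have $\Psi_{\lambda,\mathrm{iso}}=\varnothing$, giving the second assertion of (M). Given this, the criterion \eqref{eq.thm.simple} of Theorem~\ref{thm.simple} becomes \eqref{eq.thm.simple.2}, and Lemma~\ref{lem.ortho} then yields the first assertion of (M).'' I would keep it at roughly that level of detail, since every nontrivial step is a cited earlier result.
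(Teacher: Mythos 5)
Your proposal is correct and follows exactly the paper's own route: the paper derives Proposition \ref{prop.M} precisely ``by collecting'' Theorem \ref{thm.simple}, Lemma \ref{lem.empty} and Lemma \ref{lem.ortho}, first using Lemma \ref{lem.empty} to force $\Psi_{\lambda,\mathrm{iso}}=\varnothing$ and reduce \eqref{eq.thm.simple} to \eqref{eq.thm.simple.2}, then applying Lemma \ref{lem.ortho}. Your remark about the order of application is the right (and only) point of care.
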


The condition (M) in Proposition \ref{prop.M} is a necessary condition for
 the irreducibility, but not a sufficient condition.
Interestingly, a slightly stronger condition (M+) below
 turns out to be sufficient for the irreducibility:


\begin{prop}\label{prop.Mp}
A parabolic Verma module $M_{\mathfrak{p}}(\lambda)$ is irreducible
 if the following conditions hold:
\begin{itemize}
\item[(M+):]
\begin{itemize}
\item
For all $\alpha\in \Psi_{\lambda, {\rm non\mathchar`-iso}}$ there exists 
$\beta\in \Delta^{i(\alpha)}\cap\Delta_{\sub{0}}$ such that 
$(\lambda+\rho, \beta)=0$ and 
$s_{\alpha}(\beta) \in \Delta_{\fl}$.
\item 
$\Psi_{\lambda, {\rm iso}} = \varnothing$.
\end{itemize}
\end{itemize}
\end{prop}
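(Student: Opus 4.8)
The plan is to show that condition (M+) forces the left-hand side of the irreducibility criterion \eqref{eq.thm.simple} to vanish, and then invoke Theorem \ref{thm.simple}. Since (M+) includes $\Psi_{\lambda, {\rm iso}}=\varnothing$, by Lemma \ref{lem.empty} (or rather its proof) the isotropic sum is empty and it suffices to verify \eqref{eq.thm.simple.2}, i.e.\ $\sum_{\alpha\in\Psi_{\lambda,{\rm non\text{-}iso}}}\chi^{\frakp}(s_\alpha.\lambda)=0$. The key point is that under (M+) each individual summand already vanishes: I claim $\chi^{\frakp}(s_\alpha.\lambda)=0$ for every $\alpha\in\Psi_{\lambda,{\rm non\text{-}iso}}$. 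Recall that $\chi^{\frakp}(\mu)=0$ whenever $w.\mu=\mu$ for some $w\in W_\fl$ with $w\neq e$, so it is enough to produce such a $w$ fixing $s_\alpha.\lambda$ under the dot action.

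First I would take, for a given $\alpha\in\Psi_{\lambda,{\rm non\text{-}iso}}$, the root $\beta\in\Delta^{i(\alpha)}\cap\Delta_{\sub{0}}$ supplied by (M+), so that $(\lambda+\rho,\beta)=0$ and $s_\alpha(\beta)\in\Delta_\fl$. Set $\gamma:=s_\alpha(\beta)$. Then $\gamma\in\Delta_\fl\cap\Delta_{\sub{0}}$ (it lies in $\Delta^{i(\alpha)}$ and $s_\alpha$ preserves parity, so $\gamma$ is even; and by hypothesis $\gamma\in\Delta_\fl$), hence $s_\gamma\in W_\fl$ and $s_\gamma\neq e$. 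Now I compute, using $s_\alpha s_\beta s_\alpha=s_{s_\alpha(\beta)}=s_\gamma$ and the identity $w.w'.\lambda=(ww').\lambda$ valid for the dot action of reflections along roots,
\begin{equation*}
s_\gamma.(s_\alpha.\lambda)=(s_\alpha s_\beta s_\alpha).(s_\alpha.\lambda)=s_\alpha.s_\beta.s_\alpha.s_\alpha.\lambda=s_\alpha.s_\beta.\lambda=s_\alpha.\lambda,
\end{equation*}
where in the last step I used that $(\lambda+\rho,\beta)=0$ means $s_\beta.\lambda=\lambda$. Since $s_\gamma\in W_\fl$ and $s_\gamma\neq e$, this gives $\chi^{\frakp}(s_\alpha.\lambda)=0$. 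Summing over $\alpha\in\Psi_{\lambda,{\rm non\text{-}iso}}$ yields \eqref{eq.thm.simple.2}, and together with $\Psi_{\lambda,{\rm iso}}=\varnothing$ this is exactly \eqref{eq.thm.simple}; Theorem \ref{thm.simple} then gives that $M_{\frakp}(\lambda)$ is irreducible.

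The main subtlety I expect is a bookkeeping one rather than a conceptual obstacle: I must make sure that the dot-action composition law $w.w'.\lambda=(ww').\lambda$, stated in the excerpt for $w,w'\in W_\fl$, is being applied legitimately — in particular that the reflection $s_\beta$ and $s_\alpha$ (the latter along a root \emph{not} in $\Delta_\fl$) can be manipulated in the ambient Weyl-type group generated by all non-isotropic root reflections, where the cocycle identity for the dot action still holds since $\rho$ is the global Weyl vector. I would also double-check that $s_\alpha(\beta)\in\Delta$ is automatic (it is, since $\alpha,\beta$ are roots and $\alpha$ is non-isotropic, so $s_\alpha$ permutes $\Delta$), so that the extra content of (M+) over (M) is precisely the membership $s_\alpha(\beta)\in\Delta_\fl$, which is exactly what upgrades "$\lambda+\rho$ lies on some wall" to "$s_\alpha.\lambda$ is fixed by an element of $W_\fl$". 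Finally I should note that no cancellation \emph{between} different summands is needed — each term is individually zero — which is why (M+) is genuinely stronger than the necessary condition (M) of Proposition \ref{prop.M}, where only the total sum is controlled.
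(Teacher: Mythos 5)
Your proposal is correct and follows essentially the same route as the paper: the identity $s_{s_\alpha(\beta)}.(s_\alpha.\lambda)=s_\alpha.s_\beta.\lambda=s_\alpha.\lambda$ together with $s_\alpha(\beta)\in\Delta_{\fl}$ shows each summand $\chi^{\frakp}(s_\alpha.\lambda)$ vanishes individually, and Theorem \ref{thm.simple} then gives irreducibility. Your extra care about the dot-action cocycle identity for reflections outside $W_{\fl}$ is a legitimate point that the paper uses implicitly; it holds because $\rho$ is the global Weyl vector.
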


\begin{proof}
Let us choose any root $\alpha\in \Psi_{\lambda, {\rm non\mathchar`-iso}}$.
Taking $\beta$ as in the condition we have 
\beq
s_{s_{\alpha}(\beta)}.( s_{\alpha}.\lambda)= 
s_{\alpha}.s_{\beta}. \lambda =s_{\alpha}.\lambda \;.
\eeq
Since $s_{\alpha} (\beta) \in \Delta_{\fl}$ from the assumption,
this means
we have an element $w(\neq e)\in W_{\fl}$ such that
 $w.(s_{\alpha}.\lambda)=s_{\alpha}.\lambda$.
Hence $\chi^{\mathfrak{p}}(s_{\alpha}.\lambda)=0$. 

Since we have shown that each summand in \eqref{eq.thm.simple} vanishes,
 the equation \eqref{eq.thm.simple} holds
 and thus thanks to Theorem \ref{thm.simple}
the parabolic Verma module $M_{\frakp}(\lambda)$ is irreducible.
\end{proof}


The conditions (M) and (M+) are simpler than the 
 conditions \eqref{eq.prop_1} or \eqref{eq.thm.simple}.
Unfortunately, (M) is necessary but not sufficient, while (M+)
 is sufficient but not necessary, for irreducibility.
Indeed, there are examples
 where (M) is satisfied but (M+) is not (\cite{Jantzen}).
However, if we impose a regularity assumption on $\lambda$
 we can present a condition which is both necessary and sufficient.


\begin{cor}\label{cor.Msharp}

Suppose that $(\lambda+\rho, \alpha)\ne 0$
 for all $\alpha\in \Delta_{\sub{0}}$.
Then the parabolic Verma module
$M_{\mathfrak{p}}(\lambda)$ is irreducible if and only if
the following condition hold:

\begin{itemize}
\item[(M++)]

$\Psi_{\lambda, {\rm non\mathchar`-iso}}=\Psi_{\lambda, {\rm iso}}=\varnothing$.

\end{itemize}

\end{cor}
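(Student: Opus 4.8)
The plan is to handle the two implications separately; both follow in a few lines from facts already established, so no new machinery is needed.

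For the ``if'' direction I would simply invoke the observation recorded at the start of Section~\ref{sec.simplicity}: $M_{\mathfrak{p}}(\lambda)$ is irreducible whenever the set $\Psi_\lambda = \Psi_{\lambda,{\rm non\mathchar`-iso}}\cup\Psi_{\lambda,{\rm iso}}$ is empty. Since condition (M++) is precisely the assertion $\Psi_\lambda=\varnothing$, irreducibility is immediate. Note that this half uses neither the regularity hypothesis nor the determinant formula beyond that single consequence.

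For the ``only if'' direction I would pass through Proposition~\ref{prop.M}. Assume $M_{\mathfrak{p}}(\lambda)$ is irreducible, so condition (M) holds. Its second clause already gives $\Psi_{\lambda,{\rm iso}}=\varnothing$. For the first clause, suppose toward a contradiction that $\Psi_{\lambda,{\rm non\mathchar`-iso}}\ne\varnothing$ and choose $\alpha$ in it; then (M) supplies $\beta\in\Delta^{i(\alpha)}\cap\Delta_{\sub{0}}$ with $(\lambda+\rho,\beta)=0$. But $\beta$ is an \emph{even} root, whereas the hypothesis of the corollary asserts $(\lambda+\rho,\gamma)\ne 0$ for every $\gamma\in\Delta_{\sub{0}}$ --- a contradiction. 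Hence $\Psi_{\lambda,{\rm non\mathchar`-iso}}=\varnothing$ as well, which is (M++).

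I do not expect any substantial obstacle here: the corollary is essentially a repackaging of Proposition~\ref{prop.M} together with the sufficiency remark, and the regularity hypothesis does exactly one job --- it rules out the existence of an even root orthogonal to $\lambda+\rho$, thereby collapsing the first clause of (M) into the statement that $\Psi_{\lambda,{\rm non\mathchar`-iso}}$ is empty. The only point worth a second glance is that the root $\beta$ delivered by (M) genuinely lies in $\Delta_{\sub{0}}$, which is built into the definition of $\Delta^{i(\alpha)}\cap\Delta_{\sub{0}}$.
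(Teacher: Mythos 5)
Your proof is correct and matches the paper's intent: the paper simply states that the corollary "follows easily from Propositions \ref{prop.M} and \ref{prop.Mp}", and your argument spells out exactly that — (M++) makes the first clause of (M+) vacuous (equivalently, $\Psi_\lambda=\varnothing$ gives irreducibility directly from the determinant formula), while the regularity hypothesis combined with Proposition \ref{prop.M} forces $\Psi_{\lambda,{\rm non\mathchar`-iso}}=\Psi_{\lambda,{\rm iso}}=\varnothing$ in the converse direction.
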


\begin{proof}
These follow easily from Propositions \ref{prop.M} and \ref{prop.Mp}.
\end{proof}


\bibliographystyle{nb}
\bibliography{bootstrap_super}

\end{document}